\documentclass[reqno,11pt]{article}
\usepackage{amssymb,amsthm,amsmath,amsfonts,mathrsfs}
\usepackage{graphicx}
\usepackage[top=1in,bottom=1in,left=0.9in,right=0.9in]{geometry}
\usepackage[latin1]{inputenc}
\usepackage[T1]{fontenc}
\usepackage{hyperref}
\usepackage[english]{babel}
\usepackage{color}
\usepackage{esint}

\newtheorem{prop}{Proposition}[section]
\newtheorem{lem}{Lemma}[section]

\newtheorem{thm}{Theorem}[section]
\newtheorem{rmq}{Remark}[section]

\numberwithin{equation}{section}

\newcommand{\be}{\begin{equation} \label}
\newcommand{\ee}{\end{equation}}
\newcommand{\R}{\mathbb{R}}
\newcommand{\N}{\mathbb{N}}
\newcommand{\eps}{\varepsilon}
\newcommand{\vap}{\varphi}

\newcommand{\cb}{\color{blue}}

\author{Loth}

\begin{document}

	\begin{center}
		\section*{Local existence and blow-up behavior for the diffusive Hamilton-Jacobi equation in a half-space with unbounded initial data}
		$ $
	\end{center}
	
	\begin{center}
		  Loth Damagui CHABI
\footnote{Université Sorbonne Paris Nord, LAGA, CNRS, UMR 7539, F-93430, Villetaneuse, France\\
{\it Email address}: chabi@math.univ-paris13.fr}
		$ $
		
	\end{center}
\begin{abstract}

We study the local existence and blow-up behavior of solutions, with possible growth at space infinity, 
for the diffusive Hamilton-Jacobi equation $u_t-\Delta u=|\nabla u|^p$ ($p>1$), in a half-space with Dirichlet boundary conditions.
Under optimal, polynomial growth assumptions on the initial data, characterized by the critical exponent $p/(p-1)$,
we first prove local existence and uniqueness of a maximal classical solution and establish a blow-up alternative.
This is complemented by a nonexistence result showing that $p/(p-1)$ is the sharp threshold for admissible growth at infinity. 
Next, for initial data with subcritical growth, we show global-in-time existence for $1<p\le2$ and that finite-time blow-up can occur only through gradient blow-up for $p>2$, 
and we derive sharp upper and lower estimates on the gradient,
including a precise type~II boundary blow-up rate and a dichotomy in its behavior near the singular time.
Finally, in the case $p>2$, for any type~II blow-up solution, we prove convergence after rescaling) to an explicit one-dimensional profile and providing a refined description of the asymptotic singularity formation.

\smallskip

\noindent{\bf Keywords.} Diffusive Hamilton-Jacobi equation, local existence, nonexistence, blow-up behavior.

\noindent{\bf MSC Classification.} 35F21, 35A01, 35K55, 35B45

\end{abstract}

\section{Introduction}
 In this paper we are concerned with 
 the diffusive Hamilton-Jacobi equation
\be{localtheoryeqE0}
u_t-\Delta u=|\nabla u|^p,
\ee
in a half-space with Dirichlet boundary conditions, where $p>1$. 

First of all, let us recall that the corresponding Cauchy-Dirichlet problem
\begin{equation}\label{localtheory01eqE1b}
	\begin{cases}
	u_t-\Delta u=|\nabla u|^p,&x\in\Omega,\ t>0,\\
	u=0,&x\in \partial\Omega,\ t>0,\\
	u(0,x)=u_0(x),&x\in \Omega,
	\end{cases}
	\end{equation}
where $\Omega\subset\R^n$ is a smooth bounded domain,
arises in stochastic control. Namely, for $u_0\in C_0(\overline\Omega)$,
problem~\eqref{localtheory01eqE1b} has a unique global viscosity solution, which gives the value function of the optimal control problem associated with the stochastic differential system $dX_s =\alpha_s ds+dW_s$, with cost function $|\alpha_s|^{p/(p-1)}$
and zero pay-off at the exit time.
Here $(W_s)_{s>0}$ is a standard Brownian motion, 
 $\alpha_s$ is the control, and $u_0$ represents the distribution of rewards
(see \cite{BaLio04} and cf.~\cite{LL, BaLio04, BarCD, FlMS, AttSou} for more details).
As another motivation,  \eqref{localtheoryeqE0} corresponds to the so-called deterministic KPZ equation
which, along with its stochastic version,
arises in a well-known model of surface growth by ballistic deposition (cf.~\cite{KPZ86, KS88}
 and see \cite{Ha1, Ha2} for far-reaching developments in the stochastic case).
Also, it can be seen as one the simplest model parabolic problems with first order nonlinearity and, from the point of view of nonlinear parabolic theory, it is thus important to understand its properties 
(cp.~for instance with the extensively studied equation with zero order nonlinearity $u_t-\Delta u= u^p$; see \cite{quittner2019superlinear}).

\vspace{2pt}

 Although \eqref{localtheory01eqE1b} admits a global viscosity solution, classical solutions may lose regularity in finite time. For smooth initial data $u_0\in C^1(\bar\Omega)$ vanishing on the boundary, there exists a unique maximal classical solution, defined on $(0,T)$. 
For $p\in(1,2]$, all solutions are global. When $p>2$,   sufficiently large nonnegative initial data lead to finite-time blow-up, characterized by the unboundedness of the gradient while the solution itself remains bounded. 
This phenomenon, known as gradient blow-up (GBU), has been extensively studied see e.g.  \cite[Section~40]{quittner2019superlinear} and references therein, including blow-up criteria, location, rates, profiles, continuation after blow-up, and related qualitative properties, within a vast literature on this equation and related models. 
Blow-up for the bounded domain case is by now rather well understood. Roughly speaking it can be always described as ''type II boundary
gradient blow-up'', and the situation is essentially similar in unbounded domains if the initial data is bounded (along with its gradient). 
Indeed,
 GBU can occur only on the boundary, see e.g. \cite{SZ}.
Moreover, it was proved in \cite{GuoHu,PS_jmpa18} that the GBU rate is always type~II, i.e. does not coincide with the one predicted by the self-similar invariance of the problem. More precisely, for positive initial data in $BC^1(\overline\Omega)$, any non-global solution satisfies
\be{localtheoryfinal3}
\|\nabla u(\cdot,t)\|_{L^\infty(\Omega)}\ge C (T-t)^{-1/(p-2)}\quad\hbox{as }\quad t\to T_{\max}.
\ee

As far as we know, the only works on problem \eqref{localtheory01eqE1b} in unbounded domains studying solutions growing at space infinity are \cite{chso2,GGK,MS1}.
The paper \cite{GGK} considered the one-dimensional
Cauchy problem ($\Omega=\R$), and proved local well-posedness for continuous initial data with an upper
polynomial bound of order $p/(p-1)$. Self-similar solutions with profiles growing at this order were
constructed in \cite{GGK} and \cite{chso2}, respectively for $\Omega=\R$ and $\Omega=\R^n_+$. 
We also refer the reader to \cite{FankKwak} for related information on self-similar solutions of \eqref{localtheoryeqE0} in the case $\Omega=(0,\infty)$.
 On the other hand, for problem \eqref{localtheory01eqE1b} with $\Omega=(0,\infty)$, a local existence-uniqueness result
in a class of initial data growing at most linearly at infinity can be found in \cite[Proposition~3.17]{MS1}.

\vspace{2pt}

 The first main goal of this paper is to address the so far unexplored question of the blow-up behavior for
equation \eqref{localtheoryeqE0} in the unbounded domain case with unbounded initial data. Our study will lead to results which
  display interesting differences with the usual cases. We
shall concentrate on the case of the simplest unbounded domain with nonempty boundary, namely the half-space (this is a natural choice, rather than the
whole space, in view of the above mentioned boundary blow-up phenomena).

In order to carry out this study, it is obviously necessary to first have a precise local well-posedness theory for unbounded initial data. This is also an interesting topic in itself, with the natural question of whether a maximal admissible growth appears. Since only partial results seem to be available in this direction, a second main goal of this paper is to fill this gap.

\section{Main results}
	 \subsection{Local well-posedness   with optimal class of initial data}
In this paper,
 for $T>0$, we denote 
$$
H:=\R^n_+,\quad Q_T:=H\times(0,T)
$$
and consider the problem
\begin{equation}\label{localtheoryeqE1b}
	\begin{cases}
	u_t-\Delta u=|\nabla u|^p,&x\in H,\ t\in(0,T),\\
	u=0,&x\in \partial H,\ t\in(0,T),\\
	u(0,x)=u_0(x),&x\in H.
	\end{cases}
	\end{equation}
 By a solution  $u$ of \eqref{localtheoryeqE1b}, we mean a function $u\in C^{2,1}(Q_T)\cap C(\overline H\times [0,T))$ which verifies \eqref{localtheoryeqE1b} in the classical sense.
  We denote 
  
 $$
  \beta=1/(p-1),\quad d_p:=\beta^{\beta},\quad c_p:=(1-\beta)^{-1}d_p\quad \hbox{ for  $p>2$}.
$$

For given $0\le \eta<R$, we also denote
$$
\Gamma_{\eta,R}:=\{x\in\R^n,\ \eta< x_n< R\}\quad \hbox{ and }\quad \Gamma_R:=\Gamma_{0,R}.
$$
Let $w=(1+x_n)^{\beta+1}$. We define
$$
C^0_\mathcal{U}(\bar H)=\Big\{v\in C(\bar H);\ v\ge 0,\ \sup_{\Gamma_r}v\to 0\ \hbox{as }r\to 0\Big\},\quad L_{w}^\infty(H)=\Big\{ v\in L_{loc}^\infty(\bar H),\ v\ge0,\ \sup_{H} \Big(\frac{v}{w}\Big)<\infty\Big\}.
$$
$C^0_{\mathcal{U}}(\overline Q_T)$ and $L_w^\infty(Q_T)$ are defined similarly with $H$, $\Gamma_r$ replaced by $Q_T$, $\Gamma_r\times(0,T)$, respectively, and we also denote 
\be{localtheorydefset}
Y_T:=C^0_\mathcal{U}(\overline Q_T)\cap L_{w}^\infty(Q_T)\cap C^{2,1}(Q_T).
\ee
Our optimal class of initial data, with possible growth at $\infty$ and singularity on the boundary, is defined by 
$$
\begin{aligned}
&X:= C^0_\mathcal{U}(\bar H)\cap L_{w}^\infty(H) \quad \hbox{if $1<p\le 2,\ $ and }\\
&X_1=\Big\{v\in X:\ \limsup_{x_n\to 0}\Big(\sup_{x'\in\R^{n-1}}x_n^{\beta-1}v(x',x_n)\Big)<c_p\Big\}\quad \hbox{if $p>2$.}
\end{aligned}
$$

Our first main theorem is the local well-posedness of \eqref{localtheoryeqE1b} for initial data in $X_1$, with the corresponding blow-up alternative.
\begin{thm}[Maximal solution for $p>2$]\label{localtheorymaxsol}
Let $p> 2$, and $u_0\in X_1$. There exist $T_{\max}:=T_{\max}(u_0)\in (0,\infty]$ and a unique $u$ such that:

\smallskip

\noindent $(i)$ For any $T\in (0,T_{\max})$,  $u\in Y_T$ is a solution of \eqref{localtheoryeqE1b}.

\smallskip

\noindent $(ii)$ For all $R>0$, we have 
\be{localtheoryconstsol2}
|\nabla u|\in L^\infty\Big(\bar \Gamma_R\times (\eps,T)\Big),\quad \hbox{ for any }\ 0<\eps<T<T_{\max}.
\ee

\noindent $(iii)$ If $T_{\max}<\infty$ then 
$$
\|u(\cdot,t)\|:=\|u(\cdot,t)\|_{L^\infty_w(H)}+\|\nabla u(\cdot,t)\|_{L^\infty(\Gamma_1)}\to \infty\quad\hbox{as }\ t\to T_{\max}.
$$
\end{thm}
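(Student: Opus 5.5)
We construct the solution by monotone approximation with explicit barriers. Uniqueness then comes from a comparison principle in the class $Y_T$, and the blow-up alternative from a restart argument.

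\textbf{Barrier.} The key object is a supersolution that captures both the growth $w=(1+x_n)^{\beta+1}$ at infinity and the admissible singular behaviour $c x_n^{1-\beta}$ near $\partial H$, for $c<c_p$.
\begin{itemize}
\item Note that $U_0(x)=c_p x_n^{1-\beta}$ is the one-dimensional singular stationary solution. Indeed, $U_0'=d_p x_n^{-\beta}$, and $U_0''=-\beta d_p x_n^{-\beta-1}=-(U_0')^p$ because $d_p^{p-1}=\beta$.
\item For $u_0\in X_1$ there exist $c<c_p$, $\delta>0$ and $M>0$ with $u_0\le c\,x_n^{1-\beta}$ on $\Gamma_\delta$ and $u_0\le Mw$ on $H$.
\item I would look for a supersolution of the form $\bar U(x,t)=\phi(x_n)+K(t)w(x)$, where $\phi$ is a concave one-dimensional stationary supersolution with $\phi(0)=0$, $\phi\ge c x_n^{1-\beta}$ near $0$ and $|\phi'|$ bounded away from $0$.
\item An alternative is $\bar U=A(t)\,\psi(x_n)$ with $\psi$ a one-dimensional stationary solution of $-\psi''=\lambda(\psi')^p$, which interpolates between $x_n^{1-\beta}$ near $0$ and $x_n^{1+\beta}$-type growth. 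One then uses scaling $u_\lambda(x,t)=\lambda^{\beta-1}u(\lambda x,\lambda^2t)$.
\item The point is that $|\nabla w|^p\sim (1+x_n)^{\beta p}=(1+x_n)^{\beta+1}$, so the growth order $\beta+1$ is exactly self-consistent. An ODE of the type $K'=C(K+K^p)$ therefore closes on a short interval $(0,T_0)$.
\end{itemize}

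\textbf{Existence.} I would first truncate $u_0$ into bounded, compactly supported $u_{0,k}\in C_0^1$ increasing to $u_0$, for instance by cutting off near $\partial H$ and at infinity. For these data the classical theory gives bounded solutions $u_k$. By comparison with $\bar U$, which is valid since $u_k$ is bounded and $\bar U$ is a supersolution, we get $u_k\le \bar U$ on $Q_{T_0}$ uniformly in $k$. Monotonicity in $k$ then gives a limit $u$.

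Regularity comes from local Bernstein-type gradient estimates away from $t=0$. Near the boundary we need the gradient bound \eqref{localtheoryconstsol2}. Here I would use that $u_k\le \bar U$ and $\bar U(\cdot,t)$ is Lipschitz at $x_n=0$ for $t>0$. This holds if the barrier is built so that the singular part is smoothed by the flow, e.g. by comparing with $\phi$ started at the shifted variable $x_n+\sigma(t)$. Combined with a Bernstein argument in $\Gamma_R\times(\eps,T)$, this gives bounds uniform in $k$. Parabolic Schauder estimates then pass to the limit. Continuity at $t=0$ and the $C^0_\mathcal U$ property follow from $u_k\le u\le \bar U$ and local continuity.

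\textbf{Uniqueness.} Given two solutions $u,v\in Y_T$, I would compare them via $z=u-v$. It satisfies $z_t-\Delta z=b\cdot\nabla z$ with $|b|\le p(|\nabla u|+|\nabla v|)^{p-1}$. The difficulty is that $b$ is unbounded near $t=0$, near $\partial H$ and at infinity. I would instead compare $u$ with $(1+\eps)v$-type or $v(\cdot+\eps e_n)$-type perturbations. A weighted maximum principle with weight $e^{-\mu t}w^{-1}$ handles the growth at infinity, and the $C^0_\mathcal U$ condition handles the corner at $t=0$.

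\textbf{Maximal solution and alternative.} Define $T_{\max}$ as the supremum of existence times. The local existence time $T_0$ depends only on $\|u_0\|_{L^\infty_w}$ and on the constants $c,\delta$. If $\|u(t)\|$ stays bounded along $t\to T_{\max}$, then $u(t)$ is Lipschitz near $\partial H$ with uniform constant, so $u(t)\in X_1$ with $c=0$-type control. The uniform existence time then allows extension, a contradiction. Upgrading $\liminf$ to $\lim$ uses the same argument started from any time.

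\textbf{Main obstacle.} I expect the hardest part to be the near-boundary gradient bound (ii) for $t>0$, uniformly in $k$, given singular data with $c<c_p$. It requires a barrier that instantly regularises the $x_n^{1-\beta}$ singularity, and the strict inequality $c<c_p$ is exactly what makes such a barrier exist.
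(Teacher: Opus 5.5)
Your architecture matches the paper's. You approximate the data and bound $u_k$ by explicit supersolutions. You control $\nabla u_k$ on $\partial H$ by a barrier in the shifted variable $x_n+\sigma(t)$; the paper uses $c_p\bigl((x_n+t^{1/(1-\beta)})^{1-\beta}-t-x_n^2/2\bigr)$. You then push this inward with a Bernstein lemma, pass to the limit by compactness, and restart from data Lipschitz on $\Gamma_1$ for the alternative, which is exactly the paper's last step. Two steps, however, do not go through as written.

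\textbf{Uniqueness.} $Y_T$ carries no gradient information, and your sketch never produces any, so the drift $b$ is uncontrolled at infinity and near $\partial H$. Your perturbations fail. If $v$ is a solution, then $(1+\eps)v$ is a \emph{sub}solution, since $L((1+\eps)v)=(1+\eps)|\nabla v|^p\le|\nabla((1+\eps)v)|^p$. And $v(\cdot+\eps e_n)$ need not lie above $u_0$ at $t=0$ unless $u_0$ is nondecreasing in $x_n$.

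The missing idea is that near $\partial H$ no equation is used at all. Since $u\in C^0_{\mathcal U}$ and the comparison function is nonnegative, $u-v\le\tilde\sigma$ on $\Gamma_{R_0}\times(0,\tau)$. Hence a positive maximum of $u-v-h_\eps$ must lie in $\{x_n>R_0\}$. There, the Bernstein estimate \eqref{localtheorygrabound} with $u\in L^\infty_w$ gives $|\nabla u|\le Ct^{-1/p}(1+x_n)^\beta$, so $|b|\lesssim t^{-(p-1)/p}(1+x_n)$.

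The penalization $h_\eps=M\theta(\eps(1+x_n))(1+x_n)^{1+\beta}e^{\mu t^{1/p}}+\eps\log(1+|x'|^2)$ absorbs the linear growth of $b$. Its factor $e^{\mu t^{1/p}}$, rather than an $e^{\mu t}$ weight, absorbs the integrable singularity $t^{-(p-1)/p}$ on a short interval, after which one continues in time. So $C^0_{\mathcal U}$ serves to keep the maximum off the whole boundary strip for all $t$, not just at the corner $t=0$.

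\textbf{Barrier.} You flag this as the main obstacle but leave it unbuilt, and some suggested forms are impossible. If $-\phi''\ge(\phi')^p$ with $\phi'>0$, then $\phi'(s)\le((p-1)s)^{-\beta}\to0$, so $|\phi'|$ cannot stay away from $0$. Likewise, $-\psi''=\lambda(\psi')^p$ has no solution with $x_n^{1+\beta}$ growth.

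With $\phi=c''x_n^{1-\beta}$ and $c<c''<c_p$, your $\phi+K(t)w$ does give a uniform upper bound, because the strict margin of $\phi$ absorbs the cross terms. But it says nothing about $\nabla u_k$ on $\partial H$, since it equals $K(t)\ge M$ there.

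The moving barrier must vanish on $\partial H$. In the paper this forces the correction $-c_pt$, compensated by $-c_px_n^2/2$, so the barrier is a supersolution only on a thin strip $\Gamma_\rho$. One then needs $u_k\le M_2\rho^{1-\beta}+Ct$ on $\{x_n=\rho\}$ with $M_2<c_p$. Your $\phi+K(t)w$ cannot supply this, since $K(t)w\ge M$ there while $(c_p-c'')\rho^{1-\beta}$ is small.

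The paper obtains it from the intermediate barrier $C_0M_0^pa^{-p}t+M_2g(x_n)$ on $\Gamma_a$, with $M_2=(M_1+c_p)/2$. Its lateral data at $x_n=a$ come from $\alpha(T_0-t)^{-\beta}(1+x_n)^{1+\beta}$. This is where $c<c_p$ really enters, both at $t=0$ and on $\{x_n=\rho\}$. The resulting explicit lower bound on $T_k$ in terms of $M_0$, $a$ and $c_p-M_1$ is what rules out gradient blow-up of $u_k$ before a common time, and it makes your restart argument uniform.
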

Assertion $(iii)$ means that, if $u$ is nonglobal, then either $u$ blows up in the $L^\infty_w$ norm,
or else GBU occurs at finite distance from the boundary. 
Concerning the case $p\in (1,2]$, we obtain an analogue of the above properties for {  the} larger class of initial data { $X$}.

\begin{thm}[Maximal solution  for $1<p\le2$]\label{localtheorymaxsol1p2}
Let $1<p\le 2$, and $u_0\in X$ with $u_0\ge0$. There exist $T_{\max}:=T_{\max}(u_0)\in (0,\infty]$ and a unique $u$ such that assertions $(i)$ and $(ii)$ are valid. Moreover
 if $T_{\max}<\infty$ then 
$$
\|u(\cdot,t)\|_{L^\infty_w( H)}\to \infty\quad\hbox{as }\ t\to T_{\max}.
$$
\end{thm}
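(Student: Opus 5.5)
The plan is to build the solution by approximation with bounded, smooth data, control the approximations with barriers in the weighted space, and then pass to the limit. For $1<p\le 2$ the barriers are simpler because no boundary singularity threshold enters.

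\emph{Approximating problems and comparison.} Take $u_0\in X$ with $u_0\ge0$. Truncate and mollify to get data $u_{0,k}\in BC^1(\bar H)$ that vanish on $\partial H$, satisfy $0\le u_{0,k}\le u_{0,k+1}$, and increase to $u_0$ locally uniformly on $\bar H$. One way is $u_{0,k}=\min(u_0,k)$ multiplied by a boundary cutoff $\min(1,k x_n)$ and then suitably regularized. For $p\le2$ the classical bounded theory (recalled in the introduction: solutions with bounded $C^1$ data in unbounded domains are global when $p\le 2$) gives global classical solutions $u_k$. The comparison principle for bounded solutions then shows that $(u_k)$ is nondecreasing.

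\emph{Uniform weighted bound.} Write $M:=\sup_H u_0/w$. I will look for a supersolution of the form $\bar u=A(t)\,w(x_n)+B(t)$, with $A(0)\ge M$ and $A,B$ solving an ODE. Here $w'\sim(1+x_n)^{\beta}$ and $w''\sim(1+x_n)^{\beta-1}$, so
$$|\nabla \bar u|^p= A^p(\beta+1)^p(1+x_n)^{\beta p}=A^p(\beta+1)^p(1+x_n)^{\beta+1}=A^p(\beta+1)^p\,w ,$$
because $\beta p=\beta+1$. The condition $\bar u_t-\Delta\bar u\ge|\nabla\bar u|^p$ therefore reduces to $A'w+B'\ge A^p(\beta+1)^p w+A(\beta+1)\beta(1+x_n)^{\beta-1}$. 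The last term is at most $A(\beta+1)\beta$ when $\beta\le1$, which is exactly the case $p\ge2$. For $p<2$ it is $\le A(\beta+1)\beta\, w$ since $1+x_n\ge1$. Both terms are thus absorbed by choosing $A'=C(A^p+A)$ and $B'=CA$. This gives a common time $T_0(M)>0$ on which $u_k\le \bar u$. The Dirichlet condition is harmless here because $\bar u\ge0$ on $\partial H$.

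Because $\bar u$ does not vanish on the boundary, I need a separate boundary barrier for the uniform convergence to $0$ near $\partial H$ (the $C^0_\mathcal U$ condition). I would use the solution of the heat equation with data $\min(u_0,\bar u)$, plus a correction of the form $\phi(x_n)$ with $\phi$ concave and $\phi(0)=0$. Alternatively, since $p\le2$, I can use the local gradient bounds below together with a linear-in-$x_n$ barrier on $\Gamma_1$.

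\emph{Local gradient estimates and passage to the limit.} This is the main obstacle. For $p\le 2$, Bernstein-type estimates on cylinders $\Gamma_R\times(\eps,T)$ bound $|\nabla u_k|$ in terms of the $L^\infty$ bound of $u_k$ on a slightly larger cylinder. I would take $z=|\nabla u|^2$ with a cutoff in $(x,t)$; the subquadratic growth is what allows closing with only $\|u\|_\infty$. Near $\partial H$ I would use the boundary version, with barriers of the form $\pm(Kx_n-Lx_n^2)$ on $\Gamma_\delta$ for $t\ge\eps$, to bound the normal derivative. This yields (ii) uniformly in $k$. Parabolic Schauder estimates then give $C^{2+\alpha,1+\alpha/2}_{loc}$ bounds, so the monotone limit $u$ is a classical solution in $Y_T$. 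Continuity at $t=0$ comes from $u_{0,k}\le u\le$ barrier combined with monotonicity.

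\emph{Uniqueness, maximality, blow-up alternative.} For uniqueness, let $u,v\in Y_T$ be two solutions. I would apply a weighted comparison to $z=u-v$, which satisfies a linear equation $z_t-\Delta z=b\cdot\nabla z$ with $|b|\le p(|\nabla u|+|\nabla v|)^{p-1}$. The gradients are only locally bounded, so I would argue on $\Gamma_R\times(\eps,T)$ and then let $\eps\to0$ using the $C^0_\mathcal U$ property and continuity at $t=0$. Growth at infinity is controlled by a weight such as $e^{Ct}(1+|x|^2)^{\gamma}$, using the a priori growth $|\nabla u|\lesssim (1+x_n)^{\beta}$ obtained from rescaled local estimates. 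Given uniqueness, the maximal solution is obtained by restarting from $u(\cdot,t_0)\in X$: the existence time $T_0$ depends only on $\|u(t_0)\|_{L^\infty_w}$. Consequently, if $T_{\max}<\infty$ then $\|u(t)\|_{L^\infty_w}\to\infty$. If it did not, there would be times $t_j\to T_{\max}$ with bounded norm, and restarting from $u(t_j)$ would extend $u$ beyond $T_{\max}$.
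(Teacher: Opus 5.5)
Your overall plan matches the paper's: regularized data, a weighted supersolution using $\beta p=\beta+1$, uniform gradient bounds and compactness, comparison for uniqueness, and a restart argument whose existence time depends only on $\|u(t_0)\|_{L^\infty_w}$. Your weighted bound and the blow-up alternative are fine.

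The genuine gap is in the step you flag as the main obstacle: the gradient bound up to $\partial H$, uniform in $k$, which is what gives (ii) and compactness near the boundary.

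\begin{itemize}
\item The barriers $\pm(Kx_n-Lx_n^2)$ on $\Gamma_\delta\times(\eps,T)$ can be compared with $u_k$ only if $u_k(\cdot,\eps)\le Kx_n-Lx_n^2$ on $\Gamma_\delta$. That means you must already know a Lipschitz bound at $\partial H$ at time $\eps$, uniformly in $k$, which is circular.
\item Starting the barriers at $t=0$ fails too. Here $u_0\in X$ only satisfies $\sup_{\Gamma_r}u_0\to0$ (e.g.\ $u_0\sim x_n^{1/2}$ near $\partial H$), so the $u_{0,k}$ are not uniformly $O(x_n)$.
\item For $p=2$ there is a further obstruction. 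The supersolution condition forces $2L\ge K^2$, hence $K\delta-L\delta^2\le 1/2$. So the lateral comparison at $x_n=\delta$ also needs prior uniform smallness of $u_k$ near $\partial H$.
\end{itemize}

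The missing idea is a \emph{time-dependent} barrier that vanishes on $\partial H$, dominates merely continuous data at $t=0$, and exploits parabolic smoothing. The paper (Step~5 in Section~\ref{localtheoryExistencesection}) takes $v=\log(1+U(x_n,t))$. Here $U$ solves a one-dimensional inhomogeneous Dirichlet heat problem on $(0,1)$ with constant data depending only on $M_0$. Heat-kernel estimates give $u_k\le C(M_0)\,t^{-1/2}x_n$ on $\Gamma_1$ for small $t$, hence $|\partial_{x_n}u_k(x',0,t)|\le Ct^{-1/2}$. Lemma~\ref{localtheoryBernchso} is a Bernstein estimate up to the flat boundary that needs exactly such a boundary bound $\tilde M t^{-\gamma_0}$ as input. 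It then yields $|\nabla u_k|\le C(1+t^{-1/p})$ near $\partial H$, uniformly in $k$.

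Your uniqueness sketch also does not close as written.
\begin{itemize}
\item A general solution in $Y_T$ has no gradient control near $\partial H$. Even if you use (ii), the bound on $\bar\Gamma_R\times(\eps,T)$ is not quantified as $\eps\to0$. So the Gronwall factor on $(\eps,T)$ may blow up faster than $\sup(u-v)^+(\cdot,\eps)/\Phi$ decays.
\item The interior bound available from the $L^\infty_w$ norm alone is $|\nabla u|\le Ct^{-1/p}(1+x_n)^\beta$, not $C(1+x_n)^\beta$. To let the initial time go to $0$ you need a weight like $e^{\mu t^{1/p}}$ rather than $e^{Ct}$.
\end{itemize}

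The paper's Proposition~\ref{localtheoryuniqueness} avoids both problems. Since $u\in C^0_{\mathcal U}$ and $v\ge0$, $u-v$ is small on $\Gamma_{R_0}$, so a positive maximum of $u-v-h_\eps$ lies in $\{x_n>R_0\}$, where the Bernstein bound holds. The resulting drift $O(t^{-(p-1)/p}(1+x_n))$ is integrable in time and is absorbed by the factor $e^{\mu t^{1/p}}$ in $h_\eps$. No bound on $\nabla v$ is needed, because $\nabla v=\nabla u-\nabla h_\eps$ at the maximum point.

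Finally, your global barrier $A(t)w+B(t)$ only gives $\limsup_{t\to0}u\le Mw$. For continuity at $t=0$ you need local barriers around each $x_0$, as the paper's $W_1$.
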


The uniqueness part of Theorems~\ref{localtheorymaxsol}-\ref{localtheorymaxsol1p2} is a consequence of the following result which yields local uniqueness {  for} initial data {  in $X$}.

\begin{prop}[Local uniqueness]\label{localtheoryuni}
 Let $p>1$ and  $u_0\in X$. For any $T>0$, problem \eqref{localtheoryeqE1b} admits at most one solution in $Y_T$. 
\end{prop}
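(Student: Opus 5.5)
Let $u,v\in Y_T$ be two solutions of \eqref{localtheoryeqE1b} with the same initial data $u_0\in X$. The plan is to show $u\le v$ by a weighted comparison argument; exchanging the roles of $u$ and $v$ then gives $u=v$. The main difficulty is that nothing is known a priori about $\nabla u$ or $\nabla v$ up to $t=0$, near $\partial H$, or at infinity. So the linearization
\[
z_t-\Delta z = b\cdot\nabla z,\qquad z:=u-v,\qquad b:=\frac{|\nabla u|^p-|\nabla v|^p}{|\nabla(u-v)|^2}\,\nabla(u-v),
\]
has a drift $b$ that may be unbounded, which rules out the standard Phragmén--Lindelöf principle. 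I would therefore avoid working with $z$ directly and instead exploit the convexity of $\xi\mapsto|\xi|^p$.

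\textbf{Step 1 (convexity trick).} For $\theta\in(0,1)$, set $w_\theta:=\theta u$. Convexity gives $|\nabla (\theta u)|^p=\theta^p|\nabla u|^p\le\theta|\nabla u|^p$, so $w_\theta$ is a subsolution:
\[
\partial_t w_\theta-\Delta w_\theta-|\nabla w_\theta|^p\le 0 .
\]
The aim is to prove $\theta u\le v$ for every $\theta<1$ and then let $\theta\to1$. Write $\psi:=\theta u-v$. Using the convexity inequality $|a|^p-|b|^p\le p|a|^{p-2}a\cdot(a-b)$ with $a=\nabla(\theta u)$ and $b=\nabla v$, $\psi$ satisfies
\[
\psi_t-\Delta\psi\le p|\nabla(\theta u)|^{p-2}\nabla(\theta u)\cdot\nabla\psi .
\]
This still has an unbounded drift, but now only $\nabla u$ enters, and the factor $\theta<1$ leaves room to spare: in the region where $\psi>0$ we have $\theta u>v\ge0$.

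\textbf{Step 2 (cut off where gradients are controlled).} Work on $\{\psi>\eps\}$ for small $\eps>0$, restricted to $t\in[\tau,T']$ with $T'<T$ and $\tau>0$, and to $x_n\ge\delta$. Use the hypotheses $u,v\in C^0_\mathcal U$ together with continuity at $t=0$ and on $\partial H$. Both $u$ and $v$ are small near $\partial H$ uniformly in $x'$ and $t$, so $\psi\le\eps$ on $\{x_n\le\delta\}$ once $\delta=\delta(\eps)$ is small. Likewise $\psi(\cdot,\tau)\le(\theta-1)u_0+o(1)\le\eps$ for $\tau$ small, at least locally uniformly; at infinity this has to be handled by the weight in Step 3. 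On the region $\{\delta\le x_n\}\times[\tau,T']$, interior parabolic estimates give local gradient bounds, but these are not uniform as $|x'|\to\infty$. This is where the growth in $x$ must be absorbed.

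\textbf{Step 3 (weighted maximum principle).} To control behaviour at infinity, compare $\psi$ with a barrier of the form $\eps+\mu e^{Kt}(1+|x|^2)^{m}$, where $m$ is chosen larger than the growth allowed by $L^\infty_w$, that is $2m>\beta+1$. Since $u,v\in L^\infty_w$, $\psi-\mu e^{Kt}(1+|x|^2)^m<0$ outside a large compact set. So if a positive maximum exists, it is attained at an interior point. There $\nabla\psi=\nabla(\mu e^{Kt}(1+|x|^2)^m)$, and the drift term must be bounded using the local gradient bound on that compact set. The catch is that the compact set depends on $\mu$, so the gradient bound is not uniform.

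\textbf{Main obstacle.} The key difficulty is exactly this uniformity of the gradient bound. I would try first to derive a Bernstein-type estimate $|\nabla u|\le C(\delta,\tau)(1+|x|)^{\beta}$ on $\{x_n\ge\delta\}\times[\tau,T']$, using the interior gradient estimates for \eqref{localtheoryeqE0} that depend only on $\sup u$ in a unit parabolic cylinder, together with $u\le Cw$. With this bound, the drift contributes at most $C(1+|x|)^{\beta(p-1)}=C(1+|x|)$ times $|\nabla\text{barrier}|/\text{barrier}\sim|x|^{-1}$. That is bounded, and can be absorbed by choosing $K$ large. Hence $\psi\le\eps+\mu e^{Kt}(1+|x|^2)^m$; letting $\mu\to0$, then $\eps,\tau\to0$ and $\theta\to1$, we obtain $u\le v$.
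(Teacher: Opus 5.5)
Your proposal has a genuine gap in Steps~2--3, together with a harmless sign error in Step~1.

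\textbf{The sign error in Step 1.} For $\theta<1$ one has $\partial_t(\theta u)-\Delta(\theta u)-|\nabla(\theta u)|^p=(\theta-\theta^p)|\nabla u|^p\ge 0$. So $\theta u$ is a \emph{super}solution, and the differential inequality you write for $\psi=\theta u-v$ does not follow. You do not need $\theta$ at all. Convexity applied directly to $z=u-v$ gives $z_t-\Delta z=|\nabla u|^p-|\nabla v|^p\le p|\nabla u|^{p-2}\nabla u\cdot\nabla z$, a drift involving only $\nabla u$. The paper gets the same effect from $|\nabla v|\le|\nabla u|+|\nabla h_\eps|$ at the maximum point.

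\textbf{The gap: starting at $t=\tau>0$ breaks the order of limits.} To have $\psi(\cdot,\tau)\le\eps+\mu e^{K\tau}(1+|x|^2)^m$, you need $\psi(\cdot,\tau)\le\eps$ on a ball whose radius tends to $\infty$ as $\mu\to0$. Since $u(\cdot,t)\to u_0$ only locally uniformly, $\tau$ must depend on $\mu$. But $K$ must dominate the drift, and Theorem~\ref{localtheorypropBern} only gives $|\nabla u|\lesssim t^{-1/p}(1+x_n)^\beta$. Hence $K\gtrsim\tau^{-(p-1)/p}$ also depends on $\mu$, and nothing forces $\mu e^{K(\mu)t}\to0$. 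If instead you fix $\tau$ and let $\mu\to0$, as you propose, you need $\sup_x\psi(x,\tau)\le\eps$, which is essentially the conclusion. Making the gradient bound uniform in $x$ has only moved the non-uniformity to the initial time.

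\textbf{The missing idea.} Compare from $t=0$, where $u(\cdot,0)=v(\cdot,0)$ holds on all of $H$, and absorb the time singularity of the drift with a factor $e^{Kt^{1/p}}$. Concretely, on $\{x_n\ge\delta\}$ apply Theorem~\ref{localtheorypropBern} on balls of radius $x_n/2$. Unit balls would only give $|\nabla u|\lesssim(1+x_n)^{1+\beta}$, which makes the drift term unbounded. With radius $x_n/2$ and $\Psi=\mu e^{Kt^{1/p}}(1+|x|^2)^m$, you get $p|\nabla u|^{p-1}|\nabla\Psi|\le C(\delta,T')\,t^{-(p-1)/p}\Psi$ and $\Delta\Psi\le C\Psi$, while $\partial_t\Psi=\frac{K}{p}t^{(1-p)/p}\Psi$. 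So $K=K(\delta,T')$ can be fixed independently of $\mu$, and letting $\mu\to0$, then $\eps\to0$, closes the argument.

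\textbf{Relation to the paper.} This is exactly the role of $e^{\mu t^{1/p}}$ in the paper's penalization $h_\eps=M\theta(\eps(1+x_n))(1+x_n)^{1+\beta}e^{\mu t^{1/p}}+\eps\log(1+|x'|^2)$. There the paper's $\mu$ plays your $K$, and its $\eps$ plays your $\mu$. The difference is that the paper keeps a $|\nabla h_\eps|^p$ term, which forces it to work on a short interval and then continue. Your convex linearization would allow a single comparison on $[0,T']$.
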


Let us next examine the sharpness of the class $X_1$ in Theorem~\ref{localtheorymaxsol}. We first note that $X_1$ is optimal in terms of boundary behavior. Indeed, for $u_0(x)=c_px_n^{1-\beta}$, we see that $u(x,t)=u_0(x)$ solves \eqref{localtheoryeqE1b}. Moreover, it is the only solution in view of Proposition~\ref{localtheoryuni}, so that \eqref{localtheoryeqE1b} has no solution belonging to $C^1(\bar  H)$ for any $t>0$, hence \eqref{localtheoryconstsol2} cannot hold.\\
 Our next theorem is a local nonexistence result, which shows that $X_1$ and $X$ are sharp in terms of behavior at infinity.

\begin{thm}[Non-existence]\label{localtheorynonexistence}
 Let $p>1$ and $u_0\in C(\bar  H)$ be such that $x_n^{-\beta-1} u_0(x)\to \infty$ as $x_n\to \infty$ uniformly with respect to $x'\in \R^{n-1}$, and let $T>0$.
Then there is no nonnegative classical solution to problem \eqref{localtheoryeqE1b} in the class $C^{2,1}(Q_T) \cap C(\bar Q_T)$.
\end{thm}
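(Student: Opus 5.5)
Suppose, for contradiction, that $u\in C^{2,1}(Q_T)\cap C(\bar Q_T)$ is a nonnegative classical solution. The plan is to test the equation against a localized eigenfunction-type weight far out in the $x_n$ direction. This yields an ODE inequality for a weighted average $y(t)$ which forces blow-up in a time that tends to $0$ as the localization moves to infinity. Since $u$ exists on $(0,T)$ with $T$ fixed, this is a contradiction.

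\textbf{Setup.} Fix a large parameter $R$. Let $\varphi_R$ be the first Dirichlet eigenfunction (normalized with $\int\varphi_R=1$) of a cube or ball $B_R$ of radius $R$. We center $B_R$ at a point with $x_n=2R$, so that $B_R\subset\{R<x_n<3R\}$, and we have $\lambda_R=\lambda_1 R^{-2}$. Set $y(t)=\int_{B_R} u(x,t)\varphi_R(x)\,dx$; this is finite and continuous on $[0,T)$ since $u\in C(\bar Q_T)$.

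\textbf{Differential inequality.} Multiply the equation by $\varphi_R$ and integrate by parts on $B_R$. Since $u\ge0$ and $\partial_\nu\varphi_R\le0$ on $\partial B_R$, the boundary term has a good sign, giving
$$
y'\ge -\lambda_R y+\int_{B_R}|\nabla u|^p\varphi_R .
$$
The main obstacle is the gradient term: we need to bound $\int|\nabla u|^p\varphi_R$ from below by $y$. I would use the integration by parts $\int u(-\Delta\varphi_R)=\lambda_R y$ in the form $\lambda_R y=\int \nabla u\cdot\nabla\varphi_R$ (valid because $u\ge0$ and $\varphi_R=0$ on $\partial B_R$; the boundary term again has a good sign). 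H\"older then gives
$$
\lambda_R y\le \Big(\int|\nabla u|^p\varphi_R\Big)^{1/p}\Big(\int|\nabla\varphi_R|^{p'}\varphi_R^{1-p'}\Big)^{1/p'} .
$$
The last integral is finite only if $p'<2$, that is $p>2$. For general $p$, I would instead replace $\varphi_R$ by $\psi=\varphi_R^m$ with $m$ large, as in the standard Mitidieri--Pohozaev test-function method; there $\int|\nabla\psi|^{p'}\psi^{1-p'}\le CR^{n-p'}$ times the scaling of the normalization. With $\int\psi=1$, scaling gives $\int|\nabla\psi|^{p'}\psi^{1-p'}=CR^{-p'}$ and $|\Delta\psi|\le C R^{-2}$ (up to a good-sign term). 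So, modulo handling $-\Delta\psi$ (for which I would use $-\Delta\psi\le C R^{-2}\varphi_R^{m-1}$, or a cutoff together with Young's inequality), we obtain
$$
y'\ge -CR^{-2}y+cR^{p}y^p .
$$
The precise absorption of the linear term via Young's inequality, so that the constants scale correctly, is where I expect the technical work to lie.

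\textbf{Conclusion.} By hypothesis, $u_0\ge K(R)R^{\beta+1}$ on $\{x_n>R\}$ with $K(R)\to\infty$, hence $y(0)\ge K R^{\beta+1}$. The reaction term dominates the linear one as soon as $R^p y^{p-1}\gg R^{-2}$, i.e.\ $y\gg R^{-(p+2)/(p-1)}$, which certainly holds for large $R$. Then $y'\ge \tfrac c2R^py^p$, so $y$ blows up before the time
$$
t_R\le C R^{-p}y(0)^{1-p}\le C R^{-p}K^{1-p}R^{-(\beta+1)(p-1)}=CK^{1-p}R^{-2p},
$$
using $(\beta+1)(p-1)=p$. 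We have $t_R\to0$ as $R\to\infty$, which contradicts the finiteness of $y(t)$ on $[0,T)$. Actually the argument shows even more: mere growth of order $R^{\beta+1}$ with a large constant suffices, so the stronger hypothesis $K\to\infty$ is used only to make $t_R<T$ for $R$ large.
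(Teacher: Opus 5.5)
There is a genuine gap, and it is exactly at the step you call technical. On a ball away from $\partial H$, you cannot bound $\int|\nabla u|^p\varphi_R$ from below by a power of $y$. Integrating by parts,
\[
\lambda_R\, y=\int_{B_R}u\,(-\Delta\varphi_R)=\int_{B_R}\nabla u\cdot\nabla\varphi_R+\int_{\partial B_R}u\,|\partial_\nu\varphi_R| .
\]
The boundary term is nonnegative but sits on the wrong side, and nothing controls it by the gradient. With $\psi=\varphi_R^m$ the boundary term disappears, but then $-\Delta\psi=m\lambda_R\varphi_R^m-m(m-1)\varphi_R^{m-2}|\nabla\varphi_R|^2$, and its negative part plays the same role.

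No choice of constants or use of Young's inequality repairs this. Your derivation uses only the equation in $B_R$ and $u\ge 0$. The constant $u\equiv K$ satisfies both, with $y\equiv K$ and $y'=0$, while the right-hand side of $y'\ge -CR^{-2}y+c(R)y^p$ is positive for $K$ large. The equation is invariant under $u\mapsto u+c$, so the size of $u_0$ on a ball in $\{R<x_n<3R\}$ carries no blow-up information. What drives nonexistence is the superlinear growth of $u_0$ relative to the values of $u$ (nonnegative, or zero) on a hyperplane at distance comparable to $x_n$, and your test function never sees such a hyperplane.

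Even formally the scaling is off. $\lambda_R\sim R^{-2}$ and $\int|\nabla\psi|^{p'}\psi^{1-p'}\sim R^{-p'}$ give $cR^{-p}y^p$, not $cR^{p}y^p$, so the blow-up time is of order $K^{1-p}$, independent of $R$. Your closing claim that a fixed constant $K$ suffices would contradict Theorems~\ref{localtheorymaxsol}--\ref{localtheorymaxsol1p2}, which give local existence for $u_0=Kx_n^{\beta+1}$.

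The paper brings in the hyperplane by comparison instead.
\begin{itemize}
\item With $\tau=T/2$ it takes the backward self-similar solution $v(x,t)=(\tau-t)^\gamma\phi(x_n/\sqrt{\tau-t})$ of \cite{chso2}. This $v$ vanishes on $\partial H$, satisfies $v(\cdot,0)\le C(1+x_n)^{\beta+1}$, and blows up at every point as $t\to\tau$.
\item The growth hypothesis gives $R_0$ with $u_0(x+R_0e_n)\ge v(x,0)$.
\item Proposition~\ref{localtheoryuniqueness} is applied with $v$ in the role of the controlled-growth solution and $\tilde u(x,t)=u(x+R_0e_n,t)\ge 0$ as supersolution. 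This gives $v\le\tilde u$, which is incompatible with $\tilde u\in C(\bar Q_\tau)$.
\end{itemize}

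If you want to keep a Kaplan-type argument, anchor the test function on $\partial H$, e.g.\ $\varphi(x)=\psi(x'/\lambda)\chi(x_n/\lambda)$ on $B'_{3\lambda}\times(0,3\lambda)$. Then a weighted Poincar\'e inequality exploiting $u=0$ at $x_n=0$ (Lemma~\ref{localtheorygenlem}) closes $y'\ge cy^p-C$; this is Lemma~\ref{localtheory1L3}. After the scaling $u_\lambda=\lambda^{\beta-1}u(\lambda x,\lambda^2t)$ and letting $t\to 0$, it gives $cK(\lambda)\le C(\lambda^{-2\beta}+T^{-\beta})$, contradicting $K(\lambda)\to\infty$. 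That route needs Bernstein control of $\nabla u$ near $x_n=0$ and, as carried out in the paper, $p>2$; the comparison argument avoids both.
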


\begin{rmq}\label{localtheoryrem1} 
$(i)$ We note that our results significantly extend the classical well-posedness framework. In the case of unbounded domain $ H$ classical theory assumes  $u_0\in BC^1(\bar  H)$, namely boundedness of both $u_0$ and $|\nabla u_0|$. Here our theorem instead allows $u_0(x)\lesssim  x_n^{1-\beta}$ near $x_n=0$ and $u_0(x)\lesssim  x_n^{1+\beta}$ at $x_n\to \infty$ and this is optimal.

 \noindent $(ii)$  We have restricted our study to nonnegative initial data and solutions, 
for simplicity and since this represents the main issue, in view of the sign of the nonlinear term. 
However this restriction could be relaxed by simple modifications.

\noindent $(iii)$ 
For $p>2$, similar to the bounded domain case, we have $T_{\max}(u_0)<\infty$ for sufficiently large initial data. For instance, for any $\phi\in X_1$, this holds for $u_0=\lambda \phi$ and $\lambda$ large, as a consequence of blow-up results in bounded domains and comparison arguments (see e.g. the proof of \cite[Lemma~2.3]{LS10}).
\end{rmq}

\subsection{Blow-up behavior}
We now turn to the description of the blow-up behavior of solutions.
 In view of the scaling property of the equation, we say that a nonglobal solution of \eqref{localtheoryeqE0} is of type~I if, for any $R>0$, 
$$
 \limsup_{t\to T_{\max} }\ (T_{\max}-t)^{\frac{1}{p-1}}\| \nabla u(\cdot,t)\|_{L^\infty(\Gamma_R)}<\infty.
$$
On the contrary, it is said to be of type II, if there exists $R_0>0$ such that 
$$
 \limsup_{t\to T_{\max}}\  (T_{\max}-t)^{\frac{1}{p-1}}\| \nabla u(\cdot,t)\|_{L^\infty(\Gamma_{R_0})}=\infty.
$$
We note that for $u_0\in X_1$ 
 with critical growth
 $u_0\simeq C x_n^{\beta+1}$ as $x_n\to \infty$, Theorem~4 in \cite{chso2} gives an example of a {  type~I} $L^\infty$  blow-up solution such that the solution (as well as its gradient) blows up at every point in $ H$. Namely, it provides a self-similar solution 
\be{localtheory1dsol} 
v(x,t)=(T-t)^\gamma\phi(x_n/\sqrt{T-t}) \quad \hbox{in }\quad (-\infty,T),
\ee
 hence $v(x,0)=T^\gamma \phi(x_n/\sqrt{T})$, where $\gamma=\frac{p-2}{2(p-1)}$ and the profile $\phi\in C^2([0,\infty))$ satisfies $\phi(0)=0$, $\phi'>0$ on $[0,\infty)$ and 
 $\displaystyle\lim_{y\to\infty} \frac{\phi(y)}{y^{\beta+1}}=L>0$.
 Moreover, for each $x\in  H$,
 \be{localtheoryexam}
 \begin{cases}
 v(x,t)\sim L(T-t)^{-\beta}x_n^{\beta+1},
 
 \vspace{.2cm}\\
 
 |\nabla v(x,t)|\sim L(\beta+1)(T-t)^{-\beta}x_n^{\beta},
 \end{cases}\quad\hbox{as }\ t\to T.
 \ee
In the subcritical growth case, i.e. when $u_0\in X_1$ satifies $u_0\lesssim x_n^\gamma$ for $x_n\gg 1$ with $\gamma<\beta+1$, we obtain a refined description of the solution and its gradient which exhibits a completely different behavior.
The following theorem in particular shows that the corresponding solution remains bounded on any strip $\Gamma_R\times[0,T)$ and that finite-time blow-up necessarily leads to type~II boundary gradient blow-up (GBU) on $\Gamma_R$. Moreover, we derive upper and lower bounds for the GBU.

 \begin{thm}\label{localtheoryonlyGBUpossible}
 Let $p>1$, $\gamma\in (0,\beta+1)$, let $u_0\in X$ satisfy
 \be{localtheoryGBUassump}
 0\le u_0\le M(1+x_n)^\gamma\quad\hbox{for some $M>0$},
\ee
 let $u$ be the maximal solution of \eqref{localtheoryeqE1b} and set $T=T_{\max}(u_0)$.

\noindent $(i)$ 
Then there exists a constant $C_1:=C_1(M,p)>0$  
such that: 
 \be{localtheoryC1-1double}
 u(x,t)\le  C_1(t+1)^q(1+x_n)^\gamma,\quad (x,t)\in  H\times[0,T),
\ee
where $q=(p+\gamma)/(p-\gamma(p-1))>0$.
In particular, if $p\in(1,2]$, then $T=\infty$.

\vspace{4pt}

\noindent $(ii)$ If $p>2$ and $T<\infty$, then there exists a constant $C_2>0$, depending on $u_0$ through $M$, $\gamma$ and $T$, such that
\be{localtheorygradlowerbound}
 \sup_{\Gamma_1}|\nabla u(x,t)|\ge C_2(T-t)^{-1/(p-2)},\quad t\in (T/2,T)
 \ee  
 and, for any $\eps>0$, there exists $C_\eps>0$ such that
 \be{localtheoryuppergradbound}
 |\nabla u(x,t)|\le (1+\eps)d_p x_n^{-\beta}+C_\eps, \quad (x,t)\in \Gamma_1\times [T/2,T).
 \ee
 \end{thm}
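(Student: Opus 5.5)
For (i), I will build an explicit supersolution of the separated form $\bar u(x,t)=A(t)(1+x_n)^\gamma$ and conclude with a comparison principle in $Y_T$. The comparison principle is the one behind Proposition~\ref{localtheoryuni}, applied to a sub/supersolution pair with growth at most $w$. Writing $y=1+x_n$, we have
$$\bar u_t-\Delta\bar u-|\nabla \bar u|^p = A'y^\gamma-A\gamma(\gamma-1)y^{\gamma-2}-(A\gamma)^p y^{p(\gamma-1)}.$$
The middle term has a good sign when $\gamma\le1$. When $\gamma\in(1,\beta+1)$ we have $y^{\gamma-2}\le y^\gamma$, so it can be absorbed into $A'y^\gamma$. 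The term $y^{p(\gamma-1)}$ is the delicate one. Because $\gamma<\beta+1$, we have $p(\gamma-1)<\gamma$, so $y^{p(\gamma-1)}\le y^\gamma$ for $y\ge1$.

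This suggests taking $A$ to solve $A'=C(A+A^p)$, but that ODE blows up in finite time, which is not what is claimed. The polynomial rate $(t+1)^q$ instead reflects the self-similar scaling. So I would use the rescaled profile
$$\bar u=(t+1)^{q}\,\Phi\bigl(y/(t+1)^{\alpha}\bigr),$$
with the exponents matched so that time derivative, diffusion and nonlinearity balance. The condition $\gamma<\beta+1$ is exactly what makes $q>0$ finite. Concretely, I will try $\bar u=K(t+1)^q y^\gamma + K'(t+1)^{q'}$ and verify the inequality separately in the regions $y\le (t+1)^{\alpha}$ and $y\ge(t+1)^{\alpha}$. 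In the first region diffusion and the constant dominate; in the second the factor $A'y^\gamma$ beats $(A\gamma)^p y^{p(\gamma-1)}$, since $y^{\gamma-p(\gamma-1)}\ge (t+1)^{\alpha(\gamma-p(\gamma-1))}$ compensates $A^{p-1}$. Choosing $\alpha$ and $q$ so that this compensation works yields $q=(p+\gamma)/(p-\gamma(p-1))$. The boundary condition is handled by replacing $\bar u$ by $\min(\bar u,\cdot)$ near $x_n=0$, or simply by noting $\bar u\ge0=u$ on $\partial H$.

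Once \eqref{localtheoryC1-1double} holds, $u$ stays bounded in $L^\infty_w$ on bounded time intervals. For $1<p\le2$, Theorem~\ref{localtheorymaxsol1p2} then gives $T=\infty$.

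For (ii), with $p>2$ and $T<\infty$, estimate \eqref{localtheoryC1-1double} gives $u\le C$ on $\Gamma_R\times[0,T)$ for every $R$. I will prove \eqref{localtheoryuppergradbound} by a Bernstein-type argument on $z=|\nabla u|^2$, or equivalently by comparison for $u_{x_n}$ and the tangential derivatives. The barrier is $\psi(x_n)=(1+\eps)c_p x_n^{1-\beta}+Cx_n$, which is a supersolution of the stationary equation near the boundary because $\psi'=(1+\eps)d_px_n^{-\beta}+C$. I compare $u(x',x_n+h,t)-u(x',h,t)$-type quotients with $\psi$ on $\Gamma_{1}\times[T/2,T)$, using the boundedness of $u$ on $\Gamma_2$ to control the lateral boundary $x_n=1$. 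The gradient at $t=T/2$ is bounded by Theorem~\ref{localtheorymaxsol}(ii). This gives $u\le\psi$, which bounds the normal derivative at the boundary; the interior gradient bound then follows from a Bernstein estimate on $|\nabla u|^2$ with a cutoff. Tangential translation invariance of $\Gamma_1$ together with uniform bounds in $x'$ keep everything uniform.

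For the lower bound \eqref{localtheorygradlowerbound}, I follow \cite{GuoHu,PS_jmpa18}. Set $m(t)=\sup_{\Gamma_1}|\nabla u|$. Local-in-time existence from Theorem~\ref{localtheorymaxsol}, started at time $t$ with data $u(t)$, gives an existence time $\ge c\,m(t)^{-(p-2)}$: rescale by $\lambda=m^{-1}$ and use that $u$ itself is bounded on $\Gamma_2$. Since $u$ must blow up at $T$ (by Theorem~\ref{localtheorymaxsol}(iii) and the boundedness of the $L^\infty_w$ norm), it follows that $T-t\ge c\,m(t)^{-(p-2)}$, which is the claim. The main obstacle I expect is making this local existence time quantitative and uniform for unbounded data. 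To handle it I would localize: away from $\Gamma_2$ the solution has bounded gradient by parabolic estimates, so only the strip matters, and there I run a fixed-point argument in $W^{1,\infty}$ with the Dirichlet heat semigroup. The time $\sim m^{-(p-2)}$ comes from the usual Duhamel estimate $\|\nabla e^{t\Delta}f\|\lesssim t^{-1/2}\|f\|_\infty$ with $\|u\|_\infty$ bounded.
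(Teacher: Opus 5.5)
Your part (i) works, with a different supersolution from the paper's. For $\gamma\le1$, already $M(1+x_n)^\gamma+\gamma^pM^pt$ is a supersolution. For $\gamma>1$, the additive ansatz $K(t+1)(1+x_n)^\gamma+K'(t+1)^{q'}$ works if you check it separately where $1+x_n$ is above or below $\Lambda(t+1)^{p/(p-\gamma(p-1))}$, and it closes with $q'=(p+\gamma)/(p-\gamma(p-1))$ exactly. The paper instead shifts the profile, $M(t+1)(A(t)+x_n)^\gamma$ with $A(t)\sim(t+1)^{p/(p-\gamma(p-1))}$, which avoids the case split.

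Part (ii) has genuine gaps in both estimates. For \eqref{localtheoryuppergradbound}, your barrier is not a supersolution. Since $c_p(1-\beta)=d_p$ and $d_p^p=\beta d_p$, the function $\psi=(1+\eps)c_px_n^{1-\beta}+Cx_n$ satisfies $-\psi''=(1+\eps)\beta d_px_n^{-\beta-1}<(1+\eps)^p\beta d_px_n^{-\beta-1}\le|\psi'|^p$. In other words, multiples $\lambda c_px_n^{1-\beta}$ with $\lambda>1$ of the singular steady state are strict subsolutions. Also, $u(x',x_n+h,t)-u(x',h,t)$ does not solve the equation, so comparing it with $\psi$ is unjustified. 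Even granting $u\lesssim x_n^{1-\beta}$, the inequality $u\le\psi$ says nothing about $\partial_{x_n}u$ at $x_n=0$, because $\psi'(0^+)=\infty$. A cut-off Bernstein estimate on balls of radius $x_n/2$ then only gives $|\nabla u|\le C(n,p)x_n^{-\beta}$ with a non-sharp constant; this is the analogue of the paper's Step~2. The sharp constant $(1+\eps)d_p$ needs an ingredient missing from your plan: a uniform bound on $u_t$ in $\Gamma_1\times[T/2,T)$. The paper obtains it from the maximum principle for $V=u_t$, which solves $V_t-\Delta V=b\cdot\nabla V$ with $b=p|\nabla u|^{p-2}\nabla u$, $V=0$ on $\partial H$, and $V$ bounded on $\{x_n=1\}$ by interior regularity and (i). With this bound, the rescalings $v_j(y,s)=\rho_j^{\beta-1}u((x_j',0)+\rho_jy,t_j+\rho_j^2s)$ with $\rho_j=x_{j,n}\to0$ satisfy $|\partial_sv_j|\le C\rho_j^{\beta+1}\to0$. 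Hence the limit is a stationary half-space solution. The Liouville theorem of \cite{FPS2020} makes it one-dimensional, $V=c_p((y_n+a)^{1-\beta}-a^{1-\beta})$ or $V\equiv0$. Therefore $|\nabla V(e_n)|\le d_p$, which contradicts $|\nabla v_j(e_n,0)|\ge(1+\eps)d_p$.

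Your lower-bound argument fails quantitatively. No local existence time depending only on $\sup_{\Gamma_2}u$ and $m=\sup_{\Gamma_1}|\nabla u|$ can exceed order $m^{-2(p-1)}$. To see this, take a datum $\phi$ that blows up at time $T(\phi)$. Then $\lambda^{\beta-1}\phi(\lambda x)$ has sup norm $\lambda^{\beta-1}\|\phi\|_\infty\to0$ as $\lambda\to\infty$ and gradient $m=\lambda^\beta\|\nabla\phi\|_\infty$, yet it blows up at $\lambda^{-2}T(\phi)\sim m^{-2(p-1)}$. Correspondingly, the $W^{1,\infty}$ fixed point built on $\|\nabla e^{t\Delta}f\|_\infty\lesssim t^{-1/2}\|f\|_\infty$ closes only when $t^{1/2}m^p\lesssim m$. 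That gives $T-t\gtrsim m(t)^{-2(p-1)}$, i.e. $m(t)\gtrsim(T-t)^{-1/(2(p-1))}$, which is weaker even than the type~I rate. The exponent $1/(p-2)$ again comes from the uniform bound on $u_t$. Apply Duhamel to $w=u_t$ rather than to $u$, with $w$ bounded and drift $|b|\lesssim m^{p-1}$ on a time window of length $\sim m^{-2(p-1)}$. This gives $\sup_{\Gamma_1}|\nabla u_t(\cdot,t)|\le C(m(t)+1)^{p-1}$. The paper needs weighted half-space heat kernel bounds here for $x_n>1$, because $|\nabla u|\lesssim(1+x_n)^{\gamma-1}$ need not be bounded there, contrary to your localization claim. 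Since $m$ is locally Lipschitz with $|m'|\le\sup_{\Gamma_1}|\nabla u_t|$, this yields $m'\le C(m+1)^{p-1}$ a.e. Integrating up to $T$, where $m\to\infty$ by Theorem~\ref{localtheorymaxsol}(iii) together with (i), gives \eqref{localtheorygradlowerbound}.
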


\begin{rmq}
In a bounded domain, it is known \cite{FPS2020} that any non-global solution satisfies  \eqref{localtheoryuppergradbound} everywhere, where $x_n$ is replaced by $dist(x,\partial \Omega)$, and this is optimal.
\end{rmq}

 \begin{rmq}
Under the assumption of Theorem~\ref{localtheoryonlyGBUpossible}(ii), estimate \eqref{localtheoryuppergradbound} is complemented by
$$
|\nabla u(x,t)|\le C x_n^{\gamma-1}+\tilde C, \quad (x,t)\in \big( H\setminus\Gamma_1\big)\times[T/2,T),
$$
 for some constants $C, \tilde C > 0$.
This is a direct consequence of \eqref{localtheoryC1-1double} and the Bernstein estimate \eqref{localtheorygrabound}. Moreover,  by \eqref{localtheoryuppergradbound} for any $\eps>0$  we have
$$
 u(x,t)\le (1+\eps)c_p x_n^{1-\beta}+C_\eps x_n, \quad (x,t)\in \Gamma_1\times [T/2,T).
$$
\end{rmq}
We note that the critical exponent $\gamma=1+\beta$ coincides with that obtained in \cite{GGK} for $ H=\R$. 
The property \eqref{localtheorygradlowerbound} is the analogue of \eqref{localtheoryfinal3} in the case of a bounded domain.
We refer to Theorem 2.1(ii) in \cite{MS1} for the existence of a blow-up solution satisfying the two sided version of \eqref{localtheorygradlowerbound} in the half-line case, i.e., $ H=(0,\infty)$.

 We next give the following bubbling property of type II solutions.

\begin{thm}\label{localtheoryapplication}
Let $p>2$ and $u$ be the maximal solution of \eqref{localtheoryeqE1b} corresponding to $u_0\in X_1$ and set $T=T_{\max}(u_0)$.
If $u$ is of type II, then there exists a sequence $(x_j,t_j)_j\subset H\times[T/2,T)$ with $(x_{j,n},t_j)\to (0,T)$ such that  
 $$
\lambda_j^{\beta}\nabla u(x_j+\lambda_j x,t_j+\lambda^2_j t)\to d_p(x_n+\beta)^{-\beta} e_n,\quad \hbox{locally uniformly on $\bar  H$ as } j\to \infty, 
 $$
 where $\lambda_j=|\nabla u(x_j,t_j)|^{1-p}$ and $e_n:=(0,\cdots,0,1)\in \R^n$.
 \end{thm}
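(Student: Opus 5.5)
The plan is a blow-up (rescaling) argument combined with a Liouville-type classification of entire, bounded-gradient solutions in a half-space.

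\textbf{Choice of points.} Since $u$ is of type~II, there is $R_0$ with $\limsup (T-t)^{\beta}\|\nabla u(\cdot,t)\|_{L^\infty(\Gamma_{R_0})}=\infty$. By Theorem~\ref{localtheorymaxsol}(ii), the gradient is bounded on $\bar\Gamma_R\times(\eps,t]$ for each $t<T$. We pick $(x_j,t_j)$ by a doubling-type lemma (as in Pol\'a\v{c}ik--Quittner--Souplet) applied to $M(x,t)=|\nabla u(x,t)|^{1/\beta}=|\nabla u|^{p-1}$ on $\Gamma_{R_0}\times[T/2,t_j']$, with $t_j'\to T$ chosen so that $(T-t_j')^{\beta}\sup_{\Gamma_{R_0}}|\nabla u(\cdot,t_j')|\to\infty$. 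This gives points with $M(x_j,t_j)\to\infty$ and
\[
M\le 2M(x_j,t_j)\quad\text{on a parabolic neighbourhood of size } k_j\lambda_j, \qquad k_j\to\infty,
\]
where $\lambda_j=M(x_j,t_j)^{-1}=|\nabla u(x_j,t_j)|^{1-p}$. The neighbourhood is intersected with $\bar H$ and taken backward in time, and the type~II growth is what makes room for $k_j\lambda_j^2\le t_j-T/2$. We also need $x_{j,n}\to0$ and $x_{j,n}/\lambda_j$ bounded. For this we use that interior gradient blow-up cannot happen away from the boundary. The Bernstein-type estimate \eqref{localtheorygrabound} from the paper, together with $u\in L^\infty_w$ locally bounded on strips, should give $|\nabla u|\le C x_n^{-\beta}+C$ locally in time, or at least some bound of that form. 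This yields $x_{j,n}\lesssim\lambda_j$. Translating in $x'$ then gives $a_j:=x_{j,n}/\lambda_j\to a\ge0$ along a subsequence.

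\textbf{Rescaling and compactness.} Set
\[
v_j(y,s)=\lambda_j^{\beta-1}\bigl(u(x_j+\lambda_j y,t_j+\lambda_j^2 s)-u(x_j',0,t_j+\lambda_j^2 s)\bigr),
\]
noting that $u$ vanishes on $\partial H$, so the subtracted term is $0$. The equation is invariant under this scaling, $|\nabla v_j(0,0)|=1$, and $|\nabla v_j|\le 2^{\beta}$ on the rescaled domain $\{y_n>-a_j\}\times(-k_j,0]$. Moreover $v_j=0$ on $y_n=-a_j$, so $|v_j|\le C(y_n+a_j)$. Parabolic $L^p$ and Schauder estimates up to the flat boundary, with bounded right-hand side $|\nabla v_j|^p$, give $C^{1+\alpha,(1+\alpha)/2}$ bounds and then $C^{2+\alpha}$ bounds. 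Along a subsequence $v_j\to V$, an ancient solution in $\{y_n>-a\}\times(-\infty,0]$ with $V=0$ on the boundary, $|\nabla V|\le C$ and $|\nabla V(0,0)|=1$. We must have $a>0$ or the boundary gradient is nonzero, since the flat case $u\sim c\,x_n$ would contradict the blow-up of $\lambda_j^{-\beta}$ only through the normalization.

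\textbf{Liouville step (main obstacle).} The key step is to show that $V$ is one-dimensional and stationary. We also need $\nabla V(0,0)$ to point in the direction $e_n$ with the precise profile $d_p(y_n+a)^{-\beta}$. Here a bounded $|\nabla V|$ alone is not enough, since $V=c\,y_n$ is also a solution, so we must exploit extra information.

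\emph{Upper bound.} Rescaling the upper estimate \eqref{localtheoryuppergradbound}, which should extend to $u_0\in X_1$ near the blow-up boundary region, gives
\[
|\nabla V|\le d_p(y_n+a)^{-\beta}\quad\text{(equality at the origin via normalization choice)}.
\]
The additive constant $C_\eps$ disappears after rescaling, and $\eps\to0$ along the sequence.

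\emph{Matching lower bound.} At $(0,0)$ we have $1=|\nabla V|$, so maximality of the points should be arranged so that equality holds there. Then $z=|\nabla V|^2$ and the ratio $|\nabla V|/\bigl(d_p(y_n+a)^{-\beta}\bigr)$ satisfy a parabolic inequality, since $W=d_p(y_n+a)^{-\beta}e_n$ is the gradient of the explicit stationary solution $c_p(y_n+a)^{1-\beta}$. A strong maximum principle then forces $\nabla V\equiv W$. Normalizing $|\nabla V(0,0)|=1$ gives $d_p a^{-\beta}=1$, that is $a=d_p^{1/\beta}=\beta$. Since $\nabla u(x_j,t_j)\to$ direction $e_n$, the shift by $x_j$ yields $d_p(x_n+\beta)^{-\beta}e_n$ as claimed. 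Local uniform convergence on $\bar H$ follows from the $C^{1+\alpha}$ compactness.

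I expect the main difficulty to be arranging the point selection so that the sharp upper bound is attained at the origin in the limit, so that the strong maximum principle applies. If doubling does not give this directly, I would instead choose $(x_j,t_j)$ to nearly maximize $x_n^{\beta}|\nabla u|$ over $\Gamma_1\times[T/2,t]$. The type~II hypothesis combined with \eqref{localtheoryuppergradbound} forces this supremum to tend to $d_p$.
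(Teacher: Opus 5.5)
The overall rescaling strategy is right, but your Liouville step has a genuine gap, and the conclusion you reach there is inconsistent with the rest of your argument. The bound \eqref{localtheoryuppergradbound} is proved only under the subcritical growth hypothesis of Theorem~\ref{localtheoryonlyGBUpossible}. It is false for general $u_0\in X_1$: the self-similar solution in \eqref{localtheoryexam} has $|\nabla u(x,t)|\to\infty$ at every fixed $x$. Suppose you grant a rescaled bound $|\nabla V|\le d_p(y_n+a)^{-\beta}$, which one could extract from Theorem~\ref{localtheorydichotomie}. Even then, nothing in your point selection produces equality at the origin, so a strong maximum principle has no touching point to work with. In fact equality is impossible. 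It would give $\nabla V\equiv d_p(y_n+a)^{-\beta}e_n$ with $a=\lim x_{j,n}/\lambda_j$, which is infinite on the limiting boundary $\{y_n=-a\}$. But $|\nabla V|\le 2^{\beta}$ there, by the doubling bound and $C^{1+\alpha}$ convergence up to the boundary. The correct limit is $V=c_p\bigl((y_n+a+b)^{1-\beta}-b^{1-\beta}\bigr)$ with $b>0$. The normalization $|\nabla V(0,0)|=1$ only yields $a+b=\beta$, which is why the profile on $\bar H$ is $d_p(x_n+\beta)^{-\beta}e_n$. Your reason for needing a sharp bound is also mistaken. With the Dirichlet condition, $c\,y_n$ is a solution only for $c=0$; the functions $c\cdot y+|c|^p s$ solve the equation only in the whole space. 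What you are missing is this: once the limit is an \emph{entire} nonnegative solution in a half-space vanishing on the boundary, the parabolic Liouville theorem \cite[Theorem~3]{chso2} gives $V\equiv 0$ or $V=c_p\bigl((y_n+a+b)^{1-\beta}-b^{1-\beta}\bigr)$. The normalization then finishes the proof, with no upper bound needed.

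Your point selection does not supply the two inputs this requires. First, to get an entire limit, the paper applies the doubling lemma on $D=\bar\Gamma_1\times[T/2,T)$ relative to $\Gamma=\bar\Gamma_1\times\{T\}$, with distance $T-t$. It starts from points with $|\nabla u(x_j,t_j)|^{p-1}(T-t_j)>2j$, which is exactly the type~II hypothesis. This yields $(y_j,s_j)$ with $|\nabla u(y_j,s_j)|^{p-1}(T-s_j)>2j$, and the doubling bound holds on a neighborhood extending both forward and backward in time. Your backward-only neighborhood gives just an ancient limit, and you offer no classification for it.

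Second, boundedness of $x_{j,n}/\lambda_j$ needs a scale-invariant gradient bound valid uniformly up to $T$, and your Bernstein argument does not give one. From an $L^\infty$ bound on $u$ alone, \eqref{localtheorygrabound} with $R\sim x_n$ gives only $|\nabla u|\lesssim x_n^{-1}$. That means $x_{j,n}\lesssim\lambda_j^{\beta}$, which is much larger than $\lambda_j$. Moreover, for general data in $X_1$, $u$ need not stay bounded on strips as $t\to T$. The paper instead uses the universal estimate \eqref{localtheoryusful1} of Proposition~\ref{localtheorythmancient3}. For $\rho_j=y_{j,n}\le 1$ it gives $\lambda_j^{-1}\le C\bigl(\rho_j^{-1}+(T-s_j)^{-1}\bigr)$. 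Since $(T-s_j)^{-1}<\lambda_j^{-1}/(2j)$, this yields $\rho_j/\lambda_j\le C$, $\rho_j\to 0$ and $(T-s_j)\lambda_j^{-2}\to\infty$. This step cannot be skipped. If $\rho_j/\lambda_j\to\infty$, the limit lives in $\R^n\times\R$, where $e\cdot y+s$ with $|e|=1$ satisfies all your normalizations.
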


It is known that blow-up in a bounded domain is always  GBU of type~II (see \cite{GuoHu,PS_jmpa18}). The situation is completely different in a half-space, as there exist both type~II GBU solutions controlled by the singular stationary profile (cf.~Theorem~\ref{localtheoryonlyGBUpossible}(ii)), and  type~I $L^\infty$ blow-up solutions of the form \eqref{localtheoryexam} from \cite[Theorem~4]{chso2}.
The following result roughly says that these are, at least locally, the only two possibilities. Namely, at any point $(x,t)$, the gradient of the solution either behaves in a type~I manner, or is controlled by the singular stationary profile.

\begin{thm}\label{localtheorydichotomie}
 Let $p>2$ and $u$ be the maximal solution of \eqref{localtheoryeqE1b} corresponding to $u_0\in X_1$  and set $T=T_{\max}(u_0)<\infty$.
For all $\eps>0$, there exists a constant $C=C(n,p,\eps)$ such that for all $(x,t)\in  H\times[T/2,T)$:
\be{localtheorydico1}
|\nabla u(x,t)|\le Cx_n^\beta(T-t)^{-\beta} \quad
\hbox{or}\quad 
|\nabla u(x,t)|\le (1+\eps)d_px_n^{-\beta}+C.
\ee
\end{thm}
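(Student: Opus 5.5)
We argue by contradiction, combined with a rescaling (Liouville-type) argument in the spirit of Poláčik--Quittner--Souplet. Suppose the statement fails for some $\eps>0$. Then for every $j$ there is $(x_j,t_j)\in H\times[T/2,T)$ such that
$$
|\nabla u(x_j,t_j)|> j\,x_{j,n}^\beta(T-t_j)^{-\beta}
\quad\hbox{and}\quad
|\nabla u(x_j,t_j)|>(1+\eps)d_p x_{j,n}^{-\beta}+j .
$$
Set $M_j:=|\nabla u(x_j,t_j)|\to\infty$ and $\lambda_j:=M_j^{-(p-1)}$, which is the natural scaling for the gradient since $\lambda^\beta\nabla u(x_0+\lambda y,t_0+\lambda^2 s)$ solves the gradient form of the equation. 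The first inequality gives $x_{j,n}^\beta (T-t_j)^{-\beta}\le M_j/j$. The second gives $M_j\ge (1+\eps)d_p x_{j,n}^{-\beta}$, that is $x_{j,n}/\lambda_j\ge ((1+\eps)d_p)^{1/\beta}=:a_\eps$. This is only a lower bound, so the rescaled distance to the boundary may stay bounded or go to infinity.

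For the time scale, write $(T-t_j)\ge (j/M_j)^{1/\beta}x_{j,n}=j^{p-1}\lambda_j x_{j,n}$. Hence $(T-t_j)/\lambda_j^2\ge j^{p-1}x_{j,n}/\lambda_j\ge j^{p-1}a_\eps\to\infty$. So the rescaled functions
$$
v_j(y,s):=\lambda_j^{\beta-1}\big(u(x_j+\lambda_j y,t_j+\lambda_j^2 s)-u(x_j,t_j)\big)
$$
are defined on backward parabolic regions whose time length tends to $\infty$ (at least on $(-t_j/(2\lambda_j^2),0]$, also $\to\infty$) and also forward up to $s=(T-t_j)/\lambda_j^2\to\infty$. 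They satisfy the same equation with $|\nabla v_j(0,0)|=1$, on the domain $\{y_n>-x_{j,n}/\lambda_j\}$. To pass to the limit, I need local uniform bounds on $\nabla v_j$, and I will use a doubling lemma (Poláčik--Quittner--Souplet) to replace $(x_j,t_j)$ by nearby points where $|\nabla u|$ is almost maximal in a parabolic neighborhood of size $\sim j\lambda_j$. The sets of ``bad'' points, i.e. those violating both alternatives, are preserved up to adjusting constants, because the two alternatives are comparable at nearby points once $x_n/\lambda$ is large. This yields $|\nabla v_j|\le 2$ on growing cylinders. Parabolic regularity (and boundary regularity near $y_n=-x_{j,n}/\lambda_j$, since $u=0$ there) then gives convergence in $C^{1+\alpha}$ to an entire-in-time solution $v$ with bounded gradient, $|\nabla v(0,0)|=1$, defined either in $\R^n\times\R$ or in a half-space $\{y_n>-a\}\times\R$ with $a\ge a_\eps$.

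Case 1: $x_{j,n}/\lambda_j\to\infty$. Then $v$ is an entire solution in $\R^n\times\R$ with bounded gradient. The Liouville theorem for the gradient equation forces $\nabla v$ constant, and then $u_t$ is constant as well. Here the extra information is needed: the first alternative fails, so the rescaled time to blow-up is infinite, and the derivative $\lambda_j^{\beta+1}u_t$ is controlled. I expect this step to be the main obstacle. Constant-gradient solutions $v=c\cdot y+|c|^p s$ are genuinely entire, so a contradiction must come from comparing with the type~I alternative. My plan is to use the Bernstein estimate \eqref{localtheorygrabound} (referenced earlier) in the form $|\nabla u|\le C(\|u\|/\rho + \dots)$ on balls of radius $\rho\sim x_{j,n}/2$ combined with a time-derivative bound, to show that a unit gradient at scale $\lambda_j\ll x_{j,n}$ forces growth of $u$ like $x_n^{\beta+1}(T-t)^{-\beta}$. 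That growth would contradict the failure of the first alternative.

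Case 2: $x_{j,n}/\lambda_j\to a\in[a_\eps,\infty)$. Then $v$ is a half-space ancient solution with bounded gradient, vanishing on the boundary up to an additive constant. A half-space Liouville-type classification, which I expect to be the content used in Theorem~\ref{localtheoryapplication}, should give that $\nabla v$ is either bounded by the singular profile $d_p(y_n+a)^{-\beta}$ or is one-dimensional, equal to $d_p(y_n+b)^{-\beta}e_n$ with $b\ge a$. Either way $|\nabla v(0,0)|\le d_p a^{-\beta}\le(1+\eps)^{-1}<1$, which contradicts $|\nabla v(0,0)|=1$.
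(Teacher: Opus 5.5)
There is a genuine gap. Nowhere do you obtain an \emph{upper} bound on $|\nabla u|$ (or on $u$) in terms of $x_n$ and $T-t$, and without one neither Case~1 nor the compactness step closes.

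In Case~1 the limit can be $v=e\cdot y+s$ with $|e|=1$, as you note. So the contradiction must come from information about $u$ itself, and your sketched plan runs the wrong way. The failure of the first alternative is a \emph{lower} bound on $|\nabla u(x_j,t_j)|$, and the Bernstein estimate \eqref{localtheorygrabound} only turns it into a lower bound on $\sup u$ near $x_j$. What contradicts this is a universal bound $u\le C(n,p)\,x_n^{1-\beta}\bigl(2+x_n^{2\beta}(T-t)^{-\beta}\bigr)$ for $t\ge T/2$. That bound is Proposition~\ref{localtheorythmancient3}, and it is the real content of the proof. It is obtained in three stages:
\begin{enumerate}
\item the integral estimate of Lemma~\ref{localtheory1L3}: the inequality $y'\ge C_1y^p-C$ for $y(t)=\int u(t)\varphi$ over cylinders $B_3'(a)\times(0,3)$, derived via the weighted Poincar\'e inequality, forces $y(t)\le C(1+(T-t)^{-\beta})$ because $u$ exists up to $T$;
\item the Li--Yau estimate of Theorem~\ref{localtheorythmLY}(ii), which gives a pointwise bound at $x_n=1$;
\item scaling, followed by Bernstein for the gradient.
\end{enumerate}
No time-derivative bound can replace this. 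The $u_t$ bound of Section~\ref{localtheoryBA} needs subcritical growth and depends on $u$. The theorem covers every $u_0\in X_1$, including critical growth where $u$ blows up at every point of $H$, with a constant depending only on $(n,p,\eps)$. Once \eqref{localtheoryusful1} is available, Case~1 disappears immediately: $M_j\le 2C_1\bigl(x_{j,n}^{-\beta}+x_{j,n}^{\beta}(T-t_j)^{-\beta}\bigr)<2C_1x_{j,n}^{-\beta}+2C_1M_j/j$, hence $x_{j,n}/\lambda_j<(4C_1)^{p-1}$ for $j\ge 4C_1$.

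The doubling step is a second gap, and it bites precisely in Case~2. The doubling lemma replaces $(x_j,t_j)$ by a point with larger gradient at parabolic distance up to about $2j\lambda_j$. In Case~2 this is much larger than $x_{j,n}\sim\lambda_j$, so the new point need not satisfy $y_{j,n}/\tilde\lambda_j\ge a_\eps$. Yet that inequality is the only source of your contradiction: the limit $c_p\bigl((y_n+a'+b)^{1-\beta}-b^{1-\beta}\bigr)$ has unit gradient at the origin whenever $a'+b=d_p^{1/\beta}$. Your comparability remark only helps when $x_n/\lambda\to\infty$.

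The paper needs no doubling. With \eqref{localtheoryusful1} it shows $\rho_j=x_{j,n}\to0$ and $(T-t_j)\rho_j^{-2}\to\infty$, then rescales at the distance scale $\rho_j$ about the boundary point $(x_j',0)$. It reads off $|\nabla v_j|\le 2C_1\bigl(y_n^{-\beta}+y_n^{\beta}((T-t_j)\rho_j^{-2}-s)^{-\beta}\bigr)$, which gives compactness away from $y_n=0$ and $v_j\le Cy_n^{1-\beta}$ near it. The limit is an entire solution in $H\times\R$, and \cite[Theorem~3]{chso2} gives $V=c_p\bigl((y_n+a)^{1-\beta}-a^{1-\beta}\bigr)$. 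Hence $|\nabla V(e_n)|\le d_p$, which contradicts $|\nabla v_j(e_n,0)|\ge(1+\eps)d_p$ for all $j$.
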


The proof of Theorems~\ref{localtheoryapplication} and \ref{localtheorydichotomie} will make use a recent Liouville-type theorem from \cite{chso2}. The latter shows that any entire solution ($u\in C^{2,1}( H\times\R$) of \eqref{localtheoryeqE1b}  is stationary and only depends on $x_n$ (we refer to \cite{chso2} for other Liouville-type theorems concerning \eqref{localtheoryeqE0}). Theorem~\ref{localtheoryapplication} is an analogue of \cite[Theorem~5.1]{PQ} for the Fujita type equation
\be{localtheoryparabclass}
u_t-\Delta u= u^p.
\ee
Namely, for $p\ge p_L:=1+6/(n-10)_+$, any radial type~II solutions follows a bubling behavior: there exists a $t_j\to T_{\max}$ such that 
\be{localtheorybubl}
\lambda_j^{2/(p-1)} u(\lambda_j r, t_j)\to \phi_1(x_n),\quad \hbox{as } j\to \infty,
\ee
 where $\lambda_j=\| u(\cdot,t_j)\|_\infty^{(1-p)/2}$ and $\phi_1$ is the radial positive
steady state of \eqref{localtheoryparabclass} normalised by $\phi(0)=1$.
The proof of \cite{PQ} is based on a Liouville-type theorem \cite[Theorem~1.6]{PQ}, which asserts that any radial bounded solution of \eqref{localtheoryparabclass} in $\R^n\times\R$ is necessarily stationary (this property fails for $p<p_L$, cf. \cite{FY}).
The property \eqref{localtheorybubl}  is actually known for all $p\ge p_S$ (\cite{MM04}) but under an additional zero number assumption on $u_0$.
In the subcritical range $p<p_S$, we refer to \cite{BV,merle1998optimal,MZ00,PQS,Qu} for other Liouville-type theorems and their applications to blow-up behavior.

\subsection{Ideas of proofs}

 We begin by showing uniqueness of solutions to \eqref{localtheoryeqE1b} in the class $Y_T$ (Proposition~\ref{localtheoryuni}) in Section~\ref{localtheoryun1}, as a consequence of a variant of the comparison principle Proposition~\ref{localtheoryuniqueness} below,  which allows solutions up to the critical growth at space infinity.
The latter is proved by combining the Bernstein-type estimate \eqref{localtheorygrabound} with a penalization argument at space infinity, inspired by \cite{GilGK,GGK}.

In Section~\ref{localtheoryNE}, we derive our local nonexistence theorem by means of comparison with special backward self-similar solutions from \cite{chso2}
and a translation argument.

We next prove, in Section ~\ref{localtheoryExistencesection}, the existence of a solution on a small time interval $(0,T)$, which belongs to $Y_T$ and satisfies \eqref{localtheoryconstsol2}, 
where we carefully estimate the dependence of $T$ on the initial data.
Theorems~\ref{localtheorymaxsol}-\ref{localtheorymaxsol1p2}, including the blow-up alternatives, are derived as a consequence of this and Proposition~\ref{localtheoryuni}.

 Section~\ref{localtheoryBA} is devoted to the proof of Theorem~\ref{localtheoryonlyGBUpossible}. We derive \eqref{localtheoryC1-1double} in Theorem~\ref{localtheoryonlyGBUpossible} by a comparison argument using supersolutions of the form $M(t+1)(A(t)+x_n)^\gamma$. 
  The optimal lower type~II blow-up rate estimate \eqref{localtheorygradlowerbound} is proved by adapting to the present context
the semigroup arguments from \cite{PS_jmpa18} (see also the proof of \cite[Theorem~40.18*]{quittner2019superlinear}).
The proof of \eqref{localtheoryuppergradbound} is done in three steps. We first prove a uniform bound on $u_t$ in finite strips by combining parabolic regularity and maximum principle arguments. We next obtain a rough version of \eqref{localtheoryuppergradbound} by a Bernstein type argument using the bound on $u_t$.
Based on this, the sharp version of \eqref{localtheoryuppergradbound} is then deduced by a rescaling argument making use of the elliptic Liouville-type (one-dimensionality) theorem from \cite{FPS2020}.

In Section~\ref{localtheoryAPL}, we provide the proofs of Theorems~\ref{localtheoryapplication}-\ref{localtheorydichotomie}. These results rely on a priori estimates and a compactness argument, combined with the parabolic Liouville-type theorem \cite[Theorem~3]{chso2}. The proof of Theorem~\ref{localtheorydichotomie} adapts the approach of \cite[Section~3]{FPS2020} to our setting, making use of the a priori estimate \eqref{localtheoryusful1} and \cite[Theorem~3]{chso2}.
Similarly, Theorem~\ref{localtheoryapplication} follows from Proposition~\ref{localtheorythmancient3} together with the doubling lemma from \cite{PQS2}.

\section{Local theory: proof of Theorems~\ref{localtheorymaxsol}-\ref{localtheorynonexistence} and Proposition~\ref{localtheoryuni}} \label{localtheoryLT}

In this section, we denote
$$
L u:=\partial_t u -\Delta u.
$$
 We also denote $BC_0^1(\bar  H)$ 
 the space of functions $u\in C^1(\bar H)$ such that $u$ and $\nabla u$ are bounded in $ H$ and $u=0$ on $\partial H$.
\subsection{Uniqueness: proof of Proposition~\ref{localtheoryuni}}\label{localtheoryun1}
In this section we prove the uniqueness of the solution to \eqref{localtheoryeqE1b} in the class $Y_T$, defined in \eqref{localtheorydefset}.
Proposition~\ref{localtheoryuni} will follow from the following comparison principle.
\begin{prop}\label{localtheoryuniqueness}
Let $u \in Y_T$ be a solution of \eqref{localtheoryeqE1b},
and let $v \in C(\bar H\times[0,T))\cap C^{2,1}( Q_T)$ be a nonnegative function such that $Hv\ge |\nabla v|^p$ in $Q_T$.
Assume that 
 $u(\cdot,0)\le v(\cdot,0)$ in $ H$.
Then $u\le v$ in  $Q_T$.
\end{prop}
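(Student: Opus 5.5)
Here $Hv$ is read as $Lv=v_t-\Delta v$. The plan is a comparison argument for $u$ (the solution in $Y_T$) against the supersolution $v$, with two extra ingredients. First, a Bernstein-type gradient bound controls $\nabla u$ where the nonlinearity is linearized. Second, a penalization at space infinity handles the growth $u\lesssim (1+x_n)^{\beta+1}$. The idea follows \cite{GilGK,GGK}.

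\textbf{Step 1 (gradient bound for $u$).} Let $0<\tau<T'<T$. Using $u\in L^\infty_w(Q_T)$ and local Bernstein estimates (the estimate \eqref{localtheorygrabound} referred to in the paper), obtain
$$|\nabla u(x,t)|\le C_\tau\Big(x_n^{-\beta}+(1+x_n)^{\beta}\Big)\quad\text{in } H\times[\tau,T'].$$
This comes from rescaling parabolic cylinders of size comparable to $\min(x_n,1)$ or to $1+x_n$: the sup of $u$ there is $\lesssim (1+x_n)^{\beta+1}$, and the natural scaling gives $|\nabla u|\lesssim \rho^{-1}\sup u+\rho^{-\beta}$. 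Near the boundary, the condition $u\in C^0_{\mathcal U}$ (uniform smallness on strips $\Gamma_r$) is what allows the comparison near $x_n=0$ to close.

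\textbf{Step 2 (linearization and penalization).} Set $z=u-v$. Since $t\mapsto |t|^p$ is convex and $Lu=|\nabla u|^p$, $Lv\ge|\nabla v|^p$, we get
$$Lz\le |\nabla u|^p-|\nabla v|^p\le p|\nabla u|^{p-2}\nabla u\cdot\nabla z=:b\cdot\nabla z,$$
which uses only the gradient of $u$ and is the reason only $u$ needs to lie in $Y_T$. By Step 1, $|b|\le C(x_n^{-1}+(1+x_n))$ on $[\tau,T']$, since $|\nabla u|^{p-1}\lesssim x_n^{-1}+(1+x_n)$. For $\eps>0$ compare $z$ with the penalization
$$\psi_\eps=\eps e^{Kt}\big((1+x_n)^{2}+|x'|^2\big)^{m}\quad\text{or}\quad \eps\, e^{K t}\exp\big(\mu(t)(1+|x|^2)\big),$$
whichever grows faster than $(1+x_n)^{\beta+1}$. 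Since $z^+\le u\lesssim(1+x_n)^{\beta+1}$, the maximum of $z-\psi_\eps$ is attained on a bounded set. On $\partial H$ we have $z\le0$, because $u=0\le v$. Choose $K$ (and possibly a time-dependent $\mu$ on a short time interval) so that $L\psi_\eps-b\cdot\nabla\psi_\eps>0$. The linear growth of $b$ at infinity is exactly the borderline that a Gaussian-type weight $e^{\mu(t)|x|^2}$ with $\mu'\gtrsim\mu^2$ absorbs, at least on short time intervals; iterating over these intervals covers $(0,T')$. The maximum principle then gives $z\le\psi_\eps$, and letting $\eps\to0$ gives $z\le0$ on $[\tau,T']$.

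\textbf{Step 3 ($\tau\to0$ and the boundary singularity).} Both $x_n^{-1}$ near $\partial H$ and $t\to0$ are delicate, so the comparison should be done on $\Gamma_{\delta,\infty}\times[\tau,T']$. The boundary term $\sup_{x_n=\delta}z^+$ is at most $\sup_{\Gamma_\delta\times(0,T)}u$, which tends to $0$ as $\delta\to0$ by the $C^0_{\mathcal U}$ property. Similarly, $\sup z(\cdot,\tau)^+\to0$ locally as $\tau\to0$, using continuity at $t=0$, $u_0\le v_0$, and the uniform bound together with the penalization at infinity. Adding the constant $\sup_{x_n=\delta}z^+$ plus the time-$\tau$ error to the barrier and letting $\delta,\tau,\eps\to0$ concludes.

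\textbf{Main obstacle.} The hardest step is the barrier in Step 2: the coefficient $b$ grows linearly at infinity, which matches the critical growth $\beta+1$, so polynomial penalizations fail. I expect to need a weight like $\exp(\mu(t)(1+x_n^2))$, or $x_n$-dependent only after handling $x'$ separately by bounded-in-$x'$ arguments. The weight must be chosen so that $(1+x_n)^{\beta+1}\ll\psi_\eps$ while $L\psi_\eps\ge b\cdot\nabla\psi_\eps$ on small time steps.
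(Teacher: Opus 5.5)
Your overall strategy is the paper's: linearize, use the Bernstein estimate with $u\in L^\infty_w$ to get a drift growing linearly in $x_n$, penalize at infinity, and use $u\in C^0_{\mathcal U}$ to neutralize a strip near $\partial H$. Your convexity linearization $|\nabla u|^p-|\nabla v|^p\le p|\nabla u|^{p-2}\nabla u\cdot\nabla(u-v)$ is cleaner than the paper's mean-value coefficient, which forces it to bound $|\nabla v|$ at the maximum point.

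However, there is a genuine gap at $t\to0$. Your Step 1 bound holds on $[\tau,T']$ with a constant $C_\tau$ that must blow up as $\tau\to0$. For merely continuous $u_0$, $\nabla u(\cdot,t)$ need not stay bounded as $t\to0^+$, even far from $\partial H$. So the rate $K=K_\tau$ in $\psi_\eps=\eps e^{K(t-\tau)}\rho^m$ blows up too.

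Comparison on $\Gamma_{\delta,\infty}\times[\tau,T']$ then only yields $z\le \eps e^{K_\tau(t-\tau)}\rho^m+\eta(\tau,\eps)+\sup_{\Gamma_\delta\times(0,T)}u$. Here $\eta(\tau,\eps)$ bounds $z^+(\cdot,\tau)$ on the ball $B_{R(\eps)}$ outside which the penalization dominates, and $R(\eps)\to\infty$ as $\eps\to0$. The limits do not commute:
\begin{itemize}
\item Sending $\tau\to0$ at fixed $\eps$ kills $\eta$ but makes $\eps e^{K_\tau t}$ explode.
\item Sending $\eps\to0$ at fixed $\tau$ turns $\eta$ into $\sup_H z^+(\cdot,\tau)$. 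You have no control on this, since $z(\cdot,\tau)\to z(\cdot,0)\le0$ only locally uniformly; uniform smallness of $z^+$ on $H$ is essentially the claim itself.
\item Coupling $\eps=\eps(\tau)$ fails for the same reason.
\end{itemize}

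The missing idea, which is the heart of the paper's proof, is to keep the time dependence explicit. For $x_n\ge R_0$, apply \eqref{localtheorygrabound} on balls of radius comparable to $x_n$, together with $u\lesssim(1+x_n)^{1+\beta}$. This gives $|\nabla u|\le C_1t^{-1/p}(1+x_n)^\beta$ with $C_1$ independent of $t$, so $|b|\le Ct^{-(p-1)/p}(1+x_n)$ has an \emph{integrable} singularity at $t=0$. One then puts the factor $e^{\mu t^{1/p}}$ into the penalization; its logarithmic time derivative $\frac{\mu}{p}t^{-(p-1)/p}$ dominates $|b|\,|\nabla h_\eps|/h_\eps$ for $\mu$ large. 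The barrier is then bounded uniformly down to $t=0$. One compares directly on $\bar H\times[0,\tau]$, using continuity of $u-v$ at $t=0$ and $u_0\le v_0$, with no limit in $\tau$; the paper does this on a short interval and then continues.

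Your stated main obstacle is misdiagnosed: polynomial penalizations do not fail. For $\psi=\rho^m$, or the paper's $(1+x_n)^{1+\beta}$ with a cut-off, $|\nabla\psi|\lesssim\psi/(1+x_n)$. Hence $|b|\,|\nabla\psi|\lesssim\psi$, and the linear growth of $b$ is absorbed by an exponential time factor. Neither a Gaussian weight nor iteration over short intervals is needed for the spatial growth; the real difficulty is the time singularity above.

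Separately, your near-boundary bound $|\nabla u|\lesssim x_n^{-\beta}$ in Step 1 is not justified for $p>2$. Bernstein at scale $x_n$ gives $|\nabla u|\lesssim x_n^{-1}\sup_{B_{x_n/2}(x)}u$ plus lower-order terms, and you only know $\sup_{\Gamma_r}u\to0$, not $\sup_{\Gamma_r}u\lesssim r^{1-\beta}$. You do not need this bound, though, once you work on $\Gamma_{\delta,\infty}$ with $\delta$ fixed during the comparison.
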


\begin{rmq}\label{localtheoryuni+1}
$(i)$ We stress that the conclusion of Proposition~\ref{localtheoryuniqueness} remains valid whenever $u\in C_{\mathcal{U}}^0(Q_T)\cap C^{2,1}( Q_T)$ satisfies $Hu\le |\nabla u|^p$ and that $u\in L_w^\infty (Q_T)$ is strengthened to
$$
|\nabla u(x,t)|\le C(1+x_n)^\beta,\quad (x,t)\in  Q_T.
$$
$(ii)$ For any any $R\in (0,\infty)$, it is easily checked that the argument of the proof of Proposition~\ref{localtheoryuniqueness} can be adapted  whenever $ H$ is replaced by $\Gamma_R$, assuming that $u\in C^{2,1}(Q_T)$ and $u,\nabla u\in L^\infty(Q_T)$ instead of $u\in Y_T$, and that $u\le v$ on the parabolic boundary of $\Gamma_R\times(0,T)$.
\end{rmq}

\begin{proof}[Proof of Proposition~\ref{localtheoryuniqueness}] This will be done in two steps. First, we prove the existence of a small time $\tau>0$ such that $u\le v$ in $ H\times[0,\tau]$. Next, denoting by $\tilde{T}$ the supremum of such $\tau$, we show that $\tilde{T}=T$ using a continuation argument.

Assume by contradiction that for some $\tau\in(0,T)$, 
\be{localtheorycontrad1}
\sigma:=\sup_{ Q_\tau} (u-v)\in (0,\infty].
\ee
 Since $u\in Y_T\subset C_{\mathcal{U}}^0(\bar Q_T)$ and $v\ge 0$, there exists $R_0>0$ such that
\be{localtheory3.1'}
\sup_{\Gamma_{R_0}\times(0,\tau)} (u-v)\le \min\{1,\sigma/8\}=\tilde{\sigma}.
\ee
 Consequently, there exist $x_0\in H\setminus \Gamma_{R_0}$ and $t_0\in(0,\tau]$ such that
\be{localtheory3.1''}
(u-v)(x_0,t_0)\ge 3\tilde\sigma.
\ee
On the other hand, by using \eqref{localtheorygrabound} in Theorem~\ref{localtheorypropBern} and $u\in Y_T\subset L_w^\infty(Q_\tau)$, we have 
\be{localtheorygradientu}
|\nabla u(x,t)|\le C_1(n,p,R_0) t^{-1/p} (1+x_n)^\beta\quad \hbox{ for } x_n\ge R_0.
\ee

 Consider the perturbed function 
$$
Z_\eps=u-v-h_\eps
$$
where $\eps>0$, 
$$
h_{\eps}(x',x_n,t)=M\theta\Big(\eps(1+x_n)\Big)(1+x_n)^{1+\beta}e^{\mu t^{1/p}}+\eps\log(1+|x'|^2)\ge 0,\quad M=\max\Big\{1,\|u\|_{L^\infty_w( Q_\tau)}\Big\},
$$
 $\theta\in C^\infty[0,\infty)$ is a nondecreasing cut-off function such that 
\be{localtheorycut1}
\theta(s)=
\begin{cases}
s\quad\hbox{ if }\quad 0\le s\le 1/2,\\
1\quad\hbox{ if }\quad s\ge 1,
\end{cases}\hspace{1cm} \hbox{and }\quad
s\theta'(s)+s^2|\theta''(s)|\le C_2\theta,
\ee  
and  
\be{localtheorymu}
\mu=C(p)M(3+\beta+C_2)C_1^{p-1}>0\quad \hbox{with some}\ C(p)>0.
\ee
 Also assume $\tau\le \mu^{-p}$. We have 
$h_\eps(x_0,t_0) \le \eps A$, where 
$$
A:=\Big[e M (1+x_{0,n})^{2+\beta} +\log(1+|x_0'|^2)\Big].
$$
Choose $\eps=\min(1/2, A^{-1}\tilde\sigma)$. By \eqref{localtheory3.1''}, we have 
\be{localtheory3.3'}
Z_\eps(x_0,t_0)\ge 2\tilde\sigma.
\ee
 Using $v\ge 0$, $u\in L_w^\infty(Q_T)$ and the first part of \eqref{localtheorycut1}, we note that there exists $R_\eps>R_0$ such that  
\be{localtheory3.3''}
Z_\eps(x,t)<0 \quad\hbox{in }\ \Big(\{ |x|\ge R_\eps \}\cap H\Big)\times(0,\tau]
\ee
 (separating the cases $x_n>R_0+\eps^{-1}$ and $x_n\le R_0+\eps^{-1}$ with $|x'|$ large enough).

Now set $Q'=B_{R_\eps}\cap\{x_n>R_0\})\times(0,\tau]$. In view of \eqref{localtheory3.1'}, \eqref{localtheory3.3'}, \eqref{localtheory3.3''} and $u(\cdot,0)\le v(\cdot,0)$, the function $Z_\eps$ achieves its maximum in the compact $\bar Q'$ at some $(x_1,t_1)\in Q'$.
 We will see that this leads to a contradiction.
At this point we have 
\be{localtheorytronc1}
|\nabla Z_\eps(x_1,t_1)|=0,\quad LZ_\eps (x_1,t_1)\ge 0,
\ee
hence in particular
\be{localtheorytronc2}
|\nabla (u-v) (x_1,t_1)|\le  |\nabla h_{\eps}(x_1,t_1)|\quad \hbox{and }\ |\nabla v (x_1,t_1)|\le |\nabla u(x_1,t_1)|+ |\nabla h_{\eps}(x_1,t_1)|.
\ee
Since $u$ and $v$ are classical sub/super-solutions of \eqref{localtheoryeqE1b}, we have 
$$
\begin{aligned}
L Z_\eps &\le|\nabla u|^p-|\nabla v|^p- L h_\eps
&\le |B(x,t)|| \nabla (u-v)(x,t)|- L h_\eps,
\end{aligned}
$$
where 
$$
B(x,t):=\int_0^1 G\Big(s\nabla v(x,t)+(1-s)\nabla u(x,t)\Big)ds,\quad G(\xi)=p|\xi|^{p-2}\xi.
$$
Therefore, combining with \eqref{localtheorygradientu} and \eqref{localtheorytronc2} at the point $(x_1,t_1)\in Q'$, we obtain 
$$
\begin{aligned}
LZ_\eps(x_1,t_1)&\le C_3\Big(|\nabla u(x_1,t_1)|^{p-1}+|\nabla h_\eps(x_1,t_1)|^{p-1}\Big) |\nabla h_\eps(x_1,t_1)|-L h_\eps(x_1,t_1)\\
&\le C_3 \Big( C_1^{p-1}(1+x_{1,n})t_1^{-\frac{p-1}{p}}|\nabla h_\eps(x_1,t_1)|+|\nabla h_\eps(x_1,t_1)|^{p}\Big) -L h_\eps(x_1,t_1),\\
\end{aligned}
$$
where $C_3:=C_3(p)$ is a positive constant.
At this stage,
it suffices to prove that the RHS is a negative function 
by a suitable choice of $C(p)>0$ in \eqref{localtheorymu}. By direct computation, we obtain
$$
|\nabla h_\eps(x,t)|\le 2\eps+ M(1+x_n)^{\beta}\Big((1+\beta)\theta +\eps(1+x_n) \theta'\Big)e^{\mu t^{1/p}},\quad \partial_th=\frac{\mu t^{(1-p)/p}}{p}(1+x_n)^{\beta+1}\theta e^{\mu t^{1/p}}
$$
and 
$$
\Delta h\le 2n\eps+ M(1+x_n)^{\beta-1}\Big((1+\beta)\beta\theta +2\eps(1+x_n) |\theta'|+\eps^2(1+x_n)^2 |\theta''|\Big)e^{\mu t^{1/p}},
$$
where the argument of $\theta,\theta'$ and $\theta''$ is omitted, i.e. $\theta:=\theta(\eps(1+x_n))$ as well as $\theta'$ and $\theta''$. Together with \eqref{localtheorycut1}, which in particular implies
\be{localtheoryA}
\theta(\eps(1+x_n))\ge \theta(\eps)\ge \eps,
\ee
 we arrive at
$$
|\nabla h_\eps(x,t)|\le  M(1+x_n)^{\beta}\Big(3+\beta +C_2 \Big)\theta(\eps (1+x_n))e^{\mu t^{1/p}}=:C_4(1+x_n)^{\beta}\theta(\eps (1+x_n))e^{\mu t^{1/p}},
$$
and 
$$
\Delta h\le 2n\eps+ M(1+x_n)^{\beta-1}\Big((1+\beta)\beta +2C_2\Big) \theta(\eps (1+x_n))e^{\mu t^{1/p}}=:2n\eps+C_5(1+x_n)^{\beta-1} \theta(\eps (1+x_n))e^{\mu t^{1/p}}.
$$
Thus choosing $C(p)=2pC_3$ in \eqref{localtheorymu}, we get
$$
\begin{aligned}
&C_3 \Big( C_1^{p-1}(1+x_{1,n})t_1^{-\frac{p-1}{p}}|\nabla h_\eps(x_1,t_1)|+|\nabla h_\eps(x_1,t_1)|^{p}\Big) -L h_\eps(x_1,t_1)\\
&\le (1+x_{1,n})^{\beta+1}\theta \Bigg\{C_3\Big( C_1^{p-1}t_1^{-\frac{p-1}{p}}C_4e^{\mu t_1^{\frac{1}{p}}}+C_4^p e^{p\mu t_1^{1/p}}\Big)-\frac{\mu}{p}t_1^{\frac{1-p}{p}}e^{\mu t_1^{\frac{1}{p}}} + C_5e^{\mu t_1^{\frac{1}{p}}}\Bigg\}+2n\eps\\
 &\le (1+x_{1,n})^{\beta+1}\theta e^{\mu t_1^{\frac{1}{p}}}\Bigg\{\Big( C_3 C_1^{p-1}C_4-\frac{\mu}{p}\Big) t_1^{-\frac{p-1}{p}}+C_3C_4^pe^{(p-1)\mu t_1^{1/p}}+ C_5\Bigg\}+2n\eps\\
&\le \theta(\eps(1+x_{1,n}))(1+x_{1,n})^{\beta+1}e^{\mu t_1^{\frac{1}{p}}}\Bigg\{-\frac{\mu}{2p}\tau^{-\frac{p-1}{p}}+C_3C_4^pe^{(p-1)}+ C_5\Bigg\}+2n\eps.\\
\end{aligned}
$$
 Next choosing $\tau\in(0,\mu^{-p}]$ sufficiently small such that
$$
-\frac{\mu}{2p}\tau^{-\frac{p-1}{p}}+C_3C_4^pe^{(p-1)}+ C_5\le -(2n+1),
$$
and using \eqref{localtheoryA}, we obtain $L Z_\eps(x_1,t_1)\le -(2n+1)\theta+2n\eps\le -\eps<0$,
which contradicts \eqref{localtheorytronc1}. 
Hence, there is a small $\tau>0$ such that $u\le v$ in $H\times[0,\tau]$

Set 
$$
\tilde{T}:=\sup\{\tau\in (0,T):\ u\le v\ \hbox{ in }  H\times[0,\tau]\}.
$$
 If $\tilde{T}<T$, we observe that the assumptions of Proposition~\ref{localtheoryuniqueness} are satisfied on $[\tilde{T},T)$ with $u(\cdot,\tilde{T})\le v(\cdot,\tilde{T})$. 
 By the above argument applied to $u(\cdot,\tilde{T}+t),\  v(\cdot,\tilde{T}+t)$,
 there exists $\tau>0$ such that $u\le v$ on $ H\times[\tilde{T},\tilde{T}+\tau]$. This contradicts the definition of $\tilde{T}$. Hence $\tilde{T}=T$, and Proposition~\ref{localtheoryuniqueness} is proved.
\end{proof}

\begin{proof}[Proof of Proposition~\ref{localtheoryuni}] Let $u_0\in X$. Let $\tau>0$ and $u_1,u_2\in Y_\tau$ be two solutions of \eqref{localtheoryeqE1b} with the same initial data $u_0$.
By Proposition~\ref{localtheoryuniqueness}, we directly obtain that $u_1= u_2$ in $ H\times(0,\tau)$. This concludes the proof. 
\end{proof}

\subsection{Nonexistence: proof of Theorem~\ref{localtheorynonexistence}}\label{localtheoryNE}

 Assume that there exists a classical solution $u\in C^{2,1}(Q_T)\cap C(\bar Q_T) $ of \eqref{localtheoryeqE1b} defined on $ H\times(0,T)$ for some $T>0$ with initial data $u_0$ as in the statement of the Theorem~\ref{localtheorynonexistence}.
 Set $\tau=T/2$, we now consider the self-similar solution from \cite[Theorem~4]{chso2}, given by
$$ 
v(x,t)=({  \tau}-t)^\gamma\phi(x_n/\sqrt{\tau-t})\quad\hbox{in }\  H\times(-\infty,\tau),
$$ 
where $\gamma=\frac{p-2}{2(p-1)}$ and the profile $\phi\in C^2([0,\infty))$ satisfies $\phi(0)=0$, $\phi'>0$ on $[0,\infty)$ and 
 $\displaystyle\lim_{y\to\infty} \frac{\phi(y)}{y^{\beta+1}}=L>0$.
 There exists $C:=C(\tau,\phi)>0$ such that 
 $$
v_0(x):=v(x,0)\le C(1+x_n)^{\beta+1},\quad \hbox{for }\ x\in  H.
$$ 
 In particular, there exists $R_0:=R_0(C)>0$ such that
 \be{localtheory7'}
u_{0}(x+R_0 e_n)\ge 2C (1+x_n)^{\beta+1}\ge v_0(x) \quad \hbox{in }\quad H.
 \ee
 Setting $\tilde{u}(x,t)=u(x+R_0e_n,t)$ for $(x,t)\in  H\times[0,T)$ and using $v\in Y_T$,
 Proposition~\ref{localtheoryuniqueness} yields $v\le \tilde u$ in $ \bar Q_\tau$.
 Since, for each $x\in  H$, $v(x,t)\to \infty$ as $t\to \tau$, this is a contradiction with the fact that $v\in C( \bar Q_\tau)$.

\qed

\subsection{Local existence part of Theorems~\ref{localtheorymaxsol}(i) and \ref{localtheorymaxsol1p2} for $p>1$}\label{localtheoryExistencesection}
 Let note that $u_0\in X$ implies that there exist $M_0\ge1$ such that
\be{localtheoryassumptu0}
0\le u_0(x)\le M_0(1+x_n)^{1+\beta}\quad \hbox{in } H
\ee
and, in addition, if  $u_0\in X_1$ then \eqref{localtheoryassumptu0} holds and there exist $0<M_1<c_p$ and $a\in(0,1)$ such that
\be{localtheoryassumptu01}
 u_0(x)\le M_1x_n^{1-\beta}\quad \hbox{in }\ \Gamma_a.
\ee
 Throughout this section, $C$ will denote a generic positive constant depending only on $n$ and $p$, unless otherwise specified.

\smallskip

\noindent{\bf Step 1. Regularization process.} 
 Let $\rho_k=\rho_k(x')$ be a standard mollifying sequence in the variables $x'\in\R^{n-1}$,
extend $u_0$ by $0$ for $x_n<0$, and then set
$$
u_{0,k}(x)=k\int_{x_n-k^{-1}}^{x_n}\int_{\R^{n-1}} \min(k,u_0(x'-y',y_n)) \rho_k(y') dy' dy_n.
$$
Then the  sequence 
$(u_{0,k})_k\subset BC^1_0(\bar  H)$ satisfies \eqref{localtheoryassumptu01} for every $u_{0,k}$, and 
\be{localtheory3.9'}
0\le u_{0,k}(x)\le 2 M_0(1+x_n)^{1+\beta},
\ee
and $u_{0,k}\to u_0$ as $k\to \infty$ locally uniformly on $\bar H$. 
 Now consider the problem of solving the equation
\begin{equation}\label{localtheoryeqE1C}
	\begin{cases}
	Lu=|\nabla u|^p,&x\in H,\ t>0,\\
	u=0,&x\in \partial H,\ t>0,\\
	u(0,x)=u_{0,k}(x),&x\in  H,
	\end{cases}
\end{equation} 
 Since $u_{0,k}\in BC_{0}^1(\bar H)$, this problem has a nonnegative classical solution $u_k\in C^{2,1}( H\times (0,T_k))\cap BC^1(\bar H\times [0,T_k))$ for some $T_k>0$ and $u_k\in BC^{2,1}(\bar H\times [t_1,t_2])$  for any $0<t_1<t_2<T_k$, cf. \cite[Section~35]{quittner2019superlinear}.
 Moreover, we have 
\be{localtheorystar1}
u_{k}(x,t)\le \|u_{0,k}\|_{L^\infty( H)}<\infty\quad \hbox{in } H\times[0, T_k).
\ee
 Indeed, since any constant is a solution of \eqref{localtheoryeqE0} and 
 \be{localtheory3.11'}
 (C^{2,1}(Q_\tau)\cap BC^1(\bar Q_\tau))\subset Y_\tau\quad \hbox{for } \ \tau>0,
 \ee
 the comparison principle Proposition~\ref{localtheoryuniqueness} 
 yields \eqref{localtheorystar1}.

 \smallskip

\noindent{\bf Step 2. Control of $u_k$ in $\R^n_+$}. 
Let 
\be{localtheorycondition1-1}
 T_0= \alpha^{p-1} M_0^{1-p}\quad\hbox{where }\ 
\alpha=\alpha(p)=\min\Bigg\{\Big[\frac{\beta}{2}(\beta+1)^{-p}\Big]^\beta,\Big(\frac{p-1}{2p}\Big)^\beta\Bigg\}.
\ee
We claim that
\be{localtheoryCC1}
u_k(x,t)\le 2^{1+\beta} M_0(1+x_n)^{1+\beta}\quad \hbox{ in }\quad  \R^n_+\times (0,\tau_k],\quad \hbox{with }\ \tau_k=\min\{T_k,T_0/2\}.
\ee

Set
\be{localtheorydefv1}
v_1(x,t):=\alpha(T_0- t)^{-\beta}(1+x_n)^{1+\beta}.
\ee
A direct computation yields
$$
Lv_1-|\nabla v_1|^p=\Bigg( (T_0-t)^{-1}\Big[\beta-\alpha^{p-1}(\beta+1)^p\Big]-p\beta^2(1+x_n)^{-2} \Bigg) v_1\ge\Big(\frac{(T_0)^{-1}}{2}-p\beta\Big)\beta v_1. 
$$
 Since $T_0\le (2p\beta)^{-1}$, this
 implies that 
$$
Lv_1\ge |\nabla v_1|^p,\ \  \hbox{for all $(x,t)\in Q_{T_0}$},
$$ 
and for all $x\in  H$
$$
v_1(x,0)=\alpha T_0^{-\beta}(1+x_n)^{\beta+1}\ge 2 M_0(1+x_n)^{\beta+1}\ge u_{0,k}(x).
$$
 By Proposition~\ref{localtheoryuniqueness} and \eqref{localtheory3.11'}, 
we deduce that 
$$
u_k(x,t)\le v_1(x,t),\quad x\in  H,\ 0<t<\min\{T_k,T_0\}.
$$
 This implies \eqref{localtheoryCC1}.

\smallskip

 \noindent{\bf Step 3. Refined control of $u_k$ in $\Gamma_a$ for $p>2$.} 
 By \eqref{localtheory3.9'} we have
$$
u_{0,k}(x)\le M_0 2^{2+\beta},\quad x\in \bar \Gamma_a.
$$
We shall prove that
\be{localtheoryCCC1}
u_k\le v_2\quad \hbox{in }\  \Gamma_a\times (0,\tau_k),
\ee
where $v_2$ will be a supersolution defined as
$$
v_2(x,t):=  C_0(p) M_0^p a^{-p}t+M_2 g(x_n), \quad (x,t)\in \Gamma_a\times(0,\tau_k),
$$
 where 
$$
M_2:=(M_1+c_p)/2\quad\hbox{ and }\quad C_0(p)>0 \ \hbox{to be chosen later},
$$
 and $g$ is the following piecewise function:
$$
g(s)=
\begin{cases}
\ s^{1-\beta},\ \hspace{3.6cm} \hbox{ for } s\le \rho_1\\
\ \rho_1^{1-\beta}+(1-\beta)\rho_1^{-\beta}(s-\rho_1),\ \ \hbox{ for } \rho_1< s\le a,
\end{cases}
$$ 
with $\rho_1\in (0,a/2)$  given by 
\be{localtheoryrho1.2}
\rho_1= C_1(p) M_0^{1-p}a^{p-1}.
\ee
We first note that $v_2(\cdot,t)\in C^1(\Gamma_a)$ and  $v_2(\cdot,t)\in C^2(\bar\Gamma_{\rho_1})\cap C^2(\bar\Gamma_{\rho_1,a})$, therefore $v_2(\cdot,t)\in H^2_{loc}(\bar \Gamma_a)$.

 On the other hand, 
  we have $g(s)\ge s^{1-\beta}$ on $[0,a]$, owing to $\frac{d}{ds}(g(s)-s^{1-\beta})\ge 0$ on $(\rho_1,a]$. 
 This along with
 \eqref{localtheoryassumptu01}
guarantees that 
$$
v_2(x,0)=  M_2 g(x_n)\ge   u_{0,k}(x),\quad \hbox{for all } x\in \bar\Gamma_{a}.
$$
Also since $u_k\in BC^1(\bar H\times (0,\tau_k))$ and $v_2\ge 0$,
 for any $\eps>0$, there exists $\eta_k>0$ such that 
$$
u_k-v_2\le \eps \quad \hbox{in }\ \bar\Gamma_{\eta_k}\times(0,\tau_k).
$$
Thus it remains to prove that, for any $\eps>0$ and its corresponding $\eta_k$, 
$$
u_k-v\le \eps \quad \hbox{in }\ \Gamma_{\eta_k,a}\times(0,\tau_k).
$$
 By \eqref{localtheoryrho1.2} with sufficient small $C_1(p)>0$, we have $ M_2g(a)\ge M_2 (1-\beta)\rho_1^{-\beta}a/2\ge c_p/2(1-\beta)C_1^{-\beta}(p) M_0/2\ge 2^{2+2\beta}M_0$. It then follows from \eqref{localtheoryCC1} that 
\be{localtheoryutil}
v_2(x',a,t)\ge M_2 g(a)\ge  u_k(x',a,t),\quad \hbox{for all }\  t\in (0,\tau_k),\ x'\in \R^{n-1}.
\ee
Moreover, a simple computation yields:
$$
\partial_t v_2(x,t)=C_0(p) M_0^pa^{-p},\quad |\nabla v_2(x,t)|= \begin{cases}
M_2(1-\beta) x_n^{-\beta},\  \hbox{ for } x_n\le \rho_1\\
M_2(1-\beta)\rho_1^{-\beta} \ \hbox{ for } \rho_1\le s\le a,
\end{cases}
$$
and 
$$
\Delta v_2(x,t)= \begin{cases}
-M_2\beta (1-\beta) x_n^{-\beta-1},\  \hbox{ for } x_n< \rho_1\\
0 \hspace{2cm} \hbox{ for } \rho_1< s\le a.
\end{cases}
$$
 Now choosing $C_0(p)=d_p^pC_1^{-p/(p-1)}$ and using $M_2< c_p$, we get that $L v_2\ge |\nabla v_2|^p$ in the weak sense.
Using \eqref{localtheoryutil}, the comparison principle \cite[Proposition 52.10]{quittner2019superlinear} (cf. Proposition~\ref{localtheory52.10} below) implies
$$
u_k(x,t)-v_2 (x,t)\le \eps\quad\hbox{for all } (x,t)\in \Gamma{\eta_k,a}\times (0,\tau_k).
$$
We already proved that, for any $\eps>0$,
$$
 u_k(x,t)-v_2 (x,t)\le \eps\quad \hbox{for all } (x,t)\in \Gamma{\eta_k}\times (0,\tau_k).
$$
We conclude the proof of \eqref{localtheoryCCC1} by letting $\eps\to 0$.

\smallskip

\noindent{\bf Step 4. Control of $\nabla u_k$ at $\partial H$  for $p>2$}.
 We shall prove that 
\be{localtheorycontrol1-1}
 |\nabla u_k(x',0,t)|\le d_pt^{-1/(p-2)} \quad\hbox{for all } (x',t)\in \R^{n-1}\times(0,\tilde{T}_k),
\ee
  where $\tilde{T}_k$ is defined by 
  \be{localtheorydeftildeT}
 \tilde{T}_k= \min\Big\{T_k,T_0/2,C(p)(c_p-M_1)^\frac{2(p-1)}{p}(a/M_0)^{2(p-1)}\Big\},
\ee
with some $C(p)>0$.

 Recall \eqref{localtheoryrho1.2} and let
\be{localtheoryrho}
\rho:=\min\Big\{\rho_1,C(p)(c_p-M_1)^{\frac{p-1}{p}} \Big\}=\min\Big\{C_1(p)(a/M_0)^{p-1},C(p)(c_p-M_1)^{\frac{p-1}{p}} \Big\}.
\ee
In order to prove  estimate \eqref{localtheorycontrol1-1}, we are going to construct a local barrier function, 
  namely, a supersolution 
 in $\Gamma_{\rho}\times (0,\tilde T_k)$ vanishing at $x_n=0$, modifying a device from \cite{PS_imrn16}.
   We look for a barrier function of the form   
     \be{localtheoryvopt}
  v(x,t)=c_p\Big(\left(x_n+t^{1/(1-\beta)}\right)^{1-\beta}-t-x_n^2/2\Big),\quad\hbox{ for $0\le x_n\le\rho$ and } t\in(0,\tilde T_k).
  \ee
 We compute 
 $$
 \partial_t v(x,t)=c_p\left(1+x_nt^{\frac{-1}{1-\beta}}\right)^{-\beta}-c_p,\quad \partial_{x_n}v(x,t)=c_p(1-\beta)\left(x_n+t^{\frac{1}{1-\beta}}\right)^{-\beta}-c_p x_n
 $$
 and 
 $$
 -\Delta v=\beta c_p(1-\beta)\left(x_n+t^{\frac{1}{1-\beta}}\right)^{-\beta-1}+c_p.
 $$
   Since $\rho\le (1-\beta)/2$ for $C(p)$ small in \eqref{localtheoryrho}, a simple calculation yields 
$$
v(x,t)\ge 0,\quad\partial_{x_n} v\ge 0,\quad\hbox{hence}\quad |\nabla v|^p\le \Big(c_p(1-\beta)\Big)^p\left(x_n+t^{\frac{1}{1-\beta}}\right)^{-p\beta}.
$$
 Using the definition of $c_p$ and $p\beta=\beta+1$, we have  for all $(x,t)\in \Gamma_\rho\times(0,\tilde{T}_k)$:
\be{localtheoryedp1}
\begin{aligned}
L  v=& c_p\left(1+x_nt^{\frac{-1}{1-\beta}}\right)^{-\beta}+\beta c_p(1-\beta)\left(x_n+t^{\frac{1}{1-\beta}}\right)^{-\beta-1}\\
\ge & \beta c_p(1-\beta)\left(x_n+t^{\frac{1}{1-\beta}}\right)^{-\beta-1}\ge |\nabla v|^p.
\end{aligned}
\ee
Also by using \eqref{localtheoryassumptu01}, \eqref{localtheoryrho} with sufficiently small $C(p)$,  we obtain, for all $x\in \Gamma_\rho$,
$$
v(x,0)=c_p(x_n^{1-\beta}-x_n^2/2)\ge u_{0,k}(x).
$$
We shall apply Proposition~\ref{localtheoryuniqueness} together with Remark~\ref{localtheoryuni+1}(ii) in $\Gamma_{\rho}\times(0,\tilde T_k)$. To this end, we also need the following control  
$$
u_k(x',\rho,t)\le v(x',\rho,t)\quad \hbox{ for all}\ t\in (0,\tilde T_k),\quad x'\in \R^{n-1}.
$$
By the conclusion \eqref{localtheoryCCC1} of Step 3 (where $\tau_k$ is defined in \eqref{localtheoryCC1}) and the fact that $\rho\le \rho_1$, it suffices 
to have 
$$
v(x',\rho,t)\ge v_2(x',\rho,t)=C_0(p)M_0^pa^{-p}t+M_2 \rho^{1-\beta} \quad  \hbox{in }\  (0,\tilde T_k).
$$
 For sufficiently small $C(p)>0$ in \eqref{localtheorydeftildeT}, which in particular implies $\tilde{T}_k\le C(p) (a/M_0)^p(c_p-M_1)\rho^{1-\beta} $, a simple calculation shows that
$$
v(x',\rho,t)\ge {  c_p(\rho^{1-\beta}-\tilde T_k-\rho^2/2)\ge v_2(x',\rho,\tilde T_k)\ge v_2(x',\rho,t).}
$$
Therefore, by \eqref{localtheoryedp1} and Proposition~\ref{localtheoryuniqueness}, we obtain
\be{localtheoryconcl1}
u_k(x,t)\le v(x,t)\quad\hbox{in }\ \Gamma_{\rho}\times(0,\tilde T_k).
\ee
A direct consequence of \eqref{localtheoryconcl1} is that 
$$
|\partial_{x_n} u_k(x',0,t)|\le \partial_{x_n} v(x',0,t)\le d_p t^{-1/(p-2)}\quad \hbox{ for  all } x'\in \R^{n-1},\ t\in (0,\tilde T_k).
$$
Moreover, since $u_k= 0$ on $\partial H$, this implies \eqref{localtheorycontrol1-1}.

\smallskip
\noindent {\bf Step 5. Control of  $\nabla u_k$ at $\partial H$  in the case $1<p\le2$}.
 We claim that 
\be{localtheoryCC1p12}
|\nabla u_k(x',0,t)|\le C(n,p,M_0) t^{-1/2},\quad\hbox{ for }\ 0<t<\tilde{T}_k:=\min\{T_0/2,T_k\}, \ x'\in \R^{n-1}.
\ee
As in the previous step, we proceed by the barrier approach.
Here we define 
$$
v(x,t)=\log(1+U(x_n,t))\quad \hbox{in}\quad \Gamma_1\times(0,1),
$$
where  $U(y,t)$ is the solution of the one dimensional problem
$$
U_t-U_{yy}=1, \quad U(y,0)=K_1:= \exp (2^{2(\beta+1)}M_0)-1,\quad U(0,t)=0,\quad U(1,t)=K_1.
$$
  Note that $\tau_k\le 1$ by \eqref{localtheorycondition1-1}, \eqref{localtheoryCC1}.
By a simple computation, we see that $v_t-\Delta v\ge |\nabla v|^p$ in  $\Gamma_1\times(0,1)$ and 
 $v\ge u_k$ on the parabolic boundary of $ \Gamma_1\times(0,\tau_k)$ owing to \eqref{localtheory3.9'}, \eqref{localtheoryCC1}. The standard comparison principle yields $u_k\le v $ on $ \Gamma_1\times(0,\tau_k)$.

 Set $h(y)=K_1y$ and $V(y,t)=U(y,t)-h(y)$. We have
$$
V_t-V_{yy}=1, \quad V(y,0)=K_1(1-y),\quad U(0,t)=0,\quad U(1,t)=0,
$$
and by the variation of constants formula (Duhamel formula), we get
$$
V(y,t)=  e^{tA}V(y,0)+\int_0^t e^{(t-s)A}1 ds
$$
 where $A$ is the 1d Dirichlet Laplacian on  $(0,1)$. Hence by using $V(y,0)\le K_1$ we arrive at
$$
U(y,t)\le K_1 e^{tA}1+K_1y+\int_0^t e^{(t-s)A}1 ds.
$$ 
 Therefore, by standard estimates for the heat equation (see, e.g., \cite[Proposition~49.9]{quittner2019superlinear}), and using $\log(1+s)\le s$ for $s\ge 0$, we obtain 
\be{localtheoryfinal2}
u_k(x,t)\le v(x,t)\le U(x_n,t)\le C\Big(K_1t^{-1/2}+K_1+t^{1/2})\Big)x_n\quad \hbox{in}\quad \Gamma_1\times(0,1).
\ee
 Recalling \eqref{localtheoryCC1}, we then get that 
\be{localtheory3.x}
|\partial_{x_n}u_k(x',0,t)|\le C\Big(K_1t^{-1/2}+K_1+t^{1/2})\Big)\le C(n,p,M_0) t^{-1/2},\quad\hbox{ for }\ 0<t<\tilde{T}_k,
\ee
hence \eqref{localtheoryCC1p12}.

\noindent{\bf Step 6. Gradient estimate in $\Gamma_1$ and conclusion.} We shall prove that 

\be{localtheoryCC2}
|\nabla u_k(x,t)|\le C(n,p,M_0)(1+t^{-\gamma})\quad \hbox{in }\ \bar \Gamma_1\times (0,\tilde T_k),
\ee
 where $\gamma=1/(p-2)$ (resp.  $1/p$) and $\tilde T_k$ is defined by \eqref{localtheorydeftildeT} (resp. \eqref{localtheory3.x}) for $p> 2$ (resp. $p\in(1,2]$). 
Using $u_k\ge 0$, \eqref{localtheoryCC1}, \eqref{localtheorycontrol1-1} for $p>2$ and \eqref{localtheoryCC1p12} in the case $1<p\le 2$, 
we derive \eqref{localtheoryCC2} as consequence of the following property.

\begin{lem}\label{localtheoryBernchso}
Let $p>1$ and $u\ge0$ be a  classical solution of \eqref{localtheoryeqE0} in $ Q:=(B_1\cap\R^n_+)\times (0, T)$, which satisfies 
\be{localtheoryhypoth}
u\le M\ \hbox{in }\ Q ,\quad |\nabla u|\le \tilde M t^{-\gamma_0} \ \ \hbox{ and $\ u=0$ on }\ Q\cap \{x_n=0\},
\ee
 for some $\gamma_0, M,\, \tilde{M}>0$. Then there exists $C_0:=C_0(n,p,M,\tilde M)>0$ such that
 $$
|\nabla u|\le C_0(1+ t^{-\bar\gamma}), 
\quad \hbox{in } (B_{1/2}\cap\overline{\R^n_+})\times (0 , T)
$$
 with $\bar\gamma=\max(\gamma_0,1/p)$.
\end{lem}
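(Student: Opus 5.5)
The plan is to combine a boundary estimate, obtained from the hypothesis on $\partial H$ by a maximum principle argument, with the interior Bernstein estimate \eqref{localtheorygrabound} (Theorem~\ref{localtheorypropBern}), which gives $|\nabla u|\le C(n,p)\big(t^{-1/p}+ d^{-1}\sup u+ d^{-1/(p-1)}\big)$ at distance $d$ from the parabolic boundary. Interior points are handled directly: for $x\in B_{1/2}\cap\R^n_+$ with $x_n\ge 1/8$ and $t>0$, apply \eqref{localtheorygrabound} in the ball $B_{1/8}(x)\subset Q$ with the bound $u\le M$. This gives $|\nabla u(x,t)|\le C(n,p,M)(1+t^{-1/p})$. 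The remaining, and main, task is the boundary layer $0<x_n<1/8$.

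For the boundary layer I would run a localized Bernstein argument on $w=|\nabla u|^2$. Differentiating the equation gives
$$
w_t-\Delta w+2|D^2u|^2=p|\nabla u|^{p-2}\nabla u\cdot\nabla w .
$$
Take a cutoff $\psi(x)$ supported in $B_1$, equal to $1$ on $B_{1/2}$, with $|\nabla\psi|^2\le C\psi$ and $|\Delta\psi|\le C$. Following the usual device for Dirichlet data (as in \cite{SZ} or \cite[Section~40]{quittner2019superlinear}), consider
$$
z=\psi w\,\zeta(t)+K u^2
$$
or alternatively $z=\psi\, w\, \zeta(t)$ with $\zeta(t)=\min(1,t)^{2\bar\gamma}$. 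Suppose the maximum of $z$ over $\overline{Q}\cap\{t\le \tau\}$ is attained at an interior point. The standard computation uses $|D^2u|^2\ge (\Delta u)^2/n\ge (|\nabla u|^p-u_t)^2/n$ together with Young's inequality to absorb the terms $\nabla\psi\cdot\nabla w$ and the drift term. This yields $|\nabla u|^{2p}\psi\zeta\lesssim w\,\zeta'+C w$ and hence a bound $z\le C$. Here the bounded quantity $u\le M$ enters through the $Ku^2$ term, controlling $u_t$-type contributions.

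Since $u=0$ on $\{x_n=0\}$ and $\psi$ vanishes on $\partial B_1$, the maximum of $z$ may instead occur on the flat part of the boundary, or it may arise in the limit $t\to0$. On the flat boundary the hypothesis $|\nabla u|\le \tilde M t^{-\gamma_0}$ gives $\psi w\zeta\le \tilde M^2 t^{2\bar\gamma-2\gamma_0}\le \tilde M^2$, because $\bar\gamma\ge\gamma_0$. The weight $\zeta$ kills $t=0$, using the same hypothesis near $t=0$. Combining the two alternatives, $z\le C(n,p,M,\tilde M)$, that is, $|\nabla u|\le C_0(1+t^{-\bar\gamma})$ on $B_{1/2}$.

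The main obstacle is justifying that $z$ attains its maximum and controlling the drift term $p|\nabla u|^{p-2}\nabla u\cdot\nabla\psi\, w$. The latter is of order $|\nabla u|^{p+1}\psi^{1/2}$ and must be absorbed by $\psi|\nabla u|^{2p}/n$. By Young's inequality this requires $|\nabla u|^{p+1}\psi^{1/2}\le \eta\psi |\nabla u|^{2p}+C_\eta |\nabla u|^{2}$, which holds since $p+1<2p$. The leftover term $C|\nabla u|^2$ is harmless. Should the direct Bernstein computation at the corner become messy, I would fall back on a simpler route. I would flatten the problem by odd reflection, which is not an exact solution, so instead I would bound $\partial_{x_n}u$ near the boundary by comparison. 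On $\{x_n=0\}$ the tangential derivatives vanish, so $w=(\partial_{x_n}u)^2$ there, and the boundary bound is exactly the hypothesis. Combined with the interior estimate on $\{x_n=1/8\}$, the maximum principle for $w$ in the strip, written in the form above with the cutoff, then closes the argument.
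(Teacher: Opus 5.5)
\textbf{There is a genuine gap in the boundary-layer step, which is the whole content of the lemma.}

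Your Bernstein computation for $z=\psi\zeta w$, with $w=|\nabla u|^2$, needs a coercive term $\psi|\nabla u|^{2p}$. That term is what would absorb the drift contribution $w\,b\cdot\nabla\psi$ (of size $|\nabla u|^{p+1}|\nabla\psi|$) and the term $\psi\zeta' w$. You extract it from $|D^2u|^2\ge(\Delta u)^2/n=(|\nabla u|^p-u_t)^2/n$. But this only gives $|\nabla u|^{2p}/(2n)-u_t^2/n$, and nothing in the hypotheses controls $u_t$.

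The extra term $Ku^2$ does not repair this. With $\mathcal{L}=\partial_t-\Delta-b\cdot\nabla$ and $b=p|\nabla u|^{p-2}\nabla u$, one has $\mathcal{L}(u^2)=-2(p-1)u|\nabla u|^p-2|\nabla u|^2$. This is a gain of order $|\nabla u|^p$ only, and it vanishes at $x_n=0$ where $u=0$. So it can neither absorb $|\nabla u|^{p+1}$ nor say anything about $u_t$. The paper does use exactly your direct argument in Step~2 of the proof of \eqref{localtheoryuppergradbound}, but only after the bound \eqref{localtheoryclaim1.0} on $u_t$ has been established.

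Your fallback does not avoid the problem. The layer $\{0<x_n<1/8\}\cap B_1$ has a lateral boundary reaching $\{x_n=0\}$, where the interior estimate degenerates like $x_n^{-1}$. So a lateral cutoff is still needed, and it brings back the same $|\nabla u|^{p+1}|\nabla\psi|$ term.

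There is also a secondary issue at $t=0$. The gradient hypothesis holds only on the flat boundary, so the claim that $\zeta\psi w\to 0$ as $t\to0$ is not justified inside $Q$.

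\textbf{The missing idea} is the nonlinear change of unknown used in the paper, following \cite[Theorem~8]{chso2}. Set $v=h(M-u)$ with $h(s)=\int_0^s d\sigma/g(\sigma)$, $g(s)=R^{-1}s+Ks^{1/p}$ and $K=R^{-2/p}+\tau^{-1/p}$. Then run the Bernstein argument on $z=\eta|\nabla v|^2$, where $\eta=\eta_1(t)\tilde\eta(x)$ and $\eta_1(\tau/4)=0$.

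The transformation produces the term $\eta\,\mathcal{N}w$ in \eqref{localtheoryeqL}. This term supplies superlinear damping in $w$ without any information on $u_t$; this is where $u\le M$ is really used. It yields $\mathcal{L}z\le 0$ on $\{z\ge A(n,p)\}$.

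On the flat boundary $u=0$, so $|\nabla v|=|\nabla u|/g(M)\le \tilde M(\tau/4)^{-\gamma_0}/g(M)$. The space-time cutoff vanishes on the rest of the parabolic boundary of $(B_{3R/4}\cap\R^n_+)\times(\tau/4,\tau]$, including the initial time, so $t\to0$ is never touched. The maximum principle then gives $|\nabla u|=|\nabla v|\,g(M-u)\le C\bigl(g(M)+\tilde M\tau^{-\gamma_0}\bigr)$. Taking $R=1$ and $\tau=t$ yields the claim.

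Your treatment of the region $x_n\ge 1/8$ via \eqref{localtheorygrabound} is correct, but it becomes unnecessary once this argument is in place.
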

The proof of this Bernstein type property is similar to that of \cite[Theorem~8]{chso2}. A short proof is provided in Appendix~\ref{localtheorylemma3.1} for convenience of readers.

Therefore, by \eqref{localtheorystar1}, \eqref{localtheoryCC2} in $\Gamma_1$ and the Bernstein type estimate \eqref{localtheorygrabound} for $x_n\ge 1/2$, we have
\be{localtheoryk_0}
\|u_{k}(\cdot,t)\|_{BC^1( H)}\le \|u_{0,k}\|_{L^\infty( H)}+C  (1+ t^{-\gamma}),\quad 0< t <
\tilde{T}_k.
\ee
Since $T_k<\infty$ implies 
$$
\|u_{k}(\cdot,t)\|_{BC^1( H)}\to \infty, \quad \hbox{as }\ t\to T_{k},
$$
then \eqref{localtheoryk_0} immediately implies $T_k\ge T_0/2$ for $1<p\le 2$ and 
$$
T_k\ge \min\{T_0/2,C(p)(c_p-M_1)^\frac{2(p-1)}{p}(a/M_0)^{2(p-1)}\}
$$
 in the case of $p>2$. 
  Recall \eqref{localtheorycondition1-1}, for $C(p)$ small enough, we have 
 $$
 C(p)(c_p-M_1)^\frac{2(p-1)}{p}(a/M_0)^{2(p-1)}\le \frac{\alpha^{p-1}M_0^{1-p}}{2}= T_0/2
 $$
  owing to
$M_0\ge 1$ and $a\le 1$, we then deduce that
\be{localtheorytempscourt}
\tilde{T}_k\ge  T:=\begin{cases}
T(p,M_0)=C(p)M_0^{1-p}\quad \hspace{4cm}\hbox{if }\ 1<p\le2,\\
\vspace{2pt}
 T(p,a,M_0,M_1)=C(p)(c_p-M_1)^\frac{2(p-1)}{p}(a/M_0)^{2(p-1)}\quad \hbox{if }\ p>2.
\end{cases}
\ee  

 Now, by \eqref{localtheoryCC1}, the uniform gradient bounds \eqref{localtheoryCC2} for $x_n<1$ and  \eqref{localtheorygrabound} for large $x_n$, together with  interior and boundary parabolic estimates and standard embeddings, the sequence $(u_k)_k$ is relatively compact in $C^{2,1}_{loc}( Q_T)$. Hence, there exists a subsequence of $(u_k)_k$, which converges to  $u$ as $k\to \infty$ in $C^{2,1}_{loc}(Q_T)$. Moreover, $u\in C^{2,1}( Q_T)$, $u\ge0$ and u solves \eqref{localtheoryeqE1b} in the classical sense. 
By passing to the limit $k\to \infty$ in the uniform bound \eqref{localtheoryCC1}, we obtain 
$$
(1+x_n)^{-1-\beta}u \in L^{\infty}( H\times (0,T)).
$$
In addition, using the latter together with \eqref{localtheoryCC2} and \eqref{localtheorygrabound}, we conclude that 
\be{localtheoryconclu2}
|\nabla u(x,t)|\le C(n,p,M_0)(1+x_n)^\beta (1+ t^{-\gamma})\quad\hbox{in }\  H\times(0,T).
\ee
To close the proof of the local existence in the class $Y_T$, it subsequently remains to show that $u$ can be extended as $t\to0^+$ in such a way that 
\be{localtheorycontinuit}
u\in C(\bar  H\times[0,T))\quad\hbox{ and }\  u(x,0)=u_0(x).
\ee
 First, 
by letting $k\to \infty$ in \eqref{localtheoryfinal2} for $p\in(1,2]$ or in \eqref{localtheoryconcl1} for $p>2$ and using \eqref{localtheoryvopt}, we obtain $u\in C(\bar  H\times (0,T])$ and 
\be{localtheoryfinal1}
\sup_{\Gamma_r\times(0,T)}u\to 0\quad\hbox{as }\ r\to 0.
\ee
To check the continuity as $t\to 0$, we 
let $x_0\in H$, $t_0\in (0,T)$, $\eps>0$ and $\delta_0=x_{0,n}/2$.
We already know that 
\be{localtheory3.32'}
u_k\le C(n,p,x_{0,n},t_0)=C \quad \hbox{in }\  Q_{\delta_0} =B_{\delta_0}(x_0)\times [0,t_0].
\ee
 For suitable $\delta\in (0,\delta_0)$, we shall compare $u_k$ with the function
$$
W_1(x,t)=u_0(x_0)+\eps+d_1 t+\frac{C}{\delta^2} |x-x_0|^2\quad \hbox{with $d_1=\frac{2C}{\delta^2}+\Big(\frac{2C}{\delta}\Big)^p$ in }\ Q_\delta.
$$
Using the continuity of $u_0$ and the local uniform convergence of $(u_{0,k})_k$ to $u_0$ on $\bar H$, there exist $\delta\in (0,\delta_0)$ and $k_0:=k_0(\delta)>0$ such that 
$$
u_{0,k}(x)\le u_0(x_0)+\eps\quad\hbox{in }\ B_\delta(x_0)
$$
for any $k\ge k_0$. Therefore, we have $W_1(x,0)\ge u_{0,k}(x)$ in $B_\delta(x_0)$ and 
by definition of $W_1$ and \eqref{localtheory3.32'} we have $W_1\ge u_k$ in $ (\partial B_\delta(x_0))\times (0,t_0)$.
Moreover, 
a simple calculation yields
$$
\begin{aligned}
L W_1= d_1-\frac{2C}{\delta^2}=\Big(\frac{2C}{\delta}\Big)^p\ge|\nabla w_1|^p\quad \hbox{in } \ Q_\delta.
\end{aligned}
$$
By the standard comparison principle we have 
$$
u_k(x,t)\le W_1(x,t)\quad\hbox{in }\ Q_\delta.
$$
By the same way, we obtain 
$$
u_0(x_0)-\eps-d_2 t-\frac{C}{\delta^2} |x-x_0|^2\le u_k(x,t)\quad \hbox{in }\ Q_\delta.
$$
 Letting $k\to \infty$, we
in particular get
$$
u_0(x_0)-\eps-d_2 t\le u(x_0,t)\le u_0(x_0)+\eps+d_1 t\quad \hbox{in }\ [0,t_0].
$$
Since $\eps>0$ is arbitrary, we may let $\eps\to 0$ and then $t\to 0$. This combined with \eqref{localtheoryfinal1} implies \eqref{localtheorycontinuit}.

\subsection{Maximal solution: completion of proof of Theorems~\ref{localtheorymaxsol} and \ref{localtheorymaxsol1p2}}\label{localtheorysectionmax}
\noindent $\bullet$ {\bf Proof of Theorem~\ref{localtheorymaxsol} (i)-(ii) and Theorem~\ref{localtheorymaxsol1p2}(i)-(ii).} Let $p>2$, $u_0\in X_1$ and assume that $u_1$ and $u_2$ are two solutions of \eqref{localtheoryeqE1b} on $[0,T_1)$ and $[0,T_2)$, respectively. Since $X_1\subset X$, then $u_1=u_2$ on $[0,\min\{T_1,T_2\})$ due to the uniqueness (Proposition~\ref{localtheoryuniqueness}).
Now we define $\{u_\alpha: [0,T_\alpha)\to X\}$, the set of all solutions of \eqref{localtheoryeqE1b}, which is nonempty by the local existence part proved in Section~\ref{localtheoryExistencesection}, and $\tau=\sup T_\alpha$. We also define $u:[0,\tau)\to X$ by $u(t)=u_\alpha(t)$, where $\alpha$ is any index such that $T_\alpha>t$. Then $u$ is
obviously a solution of \eqref{localtheoryeqE1b} on $[0,\tau)$ and, together with \eqref{localtheoryconclu2}, properties $(i)$ and $(ii)$ of the Theorem~\ref{localtheorymaxsol} are verified. Arguing similarly, we obtain the first assertion of Theorem~\ref{localtheorymaxsol1p2}.

\smallskip

\noindent $\bullet$ {\bf Proof of blow-up alternative for $p>1$}.
 Assume by contradiction that $\tau<\infty$ and that
$$
\begin{cases}
\liminf_{t\to \tau} \|u(\cdot,t)\|_{L_w^\infty( H)}<\infty\quad\hbox{if }\ 1< p\le2\\
\vspace{2pt}
 \liminf_{t\to \tau} \|u(\cdot,t)\|<\infty\quad\hbox{if }\ p>2.
 \end{cases}
$$
Then there exist $C>0$ and $(t_k)_k$ such that $t_k\to \tau$ and that 
for all $k$ we have 
$$
u(x,t_k)\le C(1+x_n)^{\beta+1} \quad  x\in  H.
$$
Moreover, in the case of $p>2$, the sequence $(t_k)_k$  may be chosen so that
$$
u(x,t_k)\le \frac{c_p}{2}x_n^{1-\beta}\quad \hbox{in }\ \Gamma_a \quad\hbox{with }\ a=\min\{1,(c_p/(2C))^{p-1}\}.
$$
 Therefore, by the local existence part of Theorems~\ref{localtheorymaxsol} and \ref{localtheorymaxsol1p2}, already proved in Section~\ref{localtheoryExistencesection}, there exists  $T=T(p,C)>0$  (cf. \eqref{localtheorytempscourt}) independent of $k$ such that the problem \eqref{localtheoryeqE1b} with initial data $u(t_k)$ possesses a unique solution $u_k$ on $[0,T)$ for all $k\in \N$.
  Fix $t_k\in ((\tau-T)_+,\tau)$ and set 
$$
\tilde u(t):=
\begin{cases}
u(t),\hspace{2cm} t\in [0,t_k]\\
u_k(t-t_k),\quad t\in [t_k,t_k+T].
\end{cases}
$$
 By uniqueness $u(t)=u_k(t-t_k)$ for $t\in[t_k,\tau)$.
Then $\tilde u$ is a solution of \eqref{localtheoryeqE1b} on $[0,t_k+T)$ and $t_k+T>\tau$, which contradicts the
definition of $\tau$. \qed

\section{ Blow-up behaviors: proof of Theorem~\ref{localtheoryonlyGBUpossible}}\label{localtheoryBA}
\begin{proof}[Proof of  Theorem~\ref{localtheoryonlyGBUpossible}$(i)$]
 Let $u_0\ge 0$ be as in the statement of Theorem~\ref{localtheoryonlyGBUpossible}. Let $u\in Y_T$ be the corresponding maximal solution with $T:=T_{\max}(u_0)<\infty$. 

  We define 
$$
v(x,t)=M(t+1)(A(t)+x_n)^\gamma,\quad A\in C^1([0,\infty)),\ A\ge 1 \ \hbox{ and } A'(t)>0.
$$
 Using $0<\gamma<\beta+1$, a simple calculation yields
$$
\begin{aligned}
(L v-|\nabla v|^p)(t+1)v^{-1}\ge&1-\gamma(\gamma-1)(A(t)+x_n)^{-2}(t+1)-\gamma^{ p} M^{p-1}(t+1)^p(A(t)+x_n)^{\gamma(p-1)-p}\\
\ge& 1-\gamma(\gamma-1)A(t)^{-2}(t+1)-\gamma^{ p} M^{p-1}(t+1)^pA(t)^{\gamma(p-1)-p}.
\end{aligned}
$$   
Hence, choosing 
$$
A(t)=\max\Big\{1,\sqrt{2\gamma(\gamma-1)_+},(2\gamma M^{\frac{p-1}{p}})^{\frac{p}{p-\gamma(p-1)}}\Big\}(t+1)^{\frac{p}{p-\gamma(p-1)}}=:C(M,p)(t+1)^{\frac{p}{p-\gamma(p-1)}}
$$ 
and using $p/(p-\gamma(p-1))\ge 1$, we obtain
$H v\ge|\nabla v|^p$. 
Since $u\in Y_T$ and $u_0\le v(\cdot,0)$, the comparison principle  Proposition~\ref{localtheoryuniqueness} yields 
$$
u(x,t)\le v(x,t)\le M(t+1)A(t)^\gamma(1+x_n)^\gamma=C(M,p)(t+1)^{1+\frac{p\gamma}{p-\gamma(p-1)}}(1+x_n)^\gamma\quad\hbox{in }\quad  H\times(0,T).
$$
This concludes the proof of \eqref{localtheoryC1-1double}.

\end{proof}

\begin{proof}[Proof of \eqref{localtheorygradlowerbound} in Theorem~\ref{localtheoryonlyGBUpossible}$(ii)$]
{\bf Step 1.} {\it Upper estimates of $|u_t|$ and $|\nabla u_t|$.}
Using Bernstein estimate \eqref{localtheorygrabound} and estimate \eqref{localtheoryC1-1double}, we get 
\be{localtheoryutil1-0}
|\nabla u(x,t)|\le C (1+x_n)^{\gamma-1}=:\phi(x_n)\quad \hbox{in }\ \big( H\setminus \Gamma_{1/2}\big)\times [T/8,T).
\ee
We claim that 
 \be{localtheorycutancienb}
|D^2 u(x,t)|+|u_t(x,t)| \le C (1+x_n)^{ p\gamma} ,\ \ \hbox{ in }\   \big( H\setminus \Gamma_{1}\big)\times[T/2,T).
 \ee
 Let $(\xi,t_0)\in\R^{n-1}\times[T/2,T)$ and $y\ge 1$. Fix $\eps\in \Big(0,\frac{ \min\{1,\sqrt{T}\}}{2}\Big)$. We denote 
 $$
 x_0=(\xi,y)\quad 
\hbox{and }\quad D:=B_\eps(x_0) \times[t_0-\eps^2,t_0].
$$
From \eqref{localtheoryC1-1double} and \eqref{localtheoryutil1-0}, we have 
$$0\le u\le C(1+y)^\gamma=:f_1(y)\quad \hbox{and } \quad |\nabla u|\le f_2(y):=\underset{ y-1/2\le s\le y+1/2}{\sup}\phi(s)\quad \hbox{in } D.$$
Setting $D':=B_{\eps/2}(x_0)\times [t_0-\eps^2/2,t_0]$, for each $q\in(1,\infty)$, by interior $L^q$ parabolic regularity, we have
 $$\begin{aligned}
 \|u\|_{W^{2,1;q}(D')}
 &\le C(n,q)\Big(\eps^{-2}\|u\|_{L^q(D)}+\||\nabla u|^p\|_{L^q(D)}\Big)\le  C(n,q,\eps)\Big(\|u\|_{L^\infty(D)}+\||\nabla u|^p\|_{L^\infty(D)}\Big)\\
 &\le \hat f(y):=C(n,q,\eps)(1+y)^\gamma,\hspace{1cm} \hbox{due to }\ p(\gamma-1)\le\gamma.
  \end{aligned}$$
Then taking $q=q(n)$ large and using standard imbeddings, we get
$$\||\nabla u|^p\|_{C^{1/2,1/4}(D')}\le C(n,p)\|u\|^p_{C^{3/2,1/4}(D')}\le C(n,p,\eps)\|u\|^{ p}_{W^{2,1;q}(D')}\le C(n,p,\eps) \hat f ^{ p}(y)$$
hence, by interior Schauder parabolic regularity, 
$$|u_t(\xi,y,t_0)|+|D^2 u(\xi,y,t_0)|\le C(n,\eps)(\|u\|_{L^\infty(D')}+\||\nabla u|^p\|_{C^{1/2,1/4}(D')})\le C(n,p,\eps) (1+y)^{ p\gamma},
$$
  which yields \eqref{localtheorycutancienb}.

We next claim that 
\be{localtheoryutil1-1}
|D u_t|\le C(1+x_n)^{ p\gamma+1},\quad \hbox{in }\ \big( H\setminus \Gamma_{1}\big)\times[T/2,T).
\ee
Let $w=\partial_{x_i} u$. By parabolic regularity, we have $w\in C^{2,1}_{loc}( H\times(0,T))$ and 
\be{localtheoryeqw}
w_t-\Delta w=b\cdot \nabla w=:F(w),\quad \hbox{with }\ b:=p|\nabla u|^{p-2}\nabla u.
\ee
Using \eqref{localtheoryutil1-0}-\eqref{localtheorycutancienb} and \eqref{localtheoryC1-1double}, we may apply interior $L^q$ and Schauder parabolic regularity and standard imbeddings similarly as above to deduce the claim.

\smallskip

\noindent{\bf Step 2.} {\it Heat semigroup regularisation in a half-space.} Let $\varphi\in L_{loc}^\infty(\bar H)$ satisfy, $|\varphi(x)|\le (1+x_n)^q$ with $q>0$. We claim that 
\be{localtheorysemigroup-prop}
|S(t)\varphi (x)|\le C(q,n)(1+x_n)^q(1+t^{q/2}),
\ee
and 
\be{localtheorynablast}
|\nabla S(t)\varphi (x)|\le C(q,n) t^{-1/2}(1+x_n)^q(1+t^{q/2})
\ee
for all $t>0$, where $(S(t))_t$ denotes the heat semigroup in $ H$ with Dirichlet boundary condition.
Using the form of $S(t)$, we have
$$
\begin{aligned}
|S(t)\varphi (x)|&\le C(n)\int_{ H}  t^{-n/2}e^{-|y-x|^2/4t}(1+y_n)^qdy\le C(n)\int_{\R^{n-1}}\int_{-x/(2\sqrt{t})}^\infty e^{-|z|^2}(1+x_n+2z_n\sqrt{t})^qdz\\
&\le C(n,q)\Big((1+x_n)^q\int_{\R^n} e^{-|z|^2}dz+(\sqrt{t})^q\int_{\R^n} |z|^q e^{-|z|^2}dz\Big),
\end{aligned}
$$
hence \eqref{localtheorysemigroup-prop}.
For \eqref{localtheorynablast}, we use the Dominated convergence theorem to obtain 
$$
\begin{aligned}
|\nabla S(t)\varphi (x)|&\le C(n)\int_ H  |x-y|t^{-(n+2)/2}e^{-|y-x|^2/4t}(1+y_n)^qdy\\
&\le C(n)t^{-1/2}\int_{\R^{n-1}}\int_{-x/(2\sqrt{t})}^\infty |z|e^{-|z|^2}(1+x_n+2|z_n|\sqrt{t})^qdz\\
&\le C(n,q)t^{-1/2}\Big((1+x_n)^q\int_{\R^n}|z| e^{-|z|^2}dz+(\sqrt{t})^q\int_{\R^n} |z|^{q+1} e^{-|z|^2}dz\Big).
\end{aligned}
$$

\noindent{\bf Step 3.} {\it Proof of \eqref{localtheorygradlowerbound}.} 
 We use a modification of the proof in \cite{PS_jmpa18} (see also \cite[Theorem~40.18*]{quittner2019superlinear}) for the bounded domain case.
From now, we denote 
$$
m(t):=\sup_{\Gamma_1}|\nabla u (\cdot,t)|, \quad T/2<t<T.
$$
$\bullet$ We first claim that $m$ is locally Lipschitz on $(T/2,T)$ and that
\be{localtheory40.40}
|m'(t)|\le \sup_{\Gamma_1}|\partial_t\nabla u(\cdot,t)|\quad \hbox{for } a.e.\quad  t\in (T/2,T).
\ee
Let $0<s<t<T$ and set $Q_{s,t}=\Gamma_1\times(s,t)$. For any $x\in \Gamma_1$, it follows from the mean value theorem that
$$
|\nabla u(x,s)|-|\nabla u(x,t)|\le |\nabla u(x,s)-\nabla u(x,t)|\le |s-t|\sup_{Q_{s,t}}|\partial_t \nabla u|
$$
hence 
$$
|\nabla u(x,s)|\le m(t)+ |s-t|\sup_{Q_{s,t}}|\partial_t \nabla u|
$$
Taking supremum for $x$ over $\Gamma_1$ and then exchanging the roles of $t$ and $s$, we get
\be{localtheory40.40a}
|m(s)-m(t)|\le |s-t|\sup_{Q_{s,t}}|\partial_t \nabla u|.
\ee
 Using the fact that $\nabla u_t\in L^\infty( \Gamma_1\times[s,t])$, it follows that the function $m$ is locally Lipschitz. 
Dividing \eqref{localtheory40.40a} by $t-s$, passing to the limit $s\to t$ with
fixed $t\in (T/2,T)$, and using the continuity of $\partial_t \nabla u$, the claim follows.

\noindent$\bullet$ We next claim that
\be{localtheory40.41}
\sup_{\Gamma_1}|\nabla u_t(\cdot,t)|\le C\Big( m(t)+1\Big)^{p-1},\quad T/2<t<T.
\ee
Let $w=u_t$, $s\in [T/4,t)$ and 
$$
M(s,t)= \sup_{\tau\in [s,t]}m(\tau),\quad \ K_1(\sigma):= \sigma^{1/2}\sup_{\Gamma_1}|\nabla w(x,s+\sigma)|,$$
\be{localtheory4.9'}
K(s,t):=\sup_{[0,t-s]}K_1(\sigma)=\sup_{\tau\in[s,t]}(\tau-s)^{1/2}\|\nabla w(\cdot,\tau)\|_{L^\infty(\Gamma_1)}.
\ee
Using the fact that $w\in C^{2,1}( \bar H\times(0,T))$ and satisfies \eqref{localtheorycutancienb}, \eqref{localtheoryeqw}, and $w=0$ on $\partial H$ for $\tau\in (0,t-s)$, we have
$$
w(x,s+\tau)=S(\tau)w(x,s)+\int_0^\tau S(\tau-\sigma)(B\cdot \nabla w)(x,s+\sigma)d\sigma,
$$
and
$$
|\nabla w(x,s+\tau)|\le  |\nabla S(\tau)w(x,s)|+ \int_0^\tau |\nabla S(\tau-\sigma)(B\cdot \nabla w)(x,s+\sigma)|d\sigma.
$$
For $x\in  H$ and $\tau\in (0,t-s)$, using \eqref{localtheoryutil1-0}-\eqref{localtheoryutil1-1} and 
\eqref{localtheorynablast}, a simple calculation using the heat kernel of $H$ similarly as in the proof of \eqref{localtheorysemigroup-prop} yields
 $$
\begin{aligned}
|\nabla S(\tau-\sigma)(B\cdot \nabla w)(x,s+\sigma)|\le&\  C(n) \int_ H |x-y|(\tau-\sigma)^{-(n+2)/2}e^{-|y-x|^2/4(\tau-\sigma)}|(B\cdot \nabla w)(y, s+\sigma)|\\
\le&\  C(n,p) m^{p-1}(s+\sigma)K_1(\sigma) \sigma^{-1/2}(\tau-\sigma)^{-1/2} \int_{\Gamma_1} \frac{|x-y|e^{\frac{-|y-x|^2}{4(\tau-\sigma)}}}{(\tau-\sigma)^{\frac{n+1}{2}}}dy\\
&+C(n,u)\int_{ H\setminus\Gamma_1} |x-y|(\tau-\sigma)^{-(n+2)/2}e^{-|y-x|^2/4(\tau-\sigma)}(1+y_n)^\delta dy\\
\le &\ C(n,p) m^{p-1}(s+\sigma)K_1(\sigma) \sigma^{-1/2}(\tau-\sigma)^{-1/2} \int_{\R^n} |z|e^{-|z|^2/4}dz\\
&+C(n,u)(\tau-\sigma)^{-1/2}(1+x_n)^\delta \Big(1+(\tau-\sigma)^{\delta/2}\Big),
\end{aligned}
$$
with some $\delta>0$.
Hence, by taking $x\in \Gamma_1$ and using \eqref{localtheorynablast}, we have 
$$
\begin{aligned}
|\nabla w(x,s+\tau)|&\le C(n,p,u)\Big(\tau^{-1/2}+ \int_0^\tau (\tau-\sigma)^{-1/2}\sigma^{-1/2}m^{p-1}(s+\sigma)K_1(\sigma)d\sigma+\int_0^\tau (\tau-\sigma)^{-1/2}d \sigma\Big)\\
&\le C(n,p,u)\Big(\tau^{-1/2}+ M^{p-1}(s,t)K(s,t)+\tau^{1/2}\Big).\\
\end{aligned}
$$
Taking the supremum over $\Gamma_1$, then
multiplying by $\tau^{1/2}$ and finally taking the supremum over $\tau\in(0,t-s)$ we get
$$
K(s,t) \le C_1+C_2(t-s)^{1/2}K(s,t) M^{p-1}(s,t).
$$
Since $C_2(t-s)^{1/2}M^{p-1}(s,t)\to 0$ as $s\to t^-$ and 
$$
\lim_{s\to T/4} C_2(t-s)^{1/2}M^{p-1}(s,t)\ge C_2(T/4)^{1/2}M^{p-1}(T/4,T/2)>0,
$$
 there exist a constant $c_0\in (0,1/2]$ independent of $t$,  and a value $s=s(t)\in (T/4,t)$ such that
\be{localtheory40.41a}
C_2(t-s)^{1/2}M^{p-1}(s,t)=c_0
\ee
and then we get 
\be{localtheory40.41b}
K(s,t)\le 2C_1.
\ee
 Together with \eqref{localtheory40.40} and \eqref{localtheory4.9'}, we obtain 
$$
|m'(\tau)|\le \sup_{\Gamma_1}|\nabla w(x,\tau)|\le 2C_1(\tau-s)^{-1/2}\quad \hbox{for }\  a.e.\quad \tau\in(s,t)
$$
By integration, for $\tau\in [s,t)$, we get
$$
 m(\tau)=m(t)-\int_{\tau}^tm'(\sigma)d\sigma\le m(t)+2C_1\int_{s}^t(\sigma-s)^{-1/2}d\sigma=m(t)+4C_1(t-s)^{1/2}.
$$
hence
\be{localtheory40.41c}
M(s,t)\le m(t)+C.
\ee
Now going back to \eqref{localtheory40.41b} and using \eqref{localtheory4.9'}, \eqref{localtheory40.41a}, we obtain
$$
\sup_{\Gamma_1}|\partial_t \nabla u (\cdot,t)|\le 2C_1 (t-s)^{-1/2}\le 2c_0^{-1}C_1C_2 M^{p-1}(s,t),
$$
 and \eqref{localtheory40.41} follows from \eqref{localtheory40.41c}.

\noindent $\bullet $ \eqref{localtheory40.41} along with \eqref{localtheory40.40}  yields
$$
m'(t)\le C(m(t)+1)^{p-1},\quad\hbox{for } a.e.\ t\in (T/2,T).
$$
Integrating over $(t,\tau)$ and using $m(\tau)\to \infty$ as $\tau\to T$ we arrive at
$$
m(t)+1\ge C (T-t)^{-\frac{1}{p-2}},
$$
which concludes the proof.
\end{proof}

\begin{proof}[Proof of \eqref{localtheoryuppergradbound} in Theorem~\ref{localtheoryonlyGBUpossible}$(ii)$] 
{\bf Step 1. Control of $u_t$.} We shall prove that 
\be{localtheoryclaim1.0}
|u_t(x,t)|\le C(n,p,u)(1+x_n)^{ p\gamma}\quad \hbox{in }\quad H\times (T/2,T).
\ee 
 Recalling  \eqref{localtheorycutancienb}, it remains to prove that \eqref{localtheoryclaim1.0} holds in $\Gamma_1\times[T/2,T)$. Let first note that for any $0<t_1<t_2<T$, we have 
$$
u,\nabla u\in L^\infty\Big(\Gamma_1\times[t_1,t_2]\Big).
$$
Then by the parabolic regularities as in the proof of \eqref{localtheorycutancienb}, we have 
$$
D^2u, u_t\in  L^\infty\Big(\Gamma_1\times[t_1,t_2]\Big).
$$
Set $V=u_t$, we have 
$$
\begin{cases}
V_t-\Delta V=b\cdot \nabla V\quad\hbox{in }\Gamma_1\times(T/2,T)\\
V(\cdot,T/2) \in  L^\infty(\Gamma_1)\\
V(x',0,t)=0,\quad |V(x',1,t)|\le C_0.
\end{cases}
$$
Since $V,b\in L^\infty\Big(\Gamma_1\times[T/2,T-\eps]\Big)$ for any $\eps>0$, the maximum principle \cite[Proposition~52.8]{quittner2019superlinear} (cf. Proposition~\ref{localtheory52.8} below) yields 
$$
\|V(\cdot,t)\|_{L^\infty(\Gamma_1)}\le \max\Big\{C_0,\|V(\cdot,T/2)\|_{L^\infty(\Gamma_1)}\Big\}\quad\hbox{for all }\ t\in (T/2,T-\eps).
$$
The conclusion then follows by letting $\eps\to 0$.

\smallskip

\noindent {\bf Step 2. Partial control of $\nabla u$ on $\Gamma_1$.} We claim that there exists $C>0$ such that 
\be{localtheoryc1-31}
|\nabla u(x,t)|\le C x_n^{-\beta} , \quad (x,t)\in \Gamma_1\times [T/2,T).
\ee
Let $x_0\in \Gamma_1$ and any $\tau \in (T/2,T)$. We define here
$$
R=x_{0,n}/2,\quad B_R=\{x\in \R^n,\ |x-x_0|<R\}.
$$
The proof is based on a Bernstein type argument, similar to \cite[Section~3]{chso2} adapted to the present context, which is given in Appendix~\ref{localtheorylemma3.1}. 
Namely, let $\delta\in(0,1)$, $\tau\in (0,T)$, $w=|\nabla u|^2$ and $z=\eta w$, where $\eta$ is the cut-off from \eqref{localtheorydefeta1}-\eqref{localtheoryeta}. By \eqref{localtheoryeqL} with $f(v)=v$, we have 
$$
\mathcal{L}z+\eta|D^2v|^2\leq
C\eta^\delta(\tau^{-1}+R^{-2}) w+ CR^{-1}\eta^\delta w^{(p+1)/2}\quad\hbox{in }\ \tilde{Q}:=B_R\times(\tau/4,\tau),
$$
where 
$$
\mathcal{L}:=\partial_t -\Delta-b\cdot \nabla\quad \hbox{and }\quad b=p|\nabla u|^{p-2}\nabla u.
$$
Note that 
$$
|w^{p/2}-u_t|=|\Delta u|\le \sqrt{n}|D^2 u|
$$
which implies
$$
\frac{w^p}{2n}\le |D^2 u|^2+|u_t|^2.
$$
 Choosing $\delta=(p+1)/(2p)$ and using \eqref{localtheoryclaim1.0} together with Young's inequality, we obtain
$$
\mathcal{L}z+\frac{\eta w^p}{2n}\leq \frac{\eta w^p}{4n}+
C(\tau^{-1}+R^{-2})^{p/(p-1)}+CR^{-2p/(p-1)}+C=:\frac{\eta w^p}{4n}+B.
$$
Therefore, we have 
$$
\mathcal{L}z\le 0\quad\hbox{in }\ \{(x,t)\in \tilde Q: z\ge (4nB)^{1/p}\}.
$$
Since $z=0$ on the parabolic boundary of $\tilde Q$,
 the maximum principle yields $z\le (4nB)^{1/p}$ in $\tilde Q$.
Finally, using $z=\eta|\nabla u|^2$ and the definition of $\eta$, this implies
$$
|\nabla u (x,t)|\le C(n,p,u)\Big(R^{-\beta}+1 \Big)\le  C(n,p,u)R^{-\beta}\quad\hbox{in }\quad B_{R/2}\times[\tau/2,\tau]
$$
hence \eqref{localtheoryc1-31}.

\vspace{2pt}

\noindent {\bf Step 3. Conclusion.} \label{localtheorystep32.7}
 We use a rescaling argument similar to \cite[Theorem~1.3]{FPS2020}.
Assume that \eqref{localtheoryuppergradbound} fails, then there exist $\eps>0$ and a sequence $(x_j,t_j)\in \Gamma_1\times[T/2,T)$ such that 
\be{localtheorycontrcontrolbor}
|\nabla u (x_j,t_j)|\ge (1+\eps)d_p \rho_j^{-\beta}+j
\ee
where $\rho_j=x_{j,n}$, with $x_j=(x_j',x_{j,n})\in \Gamma_1$.
We note that \eqref{localtheoryc1-31} implies $\rho_j\to 0$ as $j\to \infty$. 
Setting $z_j=(x_j',0)$ and
$$
v_j(y,s)=\rho_j^{\beta-1}u(z_j+\rho_j y,t_j+\rho_j^2s)\quad\hbox{for } (y,s)\in D_j=\big(\rho_j^{-1}\Gamma_1\big)\times (-t_j\rho_j^{-2},0],
$$
a direct computation leads to 
\be{localtheoryeqbeforlim}
\begin{cases}
\partial_s v_j-\Delta_{y} v_j=|\nabla_y v_j|^p\quad \hbox{in } D_j,\\
v_j(y',0,s)=0,\  -t_j\rho_j^{-2}< s\le 0.
\end{cases}
\ee
Moreover, by using \eqref{localtheoryc1-31}, for each $A>0$ there exists $j_0=j_0(A)\ge 1$ such that 
$$
|\nabla v_j(y,s)|\le C\rho_j^\beta(\rho_j y_n)^{-\beta}\le C y_n^{-\beta}, \quad y\in \Gamma_A,\ s\in (-t_j\rho_j^{-2},0],\ j\ge j_0.
$$
Since $\beta<1$ owing to $p>2$, by integrating and using the boundary condition, this implies 
$$
| v_j(y,s)|\le C(1-\beta)^{-1} y_n^{1-\beta},\quad y\in \Gamma_A,\ s\in (-t_j\rho_j^{-2},0],\ j\ge j_0.
$$
 By this together with \eqref{localtheoryeqbeforlim} and interior parabolic estimates,
 there exists a subsequence of $(v_j)_j$ which converges in {  $C^{2,1}_{loc}( H \times(-\infty,0])$}, 
to an ancient (nonnegative) solution { $V\in C^{2,1}(\R^n_+\times(-\infty,0])\cap C(\bar  H \times(-\infty,0])$} of 
\be{localtheoryeqlim1}
\begin{cases}
V_s-\Delta V=|\nabla_y V|^p\quad \hbox{in } \R^n_+\times(-\infty,0],\\
V(y,s)=0,\quad y\in\partial \R^n_+, s\in(-\infty,0].
\end{cases}
\ee

On the other hand, \eqref{localtheoryclaim1.0} implies that 
$$
|\partial_s v_j(y,s)|\le C\rho_j^{\beta+1}\quad\hbox{in }\quad D_j.
$$
In particular, passing to the limit yields $\partial_sV=0$.
Therefore, by the Liouville type result \cite[Theorem~1.1]{FPS2020}, it turns out that there exists $a>0$ such that
$$
V\equiv 0\quad \hbox{or }\quad V(y)=c_p\bigl((y_n+a)^{1-\beta}-(a)^{1-\beta}\bigr).
$$
Hence, setting $e_n=(0,\cdots,0,1)$, we have $ |\nabla V(e_n)|=d_p (1+a)^{-\beta}\le d_p$.
  But by using  \eqref{localtheorycontrcontrolbor} for large $j$, we have
$$
|\nabla v_j( e_n,0)|=\rho_j^{\beta}|\nabla u( x_j,t_j)|\ge (1+\eps)d_p,
$$
therefore, by letting $j\to \infty$, we get $(1+\eps)d_p\le  |\nabla V( e_n)|\le d_p$, a contradiction.
 This concludes the proof.
\end{proof}

\section{Proof of Theorems~\ref{localtheoryapplication}-\ref{localtheorydichotomie}}\label{localtheoryAPL}

A main ingredient of the proof of these theorems is the 
following a priori estimate which holds for any (classical) solution  of \eqref{localtheoryeqE0} in a half-space with Dirichlet boundary condition. 

\begin{prop}\label{localtheorythmancient3}
Let $p>2$ and $u\ge0$ be any solution of \eqref{localtheoryeqE1b} in $ Q_T:=\R^n_+\times(0,T).$
 Then we have
 \be{localtheorycontroludsqa}
0\le  u(x,t)\le C_0(n,p)
x_n^{1-\beta} \left(1+x_n^{2\beta}(T-t)^{-\beta}+\min\{x_n^2t^{-1},(T-t)/t\}\right),
 \ee
  and 
  \be{localtheorycontroludsq1a}
 |\nabla u(x,t)|\le C_1(n,p) x_n^{-\beta} \left(1+x_n^{2\beta}(T-t)^{-\beta}+x_n^{2\beta}t^{-\beta}+\min\{x_n^2t^{-1},(T-t)/t\}\right).
 \ee
 In particular, we have 
 \be{localtheoryusful1}
  |\nabla u(x,t)|\le 2C_1(n,p) x_n^{-\beta} \left(1+x_n^{2\beta}(T-t)^{-\beta}\right)\quad \hbox{in }\ \R^n_+\times[T/2,T).
 \ee
\end{prop}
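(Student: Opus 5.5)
Our approach is a rescaling and contradiction argument built on a Liouville-type theorem, in the spirit of the universal estimates of Polá\v{c}ik--Quittner--Souplet, combined with the doubling lemma of \cite{PQS2}. We first aim at a gradient estimate of the form
\begin{equation*}
|\nabla u(x,t)|\le C\Big(x_n^{-\beta}+t^{-\beta/2}+(T-t)^{-\beta/2}\Big)\quad\hbox{in }Q_T .
\end{equation*}
This is the natural scale-invariant bound: under $u_\lambda(y,s)=\lambda^{\beta-1}u(\lambda y,\lambda^2 s)$ the gradient scales like $\lambda^{\beta}$, and the parabolic distance to the boundary of $Q_T$ is $\min\{x_n,\sqrt t,\sqrt{T-t}\}$. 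The stated bounds \eqref{localtheorycontroludsqa}--\eqref{localtheorycontroludsq1a} will then be obtained by integration in $x_n$ and by rewriting this estimate in the displayed form.

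\textbf{Step 1 (rescaling).} Suppose the gradient estimate fails. Then there are solutions $u_k$ on $Q_{T_k}$ and points $(x_k,t_k)$ such that $M_k:=|\nabla u_k(x_k,t_k)|^{1/\beta}$ satisfies $M_k\,d_k\to\infty$, where $d_k$ is the parabolic distance of $(x_k,t_k)$ to $\{t=0\}\cup\{t=T_k\}$. We only measure distance to the time boundaries and keep the spatial boundary $\partial H$, because the limit problem should live on a half-space. We apply the doubling lemma \cite{PQS2} to the weight $M=|\nabla u|^{1/\beta}$ on the metric space $\bar H\times(0,T)$ with parabolic distance. This yields points where $|\nabla u_k|^{1/\beta}\le 2M_k$ on a parabolic ball of radius $k/M_k$.

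Set $\lambda_k=M_k^{-1}$ and
\begin{equation*}
v_k(y,s)=\lambda_k^{\beta-1}u_k(x_k'+\lambda_k y',\lambda_k y_n,t_k+\lambda_k^2 s).
\end{equation*}
Here we recenter only tangentially when $x_{k,n}/\lambda_k$ stays bounded, and fully when it tends to infinity. The rescaled functions $v_k$ solve the equation with $|\nabla v_k|\le 2^\beta$ on growing sets and $|\nabla v_k(y_k,0)|=1$.

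\textbf{Step 2 (compactness and Liouville).} Since the gradient is bounded and $v_k=0$ on $y_n=0$, we get $|v_k|\le Cy_n$, so no growth condition is lost. Parabolic estimates then give convergence in $C^{2,1}_{loc}$ to an entire solution $V$. In the half-space case, $V$ solves \eqref{localtheoryeqlim1} on $\bar H\times\R$, with bounded gradient and $|\nabla V|=1$ at a point at bounded distance from the boundary. In the interior case, $V$ solves $V_t-\Delta V=|\nabla V|^p$ on $\R^n\times\R$ with bounded gradient; for this case we invoke the whole-space Liouville theorem from \cite{chso2} for bounded-gradient solutions, which gives $\nabla V\equiv0$. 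In the half-space case, \cite[Theorem~3]{chso2} gives $V=V(y_n)$ stationary, and the Liouville result \cite[Theorem~1.1]{FPS2020} gives $V=0$ or $V=c_p((y_n+a)^{1-\beta}-a^{1-\beta})$. The latter has unbounded gradient when $a=0$ and decays as $y_n\to\infty$.

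The contradiction must come from the normalization. To get it, we choose the doubling weight as $|\nabla u|^{1/\beta}$ times a factor forcing the selected point to satisfy $x_{k,n}M_k\to\infty$; in other words, we take the distance to $\partial H$ into account as well. With that choice, $|\nabla V|=1$ is attained at points with $y_n\to\infty$, which is incompatible with the explicit profiles and with the whole-space Liouville theorem. I expect this step, namely arranging that the blown-up points escape the boundary so that the one-dimensional profiles are excluded while the $x_n^{-\beta}$ term absorbs everything near the boundary, to be the main obstacle. An alternative is to use the bound $|\nabla u|\le x_n^{-\beta}$ near the boundary, coming from stationary comparison as in Step~2 of the proof of \eqref{localtheoryuppergradbound}, and to apply the doubling argument only away from the boundary.

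\textbf{Step 3 (conversion).} From the gradient estimate, $u(x,t)=\int_0^{x_n}\partial_n u$ gives
\begin{equation*}
u\le C\Big(x_n^{1-\beta}+x_n t^{-\beta/2}+x_n(T-t)^{-\beta/2}\Big).
\end{equation*}
Factoring out $x_n^{1-\beta}$ turns the term $x_n(T-t)^{-\beta/2}$ into $x_n^{1-\beta}x_n^{\beta}(T-t)^{-\beta/2}$, and Young's inequality bounds $x_n^{\beta}(T-t)^{-\beta/2}$ by $1+x_n^{2\beta}(T-t)^{-\beta}$.

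The $t^{-\beta/2}$ term is handled in two ways, which together produce the $\min$. First, Young's inequality gives $x_n^{\beta}t^{-\beta/2}\le C(1+x_n^2t^{-1})$, since $\beta<1$. Second, comparing $u$ on $[t,T)$ with its initial time and using the $(T-t)$ bound yields the $(T-t)/t$ alternative. Inserting these bounds into the gradient estimate gives \eqref{localtheoryusful1} for $t\ge T/2$, because there $t\ge T-t$.
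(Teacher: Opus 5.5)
\textbf{The gap is fatal: the target estimate in your Step~1, $|\nabla u|\le C(x_n^{-\beta}+t^{-\beta/2}+(T-t)^{-\beta/2})$, is false.} So no choice of doubling weight can rescue the argument. The backward self-similar solution \eqref{localtheory1dsol} of \cite[Theorem~4]{chso2} is a nonnegative solution in $Q_T$. By \eqref{localtheoryexam}, at each fixed $x$ it satisfies $|\nabla v(x,t)|\sim L(\beta+1)x_n^{\beta}(T-t)^{-\beta}$ as $t\to T$. This is much larger than $(T-t)^{-\beta/2}$. Likewise $v\sim L(T-t)^{-\beta}x_n^{\beta+1}$, whereas your Step~3 would give at most linear growth in $x_n$. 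The growth of $v$ corresponds exactly to the term $x_n^{-\beta}\cdot x_n^{2\beta}(T-t)^{-\beta}$ in \eqref{localtheorycontroludsq1a}, so that term is sharp.

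The breakdown shows up in your Step~2. You need the selected points to escape $\partial H$, because the half-space profiles $c_p((y_n+a)^{1-\beta}-a^{1-\beta})$ with $a>0$ are perfectly compatible with $|\nabla V|=1$ at bounded distance. But once they escape, the limit is an entire solution on $\R^n\times\R$ with $|\nabla V|\le 2^\beta$, and no Liouville theorem holds in that class. Indeed $V(y,s)=a\cdot y+|a|^p s$ with $|a|=1$ is such a solution. It is exactly what you get by rescaling $v$ around points $(x,T/2)$ with $x_n\to\infty$, after subtracting $v_k(0,0)$. You must subtract it anyway in the interior case, since the bound $|v_k|\le Cy_n$ is lost once the boundary recedes.

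The alternative you mention does not help either. Step~2 of the proof of \eqref{localtheoryuppergradbound} relies on the $u_t$-bound \eqref{localtheoryclaim1.0}, which is specific to subcritical data, and its constant depends on $u$.

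\textbf{The missing idea} is a mechanism that yields the correct $(T-t)^{-\beta}$ dependence at unit scale. The paper obtains it from a Kaplan-type ODE blow-up argument, not from a Liouville theorem:
\begin{enumerate}
\item Test the equation with $\vap=\psi(x')(3-x_n)x_n^\alpha$ on $B'_3\times(\eps,3)$. Use \eqref{localtheoryeuphi2a}--\eqref{localtheoryeuphi2}, and the Bernstein bound \eqref{localtheoryBernSZ} to discard the boundary terms as $\eps\to0$. This gives $y'\ge C_1y^p-C$ for $y(t)=\int u\vap$.
\item Since the solution survives up to time $T$, this forces $y(t)\le C(1+(T-t)^{-\beta})$.
\item The Li--Yau type inequality \eqref{localtheorycontrolul}, with $s-t=\min\{1,(T-t)/2\}$, converts this into the pointwise bound \eqref{localtheory1l11} at $x_n=1$. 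The error term there is where the $\min\{x_n^2t^{-1},(T-t)/t\}$ in the statement comes from.
\item Scaling with $\lambda=x_n$ gives \eqref{localtheorycontroludsqa}.
\item The Bernstein estimate \eqref{localtheorygrabound} on $B_{x_n/2}(x)\times[t/2,t]$ then gives \eqref{localtheorycontroludsq1a}.
\end{enumerate}
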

 The proof, which is a modification of \cite[Theorem~6]{chso2}, is given in Appendix~\ref{localtheoryAPE}.

\begin{proof}[Proof of Theorem~\ref{localtheorydichotomie}] Based on Proposition~\ref{localtheorythmancient3}, the proof of \eqref{localtheorydico1} follows the argument in Step 3 of the proof of \eqref{localtheoryuppergradbound}; see page~\pageref{localtheorystep32.7}. For the reader's convenience, we provide a complete proof.

Assume by contradiction that the conclusion of Theorem~\ref{localtheorydichotomie} fails. Which means that there exist $\eps>0$ and a sequence $(x_j,t_j)\in  H\times[T/2,T)$ such that 
\be{localtheorycontrI-II}
\begin{cases}
|\nabla u (x_j,t_j)|\ge (1+ \eps)d_p \rho_j^{-\beta}+j\\
|\nabla u (x_j,t_j)|\ge C_1 j \rho_j^{\beta}(T-t_j)^{-\beta},
\end{cases}
\ee
where $\rho_j=x_{j,n}$, with $x_j=(x_j',x_{j,n})\in  H$, and $C_1$ is the positive constant defined in Proposition~\ref{localtheorythmancient3}.
 We claim that 
\be{localtheory0clai0m1}
\rho_j\to 0,\quad (T-t_j)\rho_j^{-2}\to \infty\quad \hbox{as }\ j\to \infty.
\ee
We first show that $\rho_j\le 1 $ for sufficiently large $j$. Otherwise, using \eqref{localtheoryusful1} together with the second line of \eqref{localtheorycontrI-II}, we obtain
$$
 (j-2)\rho_j^\beta(T-t_j)^{-\beta}\le 2,
$$
which is impossible for large $j$. Hence $\rho_j\le 1$ for sufficiently large $j$.
 Moreover,  \eqref{localtheoryusful1} 
and the second line of \eqref{localtheorycontrI-II} yields 
$$
j \rho_j^{\beta}(T-t_j)^{-\beta}\le 2\rho_j^{-\beta}+2\rho_j^{\beta}(T-t_j)^{-\beta}.
$$
Therefore,
$$
\rho_j^2\le \frac{2^{p-1}(T-t_j)}{(j-2)^{p-1}} \quad \hbox{for }\ j>2.
$$
This in particular implies \eqref{localtheory0clai0m1}

  Let $z_j=(x_j',0)$ and define
$$
v_j(y,s)=\rho_j^{\beta-1}u(z_j+\rho_j y,t_j+\rho_j^2s)\quad\hbox{for } (y,s)\in D_j=\R^n_+\times (-t_j\rho_j^{-2},(T-t_j)\rho_j^{-2}).
$$
By direct computation, we have 
\be{localtheoryeqseq}
\begin{cases}
\partial_s v_j-\Delta_{y} v_j=|\nabla_y v_j|^p\quad \hbox{in } D_j,\\
v_j(y,s)=0,\quad y\in\partial \R^n_+,\  -t_j\rho_j^{-2}< s<(T-t_j)\rho_j^{-2}.
\end{cases}
\ee
On the other hand, by using $z_{j,n}=0$ and \eqref{localtheoryusful1}, we have 
$$
|\nabla v_j(y,s)|\le 2C_1\rho_j^\beta\left( (\rho_j y_n)^{-\beta}+ \Big(\frac{\rho_j y_n}{T-t_j-\rho_j^2s}\Big)^{\beta}\right)\le2 C_1\left( y_n^{-\beta}+ y_n^{\beta}\Big(\rho_j^{-2}(T-t_j)-s\Big)^{-\beta}\right)\ \hbox{ in }\ D_j
$$
which, by using  $p>2$ and the boundary conditions, implies 
$$
| v_j(y,s)|\le 2C_1(1-\beta)^{-1} \left( y_n^{1-\beta}+ y_n^{1+\beta}\Big(\rho_j^{-2}(T-t_j)-s\Big)^{-\beta}\right).
$$
 By \eqref{localtheory0clai0m1}, for all $A>0$, there exists $j_0:=j_0(A)> 2$ such that $\R^n_+\times [-A,A]\subset D_j$ for all $j\ge j_0$ and,
$$
|\nabla v_j(y,s)|\le 2C_1\left(y_n^{-\beta}+y_n^\beta\right),\ |v_j(y,s)|\le 2C_1(1-\beta)^{-1}\left(y_n^{1-\beta}+y_n^{1+\beta}\right),\quad (y,s)\in \Gamma_A\times [-A,A],\ j\ge j_0.
$$
Hence $|\nabla v_j|, v_j\in L^\infty(\Gamma_{\eta,R}\times[-A,A])$ for any $0<\eta<R<\infty$.
By interior parabolic estimates, we conclude that $(v_j)_{j\ge j_0}$ is precompact in $C^{2,1}(\Gamma_{\eta,R}\times[-A,A])$.
We then deduce that some subsequence of $(v_j)_j$ converges in each $C^{2,1}(\Gamma_{\eta,R}\times[-A,A])$ to an entire nonnegative solution $V\in C^{2,1}(\R^n_+\times(-\infty,\infty))\cap C(\bar\R^n_+\times(-\infty,\infty))$ of 
\be{localtheoryeqseq2}
\begin{cases}
V_s-\Delta V=|\nabla_y V|^p\quad \hbox{in } \R^n_+\times(-\infty,\infty),\\
V(y,s)=0,\quad y\in\partial \R^n_+, s\in\R.
\end{cases}
\ee
By the Liouville type result \cite[Theorem 3]{chso2}, it turns out that there exists $a\ge 0$ such that
$$
V(y,s)=V(y)=c_p\bigl((y_n+a)^{1-\beta}-a^{1-\beta}\bigr).
$$
 Hence, setting $e_n=(0,\cdots,0,1)$, we have $|\nabla V(e_n)|=d_p (1+a)^{-\beta}\le d_p$.
 Using the first line of \eqref{localtheorycontrI-II}, we also have
$$
|\nabla v_j(e_n,0)|=\rho_j^{\beta}|\nabla u(x_j,t_j)|\ge (1+\eps)d_p,
$$
therefore, we must have $(1+\eps)d_p\le |\nabla V(e_n,0)|= V'(1)\le d_p$, a contradiction.
 This concludes the proof.
\end{proof}

\begin{proof}[Proof of Theorem~\ref{localtheoryapplication}]
Let $M(x,t):=|\nabla u(x,t)|^{p-1}$. Our assumption implies there exist $(x_j,t_j)\in\Gamma_1\times[T/2,T)$, with $(x_{j,n},t_j)\to(0,T)$, such that
\be{localtheoryaprouv}
M(x_j,t_j)> \frac{2 j}{T-t_j}.
\ee
Let $D=\bar\Gamma_1\times[T/2,T)$ and $\Sigma=\bar{\Gamma}_1\times[T/2,T]$. Then $\Gamma:=\Sigma\backslash D=\bar \Gamma_1\times \{T\}$ and for all $(x,t)\in D$ we define $d_p(x,t):=d((x,t),\Gamma)=T-t$.\\
By the doubling lemma in \cite{PQS2}, 
 there exists $(y_j,s_j)\in D$ such that 
\be{localtheorylem5.1}
\begin{cases}
M(y_j,s_j)\ge M(x_j,t_j)\\
M(y_j,s_j)>\frac{2j}{ d_p(y_j,s_j)}\\
M(x,t)\le 2 M(y_j,s_j),\quad \hbox{for all } (x,t)\in D\times B_j,
\end{cases}
\ee
where 
$$
B_j:=\Big\{ (x,t)\in \R^{n+1},\ |x-y_j|+|t-s_j|^{1/2}\le j\lambda_j\Big\},\quad \hbox{ with  }\lambda_j=M^{-1}(y_j,s_j)\underset{j\to\infty}{\to}0.
$$
 In particular, $s_j\to T$ as $j\to \infty$.
 For clarity, we rewrite the second line of \eqref{localtheorylem5.1} as
\be{localtheoryclaimimport}
|\nabla u(y_j,s_j)|^{p-1}> \frac{2 j}{T-s_j}.
\ee
We define the rescaled solutions
$$
v_j(y,s)=\lambda_j^{\beta-1}u(\lambda_jy+y_j,s_j+\lambda_j^2s)\qquad (y,s)\in D_j\cap \tilde{B_j},
$$
where
$$
\begin{aligned}
 D_j:= \tilde D_j\times\big(-s_j\lambda_j^{-2},&(T-s_j)\lambda_j^{-2}\big),\  \tilde D_j:=\{y\in\R^n :-\rho_j\lambda_j^{-1}<y_n\le \lambda_j^{-1}(1-\rho_j) \},\\
  \rho_j:=d(y_j,\R^n_+)=y_{j,n}\quad & \vspace{2cm}\hbox{and }\tilde B_j:=\Big\{(y,s)\in \R^{n+1},\ \ |y|+|s|^{1/2}\le j\Big\}.
\end{aligned}
$$
By a direct computation, we have 
\be{localtheoryequinj}
\partial_s v_j-\Delta_{y} v_j=|\nabla_y v_j|^p,\ (y,s)\in D_j\cap \tilde{B_j},\quad\hbox{with } v_j(y',-\rho_j\lambda_j^{-1},s)=0.
\ee
and, using the last line of \eqref{localtheorylem5.1}, it follows that 
\be{localtheorycboundcond}
 |\nabla v_j(y,s)|\leq 2^{1/(p-1)}\  \hbox{ and $v_j(y,s)\le2^{1/(p-1)}\Big(y_n+\rho_j\lambda_j^{-1}\Big)$ in }\ D_j\cap \tilde{B_j}.
\ee
 Combining \eqref{localtheoryclaimimport} with \eqref{localtheoryusful1} in Proposition~\ref{localtheorythmancient3}, it follows that
$$
 2j(T-s_j)^{-1}\le 
\lambda_j^{-1}\le  C(n,p)\left( \rho_j^{-1}+(T-s_j)^{-1}\right).
$$
Therefore, for sufficiently large $j$, we have $(T-s_j)^{-1}\le \rho_j^{-1}$, hence in particular $\rho_j\to0$ and 
$$
\rho_j \lambda_j^{-1}\le C(n,p),
$$
and we also have 
$$
(T-s_j)\lambda_j^{-2}\to \infty,\quad\hbox{ as } j\to \infty\ .
$$
Consequently, after extracting a subsequence, there exists $a\ge 0$ such that
$$
\tilde D_j\to \tilde{D}_\infty=\left(\R^n_+-A\right) \quad\hbox{as $j\to \infty$ with } A=(0,\cdots,0,a).
$$
Together with \eqref{localtheoryequinj}-\eqref{localtheorycboundcond} and \eqref{localtheorycontroludsqa}, interior-boundary parabolic estimates and standard imbeddings, we obtain a subsequence of $v_j$ which converges in $C^{2,1}_{loc}(\bar  H \times\R)$, to a nonnegative entire solution~$U_1\in C^{2,1}(\tilde{D}_\infty\times\R)\cap C(\bar{\tilde{ D}}_\infty\times\R)$ of  \eqref{localtheoryeqseq2} satisfying $|\nabla U_1 (0,0)|=1$ {\cb with  $U_1(x',-a,t)=0$}. 
By \cite[Theorem 3]{chso2}, we know that 
$$
U_1(x,t)=U_1(x_n)=c_p\bigl((x_n+\beta)^{1-\beta}-(\beta-a)^{1-\beta}\bigr).
$$
In particular, $\nabla v_j$ converges to $ U_1' e_n$ locally uniformly as $j\to \infty$.
\end{proof}

\appendix

 \section{Auxiliary results}
 
We first collect some known auxiliary results used throughout the paper and refer to \cite{chso2} for the proofs.
For given $R,T>0$, we denote
$$ B_R:=\{x\in\R^n;\  |x|<R\},\quad 
Q_{R,T}:=B_R \times (0, T),\quad Q'_{R,T}:=B_{R/2} \times (0, T).$$
The first one is the following local Li-Yau type estimate.
 \begin{thm}[Theorem~7 of \cite{chso2}]  \label{localtheorythmLY}
 Let  $p>2$,  $R, T>0$ and let $u\in C^{2,1}(Q_{R,T})$ be a classical solution of \eqref{localtheoryeqE0}
 in~$Q_{R,T}$. 
 
\begin{itemize}
\item[(i)]  For each $a \in[0, 1)$, there exists $C = C(n,p,a) > 0$ such that
\be{localtheorycontrolul0}
a|\nabla u|^p-u_t\le  C\bigl(R^{-\beta-1}+R^{1-\beta}t^{-1}\bigr)
\quad\hbox{ in $Q'_{R,T}$.}
\ee

\item[(ii)] There exists $C = C(n,p) > 0$ such that, for all $0<t<s<T$ and $x,y\in B_{R/2}$,
\be{localtheorycontrolul}
 u(x,t)\le u(y,s)+C\Bigl(\frac{|y-x|^p}{s-t}\Bigr)^\beta+C\bigl(R^{-\beta-1}+R^{1-\beta}t^{-1}\bigr)(s-t),
 \ee
\end{itemize}
 \end{thm}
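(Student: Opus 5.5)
We prove Theorem~\ref{localtheorythmLY} by a Bernstein/maximum-principle argument applied to the auxiliary function $J:=a|\nabla u|^p-u_t$, localized by a cut-off; part (ii) then follows by integrating (i) along a segment.

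\textbf{Part (i).} Set $w=|\nabla u|^2$, $b=p|\nabla u|^{p-2}\nabla u$ and $\mathcal{L}=\partial_t-\Delta-b\cdot\nabla$.

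First we compute $\mathcal{L}J$. Differentiating the equation gives $\mathcal{L}u_t=0$. For $w^{p/2}$, the Bochner identity gives
$$\mathcal{L}w^{p/2}=-\tfrac p2 w^{p/2-1}\Big(2|D^2u|^2+(\tfrac p2-1)w^{-1}|\nabla w|^2\Big)\le -p\,w^{p/2-1}|D^2u|^2.$$
Hence
$$\mathcal{L}J\le -ap\,w^{\frac{p-2}{2}}|D^2u|^2\le -\frac{ap}{n}\,w^{\frac{p-2}{2}}(\Delta u)^2.$$
Since $\Delta u=u_t-w^{p/2}$, on the set $\{J>0\}$ we have $u_t<a w^{p/2}$. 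Therefore $|\Delta u|=w^{p/2}-u_t>(1-a)w^{p/2}$. It is also at least $w^{p/2}-u_t\ge J$. This yields the key coercive inequality
$$\mathcal{L}J\le -c\,w^{\frac{p-2}{2}}\Big(\big((1-a)w^{p/2}\big)^2+J^2\Big)\quad\text{on }\{J>0\}.$$
Because $J\le a w^{p/2}$ there, the right-hand side dominates $-c\,J^{2+\frac{p-2}{p}}=-c\,J^{\frac{2(p-1)}{p}}$.

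Next we localize. Take a spatial cut-off $\eta$ supported in $B_R$, equal to $1$ on $B_{R/2}$, with $|\nabla\eta|\le CR^{-1}\eta^{\delta}$ and $|D^2\eta|\le CR^{-2}\eta^{\delta}$. We also use a time factor $t$ and set $Z=t\eta J$. At a positive interior maximum of $Z$ we use $\nabla Z=0$ and $\mathcal{L}Z\ge0$. The error terms are
\begin{itemize}
\item $\eta J$ from $\partial_t t$,
\item $tJ(|\Delta\eta|+|\nabla\eta|^2/\eta)$,
\item the drift term $tJ|b||\nabla\eta|\le C t J w^{\frac{p-1}{2}}R^{-1}\eta^\delta$.
\end{itemize}
We absorb each into the coercive term $t\eta\,w^{\frac{p-2}{2}}(w^{p}+J^2)$ using Young's inequality and $w^{p/2}\ge J/a$. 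The homogeneity of the equation, with $u\sim R^{1-\beta}$, $t\sim R^2$ and $J\sim R^{-\beta-1}$, dictates the outcome. We get $Z\le C(R^{1-\beta}+tR^{-\beta-1})$, that is, $J\le C(R^{-\beta-1}+R^{1-\beta}t^{-1})$ on $Q'_{R,T}$.

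The main obstacle is the drift term $b\cdot\nabla\eta$. It carries the extra factor $w^{(p-1)/2}$ and must be controlled by $\eta w^{(p-2)/2}w^{p}$ with the correct powers of $\eta$ (choosing $\delta$ close to $1$). We also need the uniformity in $a\in[0,1)$, which is why $C$ depends on $a$ through $(1-a)^{-1}$.

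\textbf{Part (ii).} Fix $a=1/2$ and set $K:=C(R^{-\beta-1}+R^{1-\beta}t^{-1})$. Then (i) gives $u_t\ge \tfrac12|\nabla u|^p-K$ on $B_{R/2}\times[t,s]$; the bound only improves for later times.

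Consider the path $\gamma(\tau)=x+\frac{\tau-t}{s-t}(y-x)$ for $\tau\in[t,s]$, which stays in the convex ball $B_{R/2}$. Along it,
$$\frac{d}{d\tau}u(\gamma(\tau),\tau)=u_t+\nabla u\cdot\frac{y-x}{s-t}\ge \tfrac12|\nabla u|^p-|\nabla u|\frac{|y-x|}{s-t}-K.$$
By Young's inequality, $|\nabla u|\,|y-x|/(s-t)\le \tfrac12|\nabla u|^p+C\big(|y-x|/(s-t)\big)^{p/(p-1)}$. Integrating over $[t,s]$ then gives
$$u(y,s)-u(x,t)\ge -C\,|y-x|^{p/(p-1)}(s-t)^{-1/(p-1)}-K(s-t).$$
Since $\beta=1/(p-1)$, this is exactly $-C\big(|y-x|^p/(s-t)\big)^{\beta}-K(s-t)$, which is the claim.
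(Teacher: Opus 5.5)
The paper only quotes this result from \cite{chso2} and gives no proof, so I am judging your argument on its own. Part (ii) is correct: it is the standard integration of (i) along the segment from $(x,t)$ to $(y,s)$, and your Young-inequality bookkeeping produces exactly the claimed terms.

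Part (i), however, has a genuine gap. You assert that on $\{J>0\}$ one has $J\le a w^{p/2}$. You then use $w^{p/2}\ge J/a$ to turn the coercive term $w^{(p-2)/2}J^2$ into a power of $J$. (Incidentally, $2+\frac{p-2}{p}=\frac{3p-2}{p}$, not $\frac{2(p-1)}{p}$.) But $J\le aw^{p/2}$ is equivalent to $u_t\ge 0$, which is exactly what fails in the relevant case: (i) with $a=0$ is a lower bound on $u_t$. Because $p>2$, the weight $w^{(p-2)/2}$ vanishes at spatial critical points of $u$. Take a point where $\nabla u=0$ and $u_t=\Delta u<0$, for instance a local maximum of $u(\cdot,t)$ with $\Delta u\ll 0$. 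There $J=-u_t>0$ and $b=0$. Moreover $\mathcal{L}J=0$ exactly, since the Hessian of $\xi\mapsto|\xi|^p$ vanishes at $\xi=0$. So the inequality $\mathcal{L}J\le -cJ^{(3p-2)/p}$ on $\{J>0\}$ is false. Nothing prevents the maximum of $t\eta J$ from sitting at such a point, and there the error terms $\eta J$ and $tJ(|\Delta\eta|+2|\nabla\eta|^2/\eta)$ cannot be absorbed by anything.

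One way to repair this is to remove the degeneracy. Work with $J:=a(w+R^{-2\beta})^{p/2}-u_t$, which dominates $a|\nabla u|^p-u_t$.
\begin{itemize}
\item Since $\mathcal{L}w=-2|D^2u|^2$ and $s\mapsto a(s+R^{-2\beta})^{p/2}$ is convex, you get $\mathcal{L}J\le -ap(w+R^{-2\beta})^{(p-2)/2}|D^2u|^2$. The weight is now at least $R^{-\beta(p-2)}=R^{\beta-1}$.
\item Use $(w+R^{-2\beta})^{p/2}\le (1+\epsilon)w^{p/2}+C_\epsilon R^{-\beta-1}$ with $a(1+\epsilon)<1$. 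Then on $\{J\ge CR^{-\beta-1}\}$ you get $-\Delta u\ge J/2+c\,w^{p/2}$, hence $\mathcal{L}J\le -c(w+R^{-2\beta})^{(p-2)/2}(J^2+w^p)$.
\item Consider a positive maximum of $(t-t_*)\eta J$. Shift the time origin to some $t_*>0$ and let $t_*\to0$ at the end, since nothing controls $J$ as $t\to0$.
\item At that maximum, the time term forces $J\lesssim R^{1-\beta}(t-t_*)^{-1}$; this is where the term $R^{1-\beta}t^{-1}$ in the statement comes from. The cut-off term forces $\eta J\lesssim R^{-\beta-1}$.
\item The drift term, treated by Young as you propose after bounding $(w+R^{-2\beta})^{(p-2)/2}\ge w^{(p-2)/2}$, also forces $\eta J\lesssim R^{-\beta-1}$, provided $\delta\ge 1/p$.
\end{itemize}
Finally, deduce the case $a=0$ from, say, $a=1/2$ by monotonicity in $a$, since your coercivity constant $ap/n$ vanishes at $a=0$.
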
 
 The second one is the following local Bernstein type estimate.
\begin{thm}[Theorem~8 of \cite{chso2}] \label{localtheorypropBern}
 Let $p > 1$, $R, T>0$ 
 and let $u\in C^{2,1}(Q_{R,T})$  be a classical solution of \eqref{localtheoryeqE0} 
 in $Q_{R,T}$ such that
 $M:=\sup_{Q_{R,T}} u<\infty$. We have:
\be{localtheorygrabound} 
 |\nabla u|\le C(n,p)\left\{\frac{M-u}{R}+\Big(\frac{M-u}{R^2\wedge t}\Big)^{1/p}\right\}
\ \hbox{ in $Q'_{R,T}$.}
\ee 
 \end{thm}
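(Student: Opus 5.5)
The plan is a Bernstein argument applied to the \emph{distance to the maximum}, $v:=M-u+\varepsilon\ge\varepsilon>0$; at the end we let $\varepsilon\to0$. Set $w:=|\nabla u|^2$, $b:=p|\nabla u|^{p-2}\nabla u$, and $\mathcal L:=\partial_t-\Delta-b\cdot\nabla$. Differentiating the equation gives
\[
\mathcal L w=-2|D^2u|^2,\qquad \mathcal L v=-(u_t-\Delta u)+b\cdot\nabla u=(p-1)\,w^{p/2}.
\]
The second identity is the key structural gain: $v$ has a positive source of order $w^{p/2}$ under $\mathcal L$. To exploit it, consider $\zeta:=w\,\psi(v)$ with $\psi(v)=v^{-q}$ for a fixed $q\in(0,1)$, say $q=1/2$. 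Expanding,
\[
\mathcal L\zeta=\psi\,\mathcal L w+w\psi'\,\mathcal L v-w\psi''|\nabla v|^2-2\psi'\,\nabla v\cdot\nabla w .
\]

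Next we estimate the terms of $\mathcal L\zeta$.
\begin{itemize}
\item The term $w\psi'\mathcal Lv=-q(p-1)v^{-q-1}w^{1+p/2}$ is the good absorbing term.
\item For the cross term, use $|\nabla w|\le 2|\nabla u||D^2u|$ and Young's inequality: $2|\psi'||\nabla v||\nabla w|\le 2\psi|D^2u|^2+2q^2v^{-q-2}w^2$. The first piece is absorbed by $\psi\mathcal Lw=-2\psi|D^2u|^2$.
\item Combined with $-w\psi''w=-q(q+1)v^{-q-2}w^2$, the remaining $w^2$ terms sum to $q(q-1)v^{-q-2}w^2\le0$. This is why $q<1$.
\end{itemize}
Hence
\[
\mathcal L\zeta\le -c\,v^{-q-1}w^{1+p/2},\qquad c=q(p-1)>0 .
\]

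We then localize with $z:=\eta\zeta$, where $\eta(x,t)=\eta_1(x)\eta_2(t)$. The spatial cutoff $\eta_1$ is supported in $B_R$, equals $1$ on $B_{R/2}$, and satisfies $|\nabla\eta_1|\le CR^{-1}\eta_1^{1/2}$ and $|D^2\eta_1|\le CR^{-2}$. The time factor is $\eta_2(t)=\min(t/\tau,1)$ on $(0,\tau)$, where $\tau$ is an arbitrary time, which gives a factor $\tau^{-1}$. At an interior positive maximum of $z$ in $B_R\times(0,\tau]$ we have $\mathcal L z\ge0$ and $\nabla z=0$. Using $\eta\nabla\zeta=-\zeta\nabla\eta$, the cutoff error terms are:
\begin{itemize}
\item $\zeta(|\Delta\eta|+\partial_t\eta+2|\nabla\eta|^2/\eta)\lesssim \zeta(R^{-2}+\tau^{-1})$;
\item the drift term $|b\cdot\nabla\eta|\,\zeta\lesssim w^{(p-1)/2}R^{-1}\eta^{1/2}\zeta$.
\end{itemize}
Multiply the resulting inequality by $\eta$ and divide by $v^{-q}w$. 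Using $v\le M-u+\varepsilon$ and Young's inequality, this gives at the maximum point
\[
\eta w^{p/2}\lesssim v\,(R^{-2}+\tau^{-1})+v\,R^{-1}(\eta w)^{(p-1)/2}.
\]
The last term is absorbed, because $v R^{-1}X^{(p-1)/2}\le \tfrac12X^{p/2}+Cv^pR^{-p}$ with $X=\eta w$. Therefore, at the maximum,
\[
(\eta w)^{1/2}\lesssim (v/R)+\bigl(v/(R^2\wedge\tau)\bigr)^{1/p}.
\]

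The remaining step is to transfer this bound from the maximum point $(x_*,t_*)$ to an arbitrary point of $Q'_{R,\tau}$, and this is the main obstacle. The quantity being maximized contains the weight $v^{-q}$, so the bound at $(x_*,t_*)$ involves $v(x_*,t_*)$, which must be compared with $v(x,t)$ at the target point. To handle this, I would estimate the supremum of $z$ rather than $w$ itself. At $(x_*,t_*)$ the bound gives $z\le v_*^{-q}\bigl(v_*^2R^{-2}+(v_*/(R^2\wedge\tau))^{2/p}\bigr)$, where $v_*:=v(x_*,t_*)\le M+\varepsilon$. Because $2-q>0$ and $2/p-q>0$, this is nondecreasing in $v_*$. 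I would therefore choose $q<\min(1,2/p)$ and bound it by the same expression with $v$ at the target point replaced by $\sup v$. This yields the estimate with $M$ in place of $M-u$, i.e.\ the weaker form. To obtain the sharp form with $M-u$, first take $\sup u$ over the smaller cylinder: replacing $M$ by the local supremum on a shrunken cylinder keeps $M-u$ at the target point comparable. If that does not give the exact $M-u$, I would switch to the classical weight $\psi(v)=v^{-2}$ with a modified constant, so that $|\nabla v|/v$ is estimated directly, and then multiply by $v(x,t)$. Finally, let $\varepsilon\to0$, choose $\tau=t$, and cover $Q'_{R,T}$ to conclude \eqref{localtheorygrabound}.
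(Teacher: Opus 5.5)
\textbf{There is a genuine gap: the proposal does not prove the stated estimate.} Your Bernstein computation for $\zeta=w\,v^{-q}$ is correct, and a weight in $M-u$ is the right framework (the paper's proof is of this type). But you stop at the step you call the main obstacle, and that step is where the content of the theorem lies.

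\textbf{What the argument actually yields.} With a power weight, the bound at the maximum point $(x_*,t_*)$ of $z$ is $z_*\lesssim v_*^{2-q}R^{-2}+v_*^{2/p-q}(R^2\wedge\tau)^{-2/p}$, where $v_*=M-u(x_*,t_*)+\varepsilon$. Nothing relates $v_*$ to $M-u$ at the target point. So you prove $|\nabla u|\lesssim S/R+(S/(R^2\wedge t))^{1/p}$ with $S=\sup_{Q_{R,T}}(M-u)=M-\inf u$. This is weaker than the claim. It reduces to the ``$M$-form'' only if $u\ge 0$, which is not a hypothesis.

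\textbf{Neither fallback works.} Replacing $M$ by a local supremum does not make $M-u(x_*,t_*)$ comparable to $M-u(x,t)$: the target may be a point where $u$ is nearly maximal while $z$ peaks elsewhere. With $\psi=v^{-2}$, your own bookkeeping gives the positive coefficient $q(q-1)=2$ on $v^{-4}w^2$. The absorbing term $2(p-1)v^{-3}w^{1+p/2}$ beats it only for $w\ge((p-1)v)^{-2/(p-2)}$, a threshold that blows up as $M-u\to 0$; for $p\le 2$ it fails for all large $w$ once $v<1$. More fundamentally, the target bound has two homogeneities in $M-u$ (degree $1$ and degree $1/p$). For a pure power weight, the bound on $z_*$ is free of $v_*$ for at most one of them ($q=2$ or $q=2/p$), never both, and $q=2$ is not even admissible in your computation.

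\textbf{The missing idea.} The proof in \cite{chso2}, sketched in Appendix~\ref{localtheorylemma3.1}, uses a non-homogeneous weight calibrated to the target. Set $v=h(M-u)$ with $h'=1/g$ and $g(s)=R^{-1}s+(R^{-2/p}+\tau^{-1/p})s^{1/p}$, i.e.\ $\psi=g(M-u)^{-2}$ in your notation, so that $|\nabla v|=|\nabla u|/g(M-u)$.
\begin{itemize}
\item Writing and differentiating the equation satisfied by $v$ puts the cross term into the modified drift $b=\bigl[pf'^{p-1}w^{(p-2)/2}-2f''/f'\bigr]\nabla v$. It also keeps the full square $|D^2v|^2$, which removes the lossy Young step that forced $q<1$ for you.
\item The transformation terms are $\mathcal N w=-2(p-1)g^{p-1}g'\,w^{(p+2)/2}+2gg''\,w^2$. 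Both are nonpositive since $g'>0>g''$, and $g^{p-1}g'\ge p^{-1}(R^{-2/p}+\tau^{-1/p})^p\ge p^{-1}(R^{-2}+\tau^{-1})$.
\item With this $g$, all localization errors are dominated by these two good terms as soon as $|\nabla v|^2\ge A(n,p)$, uniformly in the value of $M-u$.
\item The maximum principle then gives $|\nabla v|\le C(n,p)$ on $B_{R/2}\times[\tau/2,\tau]$, a bound carrying no information about the maximum point. The identity $|\nabla u|=g(M-u)\,|\nabla v|$ then gives the estimate pointwise, taking $\tau=t$.
\end{itemize}

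\textbf{Routine repairs to your localization.}
\begin{itemize}
\item $|\nabla\eta|\le CR^{-1}\eta^{1/2}$ does not let you absorb $R^{-1}w^{(p-1)/2}|\nabla\eta|$ into $\eta w^{p/2}$ when $p>2$. You need $|\nabla\eta|\le CR^{-1}\eta^{\delta}$ with $\delta\ge (p-1)/p$.
\item The passage from $\eta w^{p/2}$ to $(\eta w)^{p/2}$ fails for $p<2$.
\item No regularity is assumed at $t=0$ or on $\partial B_R$. The cutoff must therefore vanish at a positive time such as $\tau/4$ and be supported in a smaller ball; otherwise $z$ need not vanish on the parabolic boundary.
\end{itemize}
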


We close this section with the following two maximum/comparison principles used in our proofs. We refer the reader to \cite[Section~52.3]{quittner2019superlinear} for further details. In what follows, we denote $Q_T=\Omega\times(0,T)$.

\begin{prop}[Proposition~52.8 of \cite{quittner2019superlinear}]\label{localtheory52.8}
Let $0<T<\infty$, $K\ge0$, and  $\Omega$ be an arbitrary domain in $\R^n$.
Assume that $w\in C(\overline{\Omega}\times(0,T))\cap C([0,T),L^2_{loc}(\overline{\Omega}))$, satisfies 
 $$
 \sup_{Q_T}w<\infty,\quad  w_t,\nabla w,D^2w\in L^2_{loc}(Q_T).
 $$
If $w\le A$ on the parabolic boundary of $Q_T$ and 
$$
w_t-\Delta w\le K|\nabla w|\quad a.e. \ \hbox{in } Q_T,
$$
 then $w\le A$ in $Q_T$.
\end{prop}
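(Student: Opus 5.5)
The plan is an energy (Stampacchia-type) argument for the positive part $v:=(w-A)_+$, with an integrable spatial weight to handle the possibly unbounded domain $\Omega$. Set $M:=\sup_{Q_T}(w-A)_+<\infty$. To avoid any issue at the boundary, I first work with $v_\delta:=(w-A-\delta)_+$ for $\delta>0$. Since $w\in C(\overline\Omega\times(0,T))$ and $w\le A$ on $\partial\Omega\times(0,T)$, the function $v_\delta(\cdot,t)$ vanishes in a neighborhood of $\partial\Omega\cap K$ for every compact $K$. Hence $v_\delta\,\zeta\in H^1_0(\Omega)$ for any $\zeta\in C_c^\infty(\R^n)$. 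Moreover $\nabla v_\delta=\chi_{\{w>A+\delta\}}\nabla w$, and $v_\delta$ inherits the $L^2_{loc}$ regularity of $w_t$ and $\nabla w$.

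For weights, take $\psi(x)=e^{-\sqrt{1+|x|^2}}$, so that $|\nabla\psi|\le\psi$ and $\psi^2\in L^1(\R^n)$. Also take a cutoff $\zeta_R(x)=\zeta(x/R)$ with $|\nabla\zeta_R|\le C/R$, and put $\phi=\zeta_R\psi$. For $0<\tau<t<T$, multiply the inequality $w_t-\Delta w\le K|\nabla w|$ by $v_\delta\phi^2\ge0$ and integrate over $\Omega\times(\tau,t)$. The time term is $\frac12\int v_\delta^2\phi^2\big|_\tau^t$; this is justified because $v_\delta^2\phi^2$ has $L^1_{loc}$ time derivative $2v_\delta w_t\phi^2$ (chain rule for Lipschitz truncations in $W^{1,1}$, or Steklov averages). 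Integrating the Laplacian term by parts, which is legitimate by the $H^1_0$ property above, and absorbing with Young's inequality together with $|\nabla\phi|\le(1+C/R)\phi$ on the support, gives
\[
\tfrac12\int v_\delta^2\phi^2(t)+\tfrac12\int_\tau^t\!\!\int|\nabla v_\delta|^2\phi^2\le \tfrac12\int v_\delta^2\phi^2(\tau)+C(K^2+1)\int_\tau^t\!\!\int v_\delta^2\phi^2,
\]
where the cross term $2\int v_\delta\phi|\nabla v_\delta||\nabla\phi|$ is handled in the same way as $K\int|\nabla v_\delta|v_\delta\phi^2$. Since $v_\delta\le M$ and $\psi^2\in L^1$, I can let $R\to\infty$ by monotone convergence and obtain the same inequality with $\phi=\psi$. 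Gronwall's lemma then yields $y(t)\le e^{C(K^2+1)(t-\tau)}y(\tau)$, where $y(t)=\int v_\delta^2\psi^2(t)$.

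Next I let $\tau\to0$. Since $w\in C([0,T),L^2_{loc})$ and $w(\cdot,0)\le A$, the truncation is $1$-Lipschitz, so $v_\delta(\tau)\to (w(0)-A-\delta)_+=0$ in $L^2_{loc}$. The uniform bound $v_\delta\le M$ together with $\psi^2\in L^1$ then gives $y(\tau)\to0$ by splitting into a large ball and its complement. Hence $y\equiv0$, that is, $w\le A+\delta$, and letting $\delta\to0$ concludes.

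The main obstacle is the justification step: the low regularity (only $L^2_{loc}$ derivatives, and integrability near $t=0$ only in the weak sense), together with the arbitrary, possibly irregular and unbounded domain. These are handled respectively by the $\delta$-shift, which makes the truncation vanish near $\partial\Omega$ on compacts; by working on $(\tau,t)$ and using $C([0,T),L^2_{loc})$; and by the exponential weight combined with $\sup w<\infty$.
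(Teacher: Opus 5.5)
The paper does not prove this statement. It quotes it from \cite[Proposition~52.8]{quittner2019superlinear} and uses it as a black box, so there is no argument in the paper to compare yours with. Your weighted Stampacchia-type energy argument is a correct, self-contained proof, and it is the natural route given these hypotheses: $L^2_{loc}$ derivatives, $C([0,T),L^2_{loc})$, continuity up to $\partial\Omega$, and $\sup w<\infty$. Its steps are:
\begin{itemize}
\item the $\delta$-shifted truncation, so that all integrations by parts happen on compact subsets of $Q_T$;
\item the integrable weight $\psi=e^{-\sqrt{1+|x|^2}}$, so that only $\sup w<\infty$ is used;
\item Gronwall on $(\tau,t)$;
\item $\tau\to0$ via $C([0,T),L^2_{loc})$, with tails controlled by $v_\delta\le M$ and $\psi^2\in L^1$;
\item finally $\delta\to0$.
\end{itemize}

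One inequality is false as written and must be repaired. For $\phi=\zeta_R\psi$ you claim $|\nabla\phi|\le(1+C/R)\phi$ on the support. This would require $|\nabla\zeta_R|\le (C/R)\zeta_R$. No nontrivial compactly supported cutoff satisfies this: $\log\zeta_R$ would be Lipschitz, so $\zeta_R$ could never reach $0$. Near the edge of $\mathrm{supp}\,\zeta_R$ one has $|\nabla\phi|\approx\psi|\nabla\zeta_R|\gg\phi$.

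The fix is to use $|\nabla\phi|^2\le 2\zeta_R^2|\nabla\psi|^2+2\psi^2|\nabla\zeta_R|^2\le 2\phi^2+2C^2R^{-2}\psi^2$. The cross term then yields your inequality plus an extra term bounded by $CM^2R^{-2}(t-\tau)\|\psi\|^2_{L^2(\R^n)}$. This term vanishes as $R\to\infty$, precisely because $v_\delta\le M$ and $\psi^2\in L^1$. So you do obtain the inequality with $\phi=\psi$, and Gronwall applies as you say.

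Also make explicit that the neighbourhood of $\partial\Omega$ where $w<A+\delta$ can be taken uniform in $s\in[\tau,t]$. This follows from uniform continuity of $w$ on compact subsets of $\overline\Omega\times(0,T)$. It is what guarantees that $v_\delta\phi^2$ is supported in a compact subset of $Q_T$, where the $L^2_{loc}(Q_T)$ regularity of $w_t,\nabla w,D^2w$ is available. With these two points, the rest of your argument goes through.
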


\begin{prop}[Proposition~52.10 of \cite{quittner2019superlinear}]\label{localtheory52.10}
Let $0<T<\infty$, $\Omega$ be an arbitrary domain in $\R^n$, and let $f=f(s,\xi):\R\times\R^n\to \R$, be a $C^1$-function. Let $u\in C(\overline{\Omega}\times(0,T))$ satisfy
$$
u\in C([0,T),L^2_{loc}(\overline{\Omega})),\quad u\in L^\infty(Q_T),\quad u_t,\nabla u,D^2u\in L^2_{loc}(Q_T),
$$
and similarly for $v$. If $f$ depends on $\xi$, we also assume that $\nabla u,\nabla v\in L^\infty(Q_T)$. If $u\le v$ on the parabolic boundary of $Q_T$ and 
$$
u_t-\Delta u-f(u,\nabla u)\le v_t-\Delta v-f(v,\nabla v)\quad a.e. \ \hbox{in } \ Q_T, 
$$
then $ u\le v$ in $Q_T$.
\end{prop}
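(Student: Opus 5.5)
We plan to reduce to a linear differential inequality for $w:=u-v$ and then run a weighted $L^2$ energy argument on the positive part $w_+$. The reason for going through energies rather than a pointwise maximum principle is that we only have $L^2_{loc}$ second derivatives and $\Omega$ is an arbitrary, possibly unbounded, domain.

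\textbf{Step 1: linearization.} Since $u,v\in L^\infty(Q_T)$, and also $\nabla u,\nabla v\in L^\infty(Q_T)$ when $f$ depends on $\xi$, the arguments $(u,\nabla u)$ and $(v,\nabla v)$ lie in a fixed compact set $K_0\subset\R\times\R^n$. As $f\in C^1$, the mean value formula gives, a.e. in $Q_T$,
$$
f(u,\nabla u)-f(v,\nabla v)=c(x,t)\,w+b(x,t)\cdot\nabla w,
$$
with $c=\int_0^1\partial_sf(\theta u+(1-\theta)v,\ \theta\nabla u+(1-\theta)\nabla v)\,d\theta$ and $b$ defined analogously with $\nabla_\xi f$. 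Both satisfy $|c|+|b|\le K:=\sup_{K_0}|\nabla f|$. Hence $w_t-\Delta w\le cw+b\cdot\nabla w$ a.e. Replacing $w$ by $e^{-\lambda t}w$ with $\lambda=K+1$, we may assume $c\le -1$. Positivity of $w$ is unaffected, so on $\{w>0\}$ the zero order term is nonpositive.

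\textbf{Step 2: weighted energy.} We take a smooth weight $\psi(x)=e^{-\sqrt{1+|x|^2}}$, which satisfies $|\nabla\psi|\le\psi$ and $\psi\in L^1(\R^n)$. The identity we aim for is
$$
\frac{d}{dt}\frac12\int_\Omega w_+^2\psi=\int_\Omega w_+w_t\psi ,
$$
which is legitimate since $w\in C([0,T),L^2_{loc})$ and $w_t\in L^2_{loc}$. Into this we insert the differential inequality and integrate by parts: $-\int w_+\Delta w\,\psi=\int|\nabla w_+|^2\psi+\int w_+\nabla w_+\cdot\nabla\psi$. No boundary term appears on $\partial\Omega$, because $w\le0$ there (parabolic boundary), so $w_+$ vanishes on $\partial\Omega$. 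Using $|\nabla\psi|\le\psi$, $|b|\le K$ and Young's inequality, the gradient terms are absorbed into $\int|\nabla w_+|^2\psi$, and we obtain
$$
\frac{d}{dt}\int_\Omega w_+^2\psi\le C(K)\int_\Omega w_+^2\psi .
$$
Since $w_+(\cdot,0)=0$, Gronwall's lemma gives $w_+\equiv0$, i.e. $u\le v$.

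\textbf{Main obstacle: rigor of Step 2.} The difficulty is that $w$ has only local $W^{2,1;2}$ regularity, which is why we did not apply Proposition~\ref{localtheory52.8} directly to $w$: the zero order term $cw$ has the wrong sign on $\{w<0\}$. We plan to handle this in two ways.

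First, we perform the computation with an additional compactly supported cutoff $\varphi_R(x)=\varphi(x/R)$, writing $\psi\varphi_R^2$ in place of $\psi$. All integrals then live on compact subsets of $\overline\Omega$. The extra terms coming from $\nabla\varphi_R$ are bounded by $CR^{-1}\sup w_+^2\int\psi$, using $\sup w<\infty$, and they vanish as $R\to\infty$.

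Second, we integrate in time over $[\tau,t]$ with $\tau>0$, and let $\tau\to0$ using $w\in C([0,T),L^2_{loc})$. For the spatial integration by parts near $\partial\Omega$, which is arbitrary and need not be smooth, we replace $w_+$ by $(w-\delta)_+$. Since $w\le0\le\delta$ on $\partial\Omega$ and $w$ is continuous up to the boundary, $(w-\delta)_+\varphi_R$ is supported away from $\partial\Omega$. The integration by parts is then purely interior, and we let $\delta\to0$ at the end.
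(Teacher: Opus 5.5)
The paper gives no proof of this statement; it quotes it from Quittner--Souplet and uses it as a black box. Your argument is a correct, self-contained proof, and it is the natural route for strong ($W^{2,1;2}_{loc}$) solutions in an arbitrary domain, where pointwise barrier arguments are not available. The steps are: linearize $f$ on the compact range of $(u,\nabla u)$ and $(v,\nabla v)$, shift by $e^{-\lambda t}$, and run a weighted $L^2$ estimate on $(w-\delta)_+$ with the integrable weight $\psi\varphi_R^2$.

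Your remark about Proposition~\ref{localtheory52.8} is right. As quoted in the paper, it cannot be applied verbatim, because it requires the inequality a.e. in all of $Q_T$, while after the shift the zero-order term has the good sign only on $\{w>0\}$. The energy argument gets around this because it uses the differential inequality only on $\{w>\delta\}$.

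Two points to make explicit when you write it out.

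First, the order of limits. Young's inequality on the $\nabla\varphi_R$ term gives, with $z=(w-\delta)_+$, the bound $\frac{d}{dt}\int z^2\psi\varphi_R^2\le C(K)\int z^2\psi\varphi_R^2+C R^{-2}(\sup w_+)^2\|\psi\|_{L^1}$. This yields $\int z^2(t)\psi\varphi_R^2\le CR^{-2}te^{Ct}$. The additive error must stay inside the Gronwall step: apply Gronwall first, then let $R\to\infty$ (monotone convergence), and only then let $\delta\to0$.

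Second, the support claim. You should state that $(w-\delta)_+\varphi_R$ is supported away from $\partial\Omega$ on compact time intervals $[\tau,t]\subset(0,T)$. There it follows because the set $\{w\ge\delta\}\cap\bigl((\overline\Omega\cap\overline{B}_{2R})\times[\tau,t]\bigr)$ is compact and disjoint from $\partial\Omega\times[\tau,t]$. This is what makes the integration by parts purely interior, inside the region where $w_t,\nabla w,D^2w\in L^2$.
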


\section{Proof of Lemma~\ref{localtheoryBernchso}}\label{localtheorylemma3.1}

The proof follows the lines of the proof of \cite[Theorem~8]{chso2}. We will give the main ideas of the proof.
Here and in the rest of the proof, for given $R>0$, we denote
$$ 
B_R:=\{x\in\R^n;\  |x|<R\},\quad 
Q_{R,\tilde T}:=(B_R\cap \Omega) \times (0,\tilde T),\quad Q'_{R, \tilde T}:=(B_{R/2} \cap\Omega)\times (0, \tilde T).
$$
Let $f$ be a function, to be determined later, such that
\be{localtheoryfmap}
\begin{aligned}
&\hbox{$f\in C^1([0,\infty))\cap C^3(0,\infty)$,\quad $f'>0$ on $(0,\infty)$,}\\
&\hbox{$f$ maps $[0,\infty)$ onto $[-M,\infty)$.}
\end{aligned}
\ee
Let us put
$$v=f^{-1}(-u),\qquad w=|\nabla v|^2.$$

Let $\delta\in (0,1)$, $\tau\in(0,\tilde T)$ and let $\eta=\eta(x,t):=\eta_1(t)\tilde{\eta}(x)$ 
where  $\eta_1\in C^1([\tau/4,\tau])$ is a cut-off in time such that:
\be{localtheorydefeta1}
0\le \eta_1\le 1,\ \ \eta_1(\tau/4)=0, \ \ \eta_1\equiv 1 \ \hbox{in } [\tau/2,\tau),
\ \ |\eta_1'(t)|\le C\tau^{-1}\eta_1^\delta(t).
\ee
Let $\rho=2R/3$. We can select a (space) cut-off function $\tilde\eta\in C^2(\bar{B}_R)$,
with 
$$
0<\tilde\eta\le 1\ \hbox{for $|x|<\rho$},\quad \tilde\eta=1\ \hbox{for $|x|\le R/2$}, 
\quad \tilde\eta=0\ \hbox{for $|x|\ge\rho$,}
$$
 and
\be{localtheoryeta}
|\nabla \tilde\eta|\le C R^{-1}\tilde\eta^{\delta},\quad |\Delta \tilde\eta|+\tilde \eta^{-1}|\nabla \tilde\eta|^2\le C R^{-2}\tilde \eta^{\delta}
\ee
with $C=C(\delta)>0$.
 Put
$$z=\eta w$$
and set $\tilde Q:=(B_{3R/4}\cap\Omega)\times(\tau/4,\tau]$.
In the rest of the proof,
all computations will take place in $\tilde Q\cap\{z>0\}$, unless otherwise specified, and 
$C$ will denote a generic positive constant depending only on $p,n,\gamma$.
From \cite[Step 1 and 2 of Proof of Theorem~8]{chso2}, we have 
\be{localtheoryeqL}
\mathcal{L}z+\eta|D^2v|^2\leq \eta\, \mathcal{N}w +
C\eta^\gamma(\tau^{-1}+R^{-2}) w+ CR^{-1}\eta^\gamma\Bigl|p{f'}^{p-1}\,w^{(p+1)/2}-2\frac{f''}{
f'}w^{3/2}\Bigr|,
\ee
where 
$$\mathcal{L}z:= z_t-\Delta z+b\cdot\nabla z,
\quad b:=\Bigl[p{f'}^{p-1}\,w^\frac{p-2}{ 2}-2\frac{f''}{ f'}\Bigr]\nabla v,$$
\be{localtheorydefNw}
\mathcal{N}w:=
-2({f'}^{p-1})'\,w^{(p+2)/2}+2\displaystyle\Bigl(\frac{f''}{ f'}\Bigl)'\,w^2.
\ee

Following the same argument as in \cite[Step 3 of the proof of Theorem~8]{chso2}, it turns out that, with the (optimal) choice of $f$ given by:
\be{localtheorychoicef}
f(v)=h^{-1}(v)-M\quad \hbox{i.e.}\quad v=h(M-u),
\ee
where
\be{localtheorychoiceh}
h(s)=\int_0^s\frac{d\tau}{g(\tau)},\quad g(s)=R^{-1}s+Ks^\theta,\quad \hbox{with $\theta=1/p$ and $K=(R^{-2\theta}+\tau^{-\theta})$},
\ee 
 there exists $A=A(n,p)>~0$ such that 
$$
\mathcal{L} z\leq 0
\quad\hbox{ in $\{(x,t)\in \tilde Q;\ z(x,t)\geq A\}$}.
$$
 Note that $v\geq 0$ in $Q_{R,T}$ due to $u\leq M$ and $g>0$ in $[0,\infty)$.
On the parabolic boundary of $\tilde Q$, using our assumption \eqref{localtheoryhypoth} and the choice of auxiliary function \eqref{localtheorychoiceh}, we have 
$$
\begin{aligned}
z=&\eta|\nabla v|^2= \Big(\frac{|\nabla u|}{g(M-u)}\Big)^2\eta\le \Big(\frac{\tilde{M}(\tau/4)^{-\gamma_0}}{g(M)}\Big)^2=:M_2^2
\end{aligned}
$$
Therefore, the maximum principle yields $z\le \max\{A,M_2^2\}$ in $\tilde Q$.
Finally, using $z=\eta|\nabla v|^2$, \eqref{localtheorychoiceh}, \eqref{localtheorychoicef} and the definition of $\eta$, this implies
$$
\begin{aligned}
|\nabla u|&=|\nabla v|\, g(M-u)\le (\max\{A,M_2^2\})^{1/2} g(M-u)
\le C(n,p)\big(g(M)+\tilde{M}\tau^{-\gamma_0}\big)\frac{g(M-u)}{ g(M)}
\end{aligned}
$$ 
in $B_{R/2}\times[\tau/2,\tau]$.
For each $t\in (0,\tilde T]$, applying this 
with $\tau=t$, $R=1$ and using $u\ge 0$ and  \eqref{localtheorychoiceh} yields the conclusion.

\section{A priori estimates: proof of Proposition~\ref{localtheorythmancient3}}\label{localtheoryAPE}
 \label{localtheorySecPos}
 The proof of Proposition~\ref{localtheorythmancient3} is based on modifications of the arguments in \cite[Section~5]{chso2} written there for ancient solutions. We include it here  with the necessary changes. 
 To this end, since our proof is valid for any local in time solution, we consider the following problem
 \begin{equation}\label{localtheoryeqE1}
	\begin{cases}
	u_t-\Delta u=|\nabla u|^p,&(x,t)\in Q:=\R^n_+\times (0,T),\\
	\noalign{\vskip 1mm}
	u=0,&(x,t)\in \partial\R^n_+\times (0,T).\\
	\end{cases}
	\end{equation}
	
\subsection{Integral estimates}\label{localtheorySubSecInt}
 
We here provide preliminary integral estimates for $u$.
In this subsection, we fix $\alpha\in (1,p-1)$, and $C, C_i$ denote generic positive constants depending only on $n,p,\alpha$.

 \begin{lem}\label{localtheory1L3}
 Under assumption of Proposition~\ref{localtheorythmancient3}, for any $a\in \R^{n-1}$, we have 
\be{localtheoryupint}
\int_{B_3'(a)\times(0,3)} u(t)\vap \,dx\le C\big(1+(T-t)^{-\beta}\big),\quad 0<t<T.
\ee
 \end{lem}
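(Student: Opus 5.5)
The plan is a Kaplan-type argument for the localized mass $y(t):=\int_{B_3'(a)\times(0,3)}u(t)\vap\,dx$. I take $\vap$ to be a nonnegative weight supported in $\overline{B_3'(a)}\times[0,3]$ that vanishes on the lateral part of the boundary, is a power of a smooth cut-off (with exponent tied to the fixed $\alpha\in(1,p-1)$), and satisfies $\vap>0$ inside. I will derive a differential inequality $y'\ge c\,y^p-C$, valid on $(0,T)$. Integrating it backwards from $T$ then yields $y(t)\le C(1+(T-t)^{-1/(p-1)})$, which is \eqref{localtheoryupint} since $\beta=1/(p-1)$. All constants must depend only on $n,p,\alpha$, and in particular not on $a$ or $u$; translation invariance in $x'$ lets me take $a=0$.

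\textbf{Step 1: the differential identity.} I multiply the equation by $\vap$ and integrate over the box. Integrating $-\int\Delta u\,\vap$ by parts twice, the boundary terms on $\{x_n=0\}$ disappear because $u=0$ there, and the terms on the remaining faces vanish if $\vap$ and $\nabla\vap$ vanish there. This gives
$$y'(t)=\int|\nabla u|^p\vap+\int u\,\Delta\vap.$$
Rather than estimating the second term by $-C y$, I integrate by parts once only: $\int u\Delta\vap=-\int\nabla u\cdot\nabla\vap$. Young's inequality then gives
$$\int|\nabla u|\,|\nabla\vap|\le \tfrac12\int|\nabla u|^p\vap+C\int|\nabla\vap|^{p'}\vap^{1-p'},$$
where $p'=p/(p-1)$. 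The last integral is finite as soon as $\vap$ is a sufficiently high power of a smooth cut-off, and I will choose the exponent accordingly. Hence $y'\ge\frac12\int|\nabla u|^p\vap-C$. One point needs care: near $t=0$ or near $x_n=0$ the solution is only $C(\bar H\times[0,T))$. I will therefore carry out the computation on $\{x_n>\eta\}$, let $\eta\to0$, and handle the boundary terms at $x_n=\eta$ using the Bernstein estimate \eqref{localtheorygrabound}; alternatively, I will work with the integrated-in-time form, which avoids this issue.

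\textbf{Step 2: lower bound of the gradient term by the mass.} This is the key inequality. Since $u(x',0,t)=0$, for each $x'$ we have $u(x',x_n,t)\le\int_0^{x_n}|\partial_nu|\,ds$. Multiplying by $\vap$, integrating, and applying Hölder's inequality with weight $\vap$ gives
$$y\le C\Big(\int|\nabla u|^p\vap\Big)^{1/p}\Big(\int W\Big)^{1/p'},$$
where $W$ is an explicit weight built from $\vap$ and the primitive in $x_n$. For this to work, $\vap$ must be comparable, in $x_n$, to its own averages over $(x_n,3)$; this is where the specific power $\alpha$ enters, and the choice $\alpha<p-1$ should make the resulting weight integrable. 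Combining with Step 1 yields $y'\ge c\,y^p-C$.

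\textbf{Step 3: ODE conclusion, and the main obstacle.} Set $y_0:=(2C/c)^{1/p}$. If at some $t_0$ we have $y(t_0)>y_0$, then $y'\ge \frac c2y^p$ from $t_0$ on. This forces $y$ to blow up no later than $t_0+C'y(t_0)^{1-p}$. But $y$ is finite on $[0,T)$, since $u$ is continuous there and the weight has compact support in $x'$. Therefore $y(t_0)\le C(T-t_0)^{-\beta}$, and in all cases $y\le y_0+C(T-t)^{-\beta}$. I expect the main obstacle to be Step 2: constructing a weight $\vap$ that vanishes at the lateral boundary and at $x_n=3$ to high enough order for the Young term of Step 1 to be finite, yet still satisfies the one-dimensional Hardy/Poincaré-type inequality of Step 2 with constants independent of $a$. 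I would first try $\vap=\big(\zeta(x')\,\xi(x_n)\big)^{m}$ with $m$ large and $\xi(s)=s(3-s)$, and check the Hardy inequality with the power weight using $\alpha\in(1,p-1)$.
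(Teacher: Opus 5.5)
This is essentially the paper's proof: the paper takes $\vap=\psi(x')(3-x_n)x_n^{\alpha}$ on $\Omega_\eps=\Omega\cap\{x_n>\eps\}$, absorbs $\int\nabla u\cdot\nabla\vap$ by Young, and bounds $\int|\nabla u|^p\vap$ below by $y_\eps^p$ through a weighted Poincar\'e inequality (your Hardy bound via $u\le\int_0^{x_n}|\partial_n u|$ is an equivalent variant that avoids the paper's trace term $J_\eps$); it then kills the flux at $x_n=\eps$ with the Bernstein bound $|\nabla u|\lesssim x_n^{-1}+t^{-1/p}$ and concludes with the ODE blow-up lemma before letting $\eps\to0$, $t_1\to T$. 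The one thing to correct is your ``first try'' with $m$ large: the exponent of $x_n$ at the bottom must lie in $(1,p-1)$---above $1$ so that the flux $\lesssim\eps^{m-1}$ vanishes, and below $p-1$ so that your weight $W\sim x_n^{-m/(p-1)}$ is integrable (for $m>p-1$ the Hardy inequality fails, e.g.\ for $u=\min(x_n/\delta,1)$ with $\delta\to0$)---while the Young term only needs $m>\beta$, so there is no real conflict once the bottom exponent is chosen separately from those at $x_n=3$ and at the lateral boundary.
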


  In view of its proof we may assume $a=0$ without loss of generality and we define the cylinders
$$\Omega=B'_3\times(0,3),\qquad
\Omega_\eps=\Omega\cap\{x_n>\eps\},\quad \eps>0.$$
Since $u$ is assumed only continuous (and not $C^1$) up to $x_n=0$, we will argue on $\Omega_\eps$.
For the sake of passing to the limit $\eps\to 0$, we will make use of the Bernstein estimate in \eqref{localtheorygrabound}. 
Namely, since $p>2$, the latter guarantees that
\be{localtheoryBernSZ}
|\nabla u(x,t)|\le C(M_0+1)(x_n^{-1}+t^{-1/p}),\quad x\in B'_3\times(0,3),\ 0<t_0<t<t_1<T, 
\ee 
where $M_0:=\underset{  B'_4\times(0,4)\times(t_0,t_1)}\sup |u|<\infty.$
We also fix a cylindrical cut-off function $\varphi$ such that
\be{localtheorydefvarphi}
\begin{aligned}
&\varphi(x):=\psi(x')\chi(x_n),\quad\hbox{where $\chi (s)=(3-s)s^\alpha$ and}\\
 &\psi\in C^2(B_3'),\ 0\le\psi\le 1,\ |\nabla \psi|\le \psi^{1/p},
 \ \psi\equiv 1\mbox{ in $B_2'$},\ \psi=0 \mbox{ on $\partial B'_3$}.
 \end{aligned}
 \ee
With this cut-off,  we shall use the following properties, 
 including the weighted Poincar\'e type inequality \eqref{localtheoryeuphi2}.
 
 \begin{lem}\label{localtheorygenlem}
For all $w\in W^{1,p}(\Omega_\eps)\cap C(\overline{\Omega_\eps})$, we have
\be{localtheoryeuphi2a}
\Big|\int_{\Omega_\eps}\nabla w\nabla \vap \,dx\Big|\le \frac12 \int_{\Omega_\eps}|\nabla w|^p\vap \,dx +C
\ee
and
\be{localtheoryeuphi2}
\int_{\Omega_\eps} |w|^p \psi(x') \,dx\le C\Bigg(\int_{\Omega_\eps}|\nabla w|^p\vap \,dx+\int_{B'_3} |w(x',\eps)|^p \psi(x')\,dx'\Bigg).
\ee
\end{lem}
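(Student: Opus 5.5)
The plan is to treat both inequalities as consequences of the product structure $\varphi=\psi(x')\chi(x_n)$. Everything then reduces to checking that two one-dimensional weights built from $\chi(s)=(3-s)s^\alpha$ are integrable on $(0,3)$. This is exactly where the constraints $1<\alpha<p-1$ and $p>2$ (the standing assumption in this section) enter. The constants will be uniform in $\eps$, since all integrals are over $(\eps,3)\subset(0,3)$.

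For \eqref{localtheoryeuphi2a}, I apply Young's inequality pointwise in the form
$$
|\nabla w||\nabla\varphi|\le \tfrac12|\nabla w|^p\varphi + C\,\varphi^{-1/(p-1)}|\nabla\varphi|^{p'},\qquad p'=\tfrac{p}{p-1},
$$
so it suffices to show $\int_\Omega \varphi^{-1/(p-1)}|\nabla\varphi|^{p'}dx<\infty$. Since $|\nabla\varphi|\le \chi|\nabla\psi|+\psi|\chi'|$, I split into two terms.

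For the first term I use $|\nabla\psi|\le\psi^{1/p}$. This gives $\varphi^{-1/(p-1)}(\chi|\nabla\psi|)^{p'}\le \chi^{p'-\frac1{p-1}}\psi^{\frac{p'}{p}-\frac1{p-1}}=\chi$, because the exponent of $\psi$ vanishes, and $\chi$ is bounded.

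For the second term I get $\psi\,|\chi'|^{p'}\chi^{-1/(p-1)}$. Near $s=0$ this behaves like $s^{(\alpha(p-1)-p)/(p-1)}$, which is integrable iff $\alpha(p-1)>1$; this holds since $\alpha>1$ and $p>2$. Near $s=3$ it behaves like $(3-s)^{-1/(p-1)}$, which is integrable iff $p>2$. Elsewhere on $(0,3)$ it is bounded. Integrating over $B_3'\times(\eps,3)$ then gives \eqref{localtheoryeuphi2a}.

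For the weighted Poincaré inequality \eqref{localtheoryeuphi2}, I argue on vertical lines. Since $w\in W^{1,p}(\Omega_\eps)\cap C(\overline{\Omega_\eps})$, by the ACL characterization (or after a density argument), for a.e.\ $x'\in B_3'$ and all $s\in(\eps,3)$ we have
$$
w(x',s)=w(x',\eps)+\int_\eps^s\partial_n w(x',\sigma)\,d\sigma .
$$
Hence $|w(x',s)|^p\le 2^{p-1}\big(|w(x',\eps)|^p+(\int_\eps^3|\partial_nw|\,d\sigma)^p\big)$. By Hölder's inequality with weight $\chi$,
$$
\Big(\int_\eps^3|\partial_n w|\Big)^p\le \int_\eps^3|\partial_n w|^p\chi\,d\sigma\,\Big(\int_0^3\chi^{-1/(p-1)}d\sigma\Big)^{p-1}.
$$
The last integral is finite: near $0$ the integrand is $\sim s^{-\alpha/(p-1)}$, integrable because $\alpha<p-1$, and near $3$ it is $\sim(3-s)^{-1/(p-1)}$, integrable because $p>2$. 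I then multiply by $\psi(x')$ and integrate in $s\in(\eps,3)$, which costs at most a factor $3$, and in $x'\in B_3'$. Using $\psi\chi=\varphi$ and $|\partial_nw|\le|\nabla w|$, this yields \eqref{localtheoryeuphi2} with $C=C(n,p,\alpha)$.

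I do not expect any real obstacle here. The only delicate point is the bookkeeping of the exponents, where the conditions $\alpha>1/(p-1)$, $\alpha<p-1$ and $p>2$ each control one endpoint singularity. A secondary point is justifying the fundamental theorem of calculus along lines for $W^{1,p}$ functions, which is standard.
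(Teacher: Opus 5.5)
Your proof is correct. Young's inequality reduces the first bound to the finiteness of $\int_\Omega \varphi^{-1/(p-1)}|\nabla\varphi|^{p/(p-1)}\,dx$. You verify this properly: the power of $\psi$ cancels thanks to $|\nabla\psi|\le\psi^{1/p}$, and the endpoint conditions $\alpha(p-1)>1$ and $p>2$ are exactly what is needed. The weighted Poincar\'e bound then follows from the fundamental theorem of calculus along vertical segments and H\"older's inequality with weight $\chi$, using $\alpha<p-1$ and $p>2$.

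The paper does not prove this lemma itself; it refers to Lemma~5.1 of \cite{chso2}. Your argument is the natural proof of that statement, with constants depending only on $n,p,\alpha$ and uniform in $\eps$.
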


\begin{proof} 
We refer to \cite[Lemma~5.1]{chso2} for the proof of Lemma~\ref{localtheorygenlem}
\end{proof}

\begin{proof}[Proof of Lemma~\ref{localtheory1L3}]

Let $\eps\in(0,1)$ and set $y_\eps(t)=\int_{\Omega_\eps} u(t)\vap \,dx$. Multiplying the PDE in \eqref{localtheoryeqE1} by $\varphi$, integrating by parts over $\Omega_\eps$
and using $\varphi=0$ on $\{x_n>\eps\}\cap\partial\Omega_\eps$, we get
$$\begin{aligned}
y'_\eps(t)
&= \int_{\Omega_\eps}|\nabla u|^p\vap \,dx+\int_{\Omega_\eps} \vap\Delta u \,dx\\
& =\int_{\Omega_\eps} |\nabla u|^p\vap \,dx-\int_{\Omega_\eps}\nabla u\nabla \vap \,dx+\int_{\{x_n=\eps\}}  \vap u_\nu \,dx'.
 \end{aligned}$$
 Next set 
 \be{localtheorydefIJeps}
 I_\eps(t):=\int_{B'_3} \vap |u_\nu(x',\eps,t)| \,dx',\quad J_\eps(t):=C\int_{B'_3} |u(x',\eps,t)|^p \psi(x')\,dx'.
 \ee
 Using \eqref{localtheoryeuphi2a},  \eqref{localtheoryeuphi2}, $\varphi\le C\psi$ and H\"older's inequality, we obtain
$$\begin{aligned}
y'_\eps(t)\
&\ge \frac12 \int_{\Omega_\eps} |\nabla u|^p\vap \,dx-I_\eps -C\\
 &\ge C_1\int_{\Omega_\eps} |u|^p \vap \,dx-I_\eps-J_\eps-C
  \ge C_1 y_{\eps}^p -I_\eps-J_\eps-C.
 \end{aligned}$$
 Using \eqref{localtheoryBernSZ} and $0\le \varphi\le Cx_n^\alpha$, we deduce that
$|I_\eps| \le C(M_0+1) (\eps^{\alpha-1}+\eps^\alpha t^{-1/p})$ for all $t\in(0,T)$.
This along with the assumption $u\in C(\overline{\R^n_+}\times(0,T))$ and the boundary conditions
guarantees the existence of $\eps_0\in(0,1)$ (depending on $u,t_0,t_1$) such that 
\be{localtheoryboundIeps}
I_\eps(t)+J_\eps(t)\le 1\quad\hbox{ for all $t\in(t_0,t_1) \subset\subset(0,T)$ and $\eps\in(0,\eps_0)$.}
\ee
Consequently,
$$y'_\eps(t)\ge  C_1 y_{\eps}^p -C,\quad t_0<t<t_1,\ \eps\in(0,\eps_0).$$
It follows (cf., e.g.,~\cite[Lemma~9.1(i)]{chso2}) that
 $$y_\eps(t)\le C_2\big[1+(t_1-t)^{-\beta}\big],\quad t_0<t<t_1,\ \eps\in(0,\eps_0).$$
 Letting $\eps\to 0$ and then $t_1\to T$, $t_0\to0$, this implies \eqref{localtheoryupint}.

\end{proof}

\subsection{Pointwise estimates and proof of Proposition~\ref{localtheorythmancient3}}

 \begin{proof}[Proof of Proposition~\ref{localtheorythmancient3}]

 {\bf Step 1.}{\hskip 1mm}{\it Upper estimate for $x_n=1$.} 
 Fix $t\in(0,T)$, $x'\in \R^{n-1}$, $Z=(x',1)$ and set $\hat{B}=B_{1/2}(Z)$.
 Pick any $s\in (t,T)$ and set $\tau=s-t$.
By Theorem~\ref{localtheorythmLY}(ii), we have 
 $$u(Z,t)\le u(y,s)+C\big(\tau^{-\beta}+(1+t^{-1})\tau\big),\quad y\in \hat{B}.$$
Using \eqref{localtheoryupint} in Lemma~\ref{localtheory1L3}, we then obtain
$$\begin{aligned}
  u(Z,t)
  &\le \inf_{y\in \hat{B}} u(y,s)+C\big(\tau^{-\beta}+(1+t^{-1})\tau\big)\\
& \le C\Big(\|u(s)\|_{L^1(\hat{B})}+\tau^{-\beta}+(1+t^{-1})\tau\Big)
  \le C\big(1+(T-s)^{-\beta}+\tau^{-\beta}+(1+t^{-1})\tau\big)
   \end{aligned}$$
  Choose  $s=t+\min\{1,(T-t)/2\}$, and $T-s\ge (T-t)/2$ and $\tau^{-\beta}\le 1+2^\beta(T-t)^{-\beta}$.
We then obtain for $x'\in\R^{n-1},\ 0<t<T$:
 \be{localtheory1l11}
u(x',1,t)\le C(n,p)\big(1+(T-t)^{-\beta}+(1+t^{-1})\min\{1,T-t\}\big).
\ee
 
 \smallskip

 {\bf Step 3.} {\it Proof of \eqref{localtheorycontroludsqa} via scaling argument.}
For given $\lambda>0$, set $u_\lambda(x,t)=\lambda^{\beta-1}u(\lambda x , \lambda^2 t)$.
Since $u_\lambda$ is also a solution of \eqref{localtheoryeqE1} on $(0,\lambda^{-2}T)$ and $C$ in \eqref{localtheory1l11} depends only $n,p$,
we may apply these estimates to $u_\lambda$, which yields
 $$ 
 \begin{aligned}
 \lambda^{\beta-1}u(x',\lambda, t)&=u_\lambda(\lambda^{-1}x',1,\lambda^{-2}t)\le C\Big(1+\lambda^{2\beta} (T-t)^{-\beta}+(1+\lambda^{2} t^{-1})\min\{1,\lambda^{-2}(T-t)\}\Big)\\
 &\le C\Big(1+\lambda^{2\beta} (T-t)^{-\beta}+\min\{\lambda^{2} t^{-1},(T-t)/t\}\Big)
 \end{aligned}
 $$
for all $x'\in\R^{n-1}$, $0< t<T$, $\lambda>0$.
Taking $\lambda=x_n$, we obtain \eqref{localtheorycontroludsqa}.

\smallskip

 {\bf Step 4.} {\it Proof of the gradient estimate \eqref{localtheorycontroludsq1a}.}
Fix $(x,t)\in Q$ and let $R=x_n/2$.
Let $t_0=t/2$ and set $M(t_0)=\underset{B_R(x)\times[t_0,t]} \sup |u|$. 
 Applying the Bernstein type estimate \eqref{localtheorygrabound} to the solution $v(x,s)=u(x,s+t_0)$, defined in $B_R(x)\times(0,t-t_0]$, at $s=t-t_0$,
we have
$$\begin{aligned}
|\nabla u(x,t)|
&\le C\Big(\frac{M(t_0)}{R}+\Big(\frac{M(t_0)}{R^2}\Big)^{1/p}+\Big(\frac{M(t_0)}{t-t_0}\Big)^{1/p}\Big)\\
&\le C\Big(\frac{M(t_0)}{x_n}+x_n^{-\beta}+\Big(\frac{M(t_0)}{x_n}\Big)^{1/p}\Big(\frac{x_n}{t-t_0}\Big)^{1/p}\Big)\\
&\le C\Big(\frac{M(t_0)}{x_n}+x_n^{-\beta}+\Big(\frac{x_n}{t-t_0}\Big)^{\beta}\Big)\le C\Bigg(\frac{M(t_0)}{x_n}+x_n^{-\beta}+(x_nt^{-1})^{\beta}\Bigg)
\end{aligned}$$
where we used Young's inequality in the last estimate.
 By \eqref{localtheorycontroludsqa} we have 
$$
M(t_0)\le C(n,p)
x_n^{1-\beta} \left(1+x_n^{2\beta}(T-t)^{-\beta}+\min\{x_n^2t^{-1},(T-t)/t\}\right),
$$
hence \eqref{localtheorycontroludsq1a}.

 \end{proof}

\noindent{\bf Acknowlegement.} The author thanks Prof.~Philippe Souplet for useful suggestions during the preparation of this work.

\smallskip

\noindent{\bf Statements and Declarations.} The author states that there is no conflict of interest. 
This manuscript has no associated data.

\end{document}